\newcommand{\Z}{{\mathbb Z}}
\newcommand{\Q}{{\mathbb Q}}
\newcommand{\C}{{\mathbb C}}
\newcommand{\R}{{\mathbb R}}
\renewcommand{\P}{{\mathbb P}}
\renewcommand{\H}{{\mathbb H}}
\newcommand{\BB}{{\mathcal B}}
\newcommand{\OO}{{\mathcal O}}
\newcommand{\RR}{{\mathcal R}}
\newcommand{\TT}{{\mathcal T}}
\newcommand{\aaa}{{\bf a}}
\newcommand{\ddd}{{\rm d}}
\newcommand{\www}{\widetilde}
\newcommand{\oooo}{\overline}
\newcommand{\iiii}{\infty}
\newcommand{\mmm}{{\bf m}}
\newcommand{\ppp}{\partial}
\newcommand{\mmod}{{\rm mod}}
\DeclareMathOperator{\Aut}{Aut}
\DeclareMathOperator{\id}{id}
\DeclareMathOperator{\mult}{mult}
\DeclareMathOperator{\ord}{ord}
\DeclareMathOperator{\Ord}{Ord}
\DeclareMathOperator{\pr}{pr}
\DeclareMathOperator{\Stab}{Stab}
\DeclareMathOperator{\trace}{trace}
\begin{document}

\theoremstyle{plain}
\newtheorem{lemma}{Lemma}[section]
\newtheorem{definition/lemma}[lemma]{Definition/Lemma}
\newtheorem{theorem}[lemma]{Theorem}
\newtheorem{proposition}[lemma]{Proposition}
\newtheorem{corollary}[lemma]{Corollary}
\newtheorem{conjecture}[lemma]{Conjecture}
\newtheorem{conjectures}[lemma]{Conjectures}

\theoremstyle{definition}
\newtheorem{definition}[lemma]{Definition}
\newtheorem{withouttitle}[lemma]{}
\newtheorem{remark}[lemma]{Remark}
\newtheorem{remarks}[lemma]{Remarks}
\newtheorem{example}[lemma]{Example}
\newtheorem{examples}[lemma]{Examples}

\title
[marked singularities]
{$\mu$-constant monodromy groups\\ and marked singularities}

\author{Claus Hertling}

\address{Universit\"at Mannheim\\ 
Lehrstuhl f\"ur Mathematik VI\\
Seminargeb\"aude A 5, 6\\
68131 Mannheim, Germany}

\email{hertling@math.uni-mannheim.de}

\thanks{This work was supported by the DFG grant He2287/2-1 and the 
ANR grant ANR-08-BLAN-0317-01 (SEDIGA)}

\keywords{$\mu$-constant deformation, monodromy group,
marked singularity, moduli space, Torelli type problem, symmetries of singularities}

\subjclass[2000]{32S15, 32S40, 14D22, 58K70}

\date{August 2, 2011}

\begin{abstract}
$\mu$-constant families of holomorphic function germs with isolated singularities
are considered from a global perspective.
First, a monodromy group from all families which contain a fixed singularity
is studied. It consists of automorphisms of the Milnor lattice which respect not only
the intersection form, but also the Seifert form and the monodromy.
We conjecture that it contains all such automorphisms, modulo $\pm\id$.
Second, marked singularities are defined and global moduli spaces for right equivalence
classes of them are established. The conjecture on the group would imply that these
moduli spaces are connected. The relation with Torelli type problems is
discussed and a new global Torelli type conjecture for marked singularities is 
formulated. All conjectures are proved for the simple and $22$ of the $28$
exceptional singularities.
\end{abstract}

\maketitle

\tableofcontents

\setcounter{section}{0}

\section{Introduction}\label{c1}
\setcounter{equation}{0}

\noindent
This paper studies local objects from a global perspective.
The local objects are holomorphic function germs $f:(\C^{n+1},0)\to(\C,0)$ with an
isolated singularity at $0$ (short: singularity).
Two types of global objects for them are considered. The first are new monodromy groups,
the {\it $\mu$-constant monodromy groups}.
The second are moduli spaces for {\it marked singularities}. They are related.
And both are important for the study of period maps to spaces of Brieskorn lattices,
that is, regular singular TERP-structures or non-commutative Hodge structures.

The Milnor lattice of a singularity $f$ is $Ml(f):=H_n(f^{-1}(\tau),\Z)\cong \Z^\mu$
(reduced homology if $n=0$), here $\mu$ is the Milnor number, $\tau>0$, 
and $f^{-1}(\tau)$ is a regular fiber in a suitable representative of the function germ $f$.
It comes equipped with two pairings, the intersection form $I$ and the Seifert form $L$,
and with the monodromy $M_h\in\Aut(Ml(f),L,I)$. We put them together in one tuple
$ML(f):=(Ml(f),L,M_h,I)$. In fact, $L$ determines $M_h$ and $I$, so 
$$G_\Z(f):=\Aut(ML(f)) = \Aut(Ml(f),L).$$

We consider two kinds of $\mu$-constant families, either $C^\iiii$-families $F$ of singularities
over a base space $X$ which is a $C^\iiii$-manifold, or holomorphic families $F$ 
where the base space $X$ is a reduced complex space.
In either case the Milnor lattices $Ml(F_t)$, $t\in X$, of the members of the family $F$
glue to a local system of $\Z$-lattices of rank $\mu$ with Seifert form, monodromy
automorphism and intersection form. After fixing one point $t_0\in X$, the monodromy
group $G(F,X,t_0)$ of such a family is the image of the natural homomorphism
$\pi_1(X,t_0)\to G_\Z(F_{t_0})$ (definition \ref{t3.1} (a)).

For a singularity $f$, the {\it $\mu$-constant monodromy group} $G^{smar}(f)$ is the
subgroup of $G_\Z(f)$ generated by all monodromy groups of all $\mu$-constant
families which contain $f$ (definition \ref{t3.1} (b)). 
But using $k$-jets and the finite determinacy of singularities,
it is not hard to construct one global holomorphic $\mu$-constant family whose 
monodromy group is $G^{smar}(f)$ (lemma \ref{t3.5} (c)).

This global $\mu$-constant family was the starting point in \cite[theorem 13.15]{He6}
for the construction of a global moduli space $M_\mu(f_0)$ for right equivalence classes
of singularities in the $\mu$-homotopy class of a fixed singularity $f_0$. 
Here we will adapt this construction and establish a moduli space $M_\mu^{mar}(f_0)$
[respectively $M_\mu^{smar}(f_0)$] of [strongly] marked singularities (theorem \ref{t4.3}).

Fix one singularity $f_0$. A {\it [strongly] marked singularity} is a pair $(f,\pm \rho)$
[respectively $(f,\rho)$] where $f$ is a singularity in the $\mu$-homotopy class
of $f_0$ and $\rho:ML(f)\to ML(f_0)$ is an isomorphism.
Here $\pm \rho$ means the set $\{\rho,-\rho\}$, so neither $\rho$ nor $-\rho$ is 
distinguished. Two [strongly] marked singularities $(f_1,\pm\rho_1)$ and $(f_2,\pm\rho_2)$
[$(f_1,\rho_1)$ and $(f_2,\rho_2)$] are {\it right equivalent} if a 
coordinate change $\varphi:(\C^{n+1},0)\to (\C^{n+1},0)$ exists with
$f_1=f_2\circ \varphi$ and $\rho_1=\pm\rho_2\circ\varphi_{hom}$
[respectively $\rho_1=\rho_2\circ\varphi_{hom}$], here $\varphi_{hom}:ML(f_1)\to ML(f_2)$
is the induced isomorphism.

A surprising fact is that the strongly marked singularities $(f,\rho)$ and $(f,-\rho)$
are right equivalent if and only if $\mult f=2$
(theorem \ref{t3.3} (e) and (g)).
This leads to potential problems for the space $M_\mu^{smar}$: If there would exist
a $\mu$-homotopy class which contains singularities with multiplicity $2$ and 
singularities with multiplicity $\geq 3$ (which I don't believe), then its 
moduli space $M_\mu^{smar}$ of strongly marked singularities would not be Hausdorff
(theorem \ref{t4.3} (e)).
The moduli space $M_\mu^{mar}$ is not sensitive to this, it exists always
as an analytic geometric quotient.

This is one reason why we consider not only strongly marked singularities, but also
marked singularities. The other is that the period map $M_\mu^{smar}(f_0)\to D_{BL}(f_0)$
to a classifying space for Brieskorn lattices factors through $M_\mu^{mar}(f_0)$.

Locally $M_\mu^{mar}$ and $M_\mu^{smar}$ (if it is Hausdorff) are isomorphic to 
the $\mu$-constant stratum of a singularity (theorem \ref{t4.3} (b)).
The group $G_\Z(f_0)$ acts on $M_\mu^{mar}$ and $M_\mu^{smar}$ by
$\psi:[(f,\pm\rho)]\mapsto [(f,\pm\psi\circ\rho)]$ 
[respectively $\psi:[(f,\rho)]\mapsto[(f,\psi\circ\rho)]$].
The action is properly discontinuous (on $M_\mu^{smar}$ if it is Hausdorff),
the quotient is $M_\mu$ (theorem \ref{t4.3} (d)).

The $\mu$-constant monodromy group $G^{smar}(f_0)$ turns out to be the subgroup of $G_\Z(f_0)$
which acts on that topological component $(M_\mu^{smar})^0$ which contains $[(f_0,\id)]$
(if $M_\mu^{smar}$ is Hausdorff), and likewise, 
the group 
$$G^{mar}(f_0):=\{\pm \psi\, |\, \psi\in G^{smar}(f_0)\}$$
is the subgroup of $G_\Z(f_0)$ which acts on the component $(M_\mu^{mar})^0$
which contains $[(f_0,\pm\id)]$ (theorem \ref{t4.4} (a) and (b)).
Especially, there is a 1--1 correspondence
\begin{eqnarray*}
G_\Z(f_0)/G^{mar}(f_0)&\to & \{\textup{topological components of }M_\mu^{mar}\}\\
\psi \cdot G^{mar}(f_0)&\mapsto &\psi ((M_\mu^{mar})^0)
\end{eqnarray*}
[and similarly for $G^{smar}(f_0)$ and $M_\mu^{smar}$ if it is Hausdorff].

\begin{conjecture}(=\ref{t3.2})
(a) $G^{mar}(f_0)=G_\Z(f_0)$, equivalent: $M_\mu^{mar}$ is connected.

(b) If all singularities in the $\mu$-homotopy class of $f_0$ have multiplicity $\geq 3$
then $G^{mar}(f_0)=G^{smar}(f_0)\times\{\pm\id\}$, equivalent:
$-\id\notin G^{smar}(f_0)$.
\end{conjecture}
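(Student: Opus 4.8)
The plan is to reduce both parts to explicit facts about the two groups $G^{smar}(f_0)$ and $G_\Z(f_0)$, to compute the latter by hand, to bound the former from below by geometric constructions, and to compare the two; this strategy will settle the conjecture for the simple singularities and for $22$ of the $28$ exceptional ones, but not in general.

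First I would rephrase the statement. By the dictionaries of \ref{t4.3} and \ref{t4.4}, part (a) is equivalent to the assertion that for every $g\in G_\Z(f_0)$ at least one of $g$ and $-g$ lies in $G^{smar}(f_0)$. And part (b), under its multiplicity hypothesis (which in particular forces $\mult f_0\geq 3$), is equivalent to $-\id\notin G^{smar}(f_0)$: one always has $G^{mar}(f_0)=G^{smar}(f_0)\cup\bigl(-G^{smar}(f_0)\bigr)$, this union is disjoint precisely when $-\id\notin G^{smar}(f_0)$, and in that case it is the internal direct product $G^{smar}(f_0)\times\{\pm\id\}$ since $-\id$ is central. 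So everything comes down to pinning down the two groups.

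To compute $G_\Z(f_0)=\Aut(Ml(f_0),L)$ I would fix a distinguished basis of vanishing cycles, write out the Seifert matrix and hence $M_h$, and decompose $Ml(f_0)\otimes\C$ into $M_h$-eigenspaces (in the quasihomogeneous case $M_h$ is semisimple with roots of unity as eigenvalues). An element of $G_\Z(f_0)$ commutes with $M_h$ and preserves $L$; carrying this out in each eigenspace, on which $L$ induces a Hermitian form, exhibits $G_\Z(f_0)$ as an explicit finite group --- for the simple singularities a finite group built from $\langle M_h\rangle$, $-\id$, and graph automorphisms, and for the exceptional unimodal and bimodal singularities a finite group to be determined case by case. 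To bound $G^{smar}(f_0)$ from below I would use three sources of elements. First, germ symmetries: for $\varphi\in\Aut\bigl((\C^{n+1},0),f_0\bigr)$, choosing a path $\psi_s$ of coordinate changes from $\id$ to $\varphi$ (the group of coordinate changes is connected), the family $F_s=f_0\circ\psi_s$ is a $\mu$-constant $C^\infty$ family over $S^1$ with monodromy $\varphi_{hom}^{\pm1}$, so that $\Aut(f_0)_{hom}\subseteq G^{smar}(f_0)$. Second, the classical monodromy $M_h$, realised by a one-parameter $\mu$-constant deformation. Third, for singularities with moduli, the global $\mu$-constant family of \ref{t3.5}(c) over a tractable modulus space, from which one reads off the monodromy of loops around the finitely many special values of the moduli; for the exceptional singularities these loop monodromies already generate $G^{smar}(f_0)$. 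For the simple singularities $M_\mu(f_0)$ is a single point, hence so is $(M_\mu^{smar})^0$, and by \ref{t4.4} this already forces $G^{smar}(f_0)=\Aut(f_0)_{hom}$ exactly.

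Finally, for each singularity on the list, I would compare the two computations: if the $\pm$-closure of the explicitly produced subgroup equals $G_\Z(f_0)$ then (a) follows, because $G^{mar}(f_0)$ both contains that $\pm$-closure and is contained in $G_\Z(f_0)$; and if moreover $-\id$ lies in neither the produced subgroup nor $G^{smar}(f_0)$, then (b) follows. For the simple singularities the second point is automatic once $G^{smar}(f_0)=\Aut(f_0)_{hom}$, since by \ref{t3.3}(e),(g) one has $-\id\in\Aut(f_0)_{hom}$ if and only if $\mult f_0=2$, which the hypothesis of (b) excludes. \emph{The hard part will be this comparison.} Showing that the geometrically accessible automorphisms already exhaust $G_\Z(f_0)$ modulo $\pm\id$ is exactly the point at which the six remaining exceptional (bimodal) singularities are not yet settled: there the known $\mu$-constant families do not visibly suffice, and it is unclear whether further families are needed or whether $G^{smar}(f_0)$ is genuinely smaller. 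And for an arbitrary $f_0$ there is neither a structural description of $G_\Z(f_0)$, nor a recipe realising a prescribed automorphism by a $\mu$-constant family, nor an intrinsic (orientation- or Hodge-theoretic) reason ruling out $-\id$ as a $\mu$-constant monodromy when $\mult f_0\geq 3$; supplying these is what a proof of the full conjecture would require.
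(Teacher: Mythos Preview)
Your outline correctly recognises that the statement is a conjecture, proved in the paper only for the cases listed in Theorem~\ref{t3.3}(a), and your high-level strategy---compute $G_\Z$, bound $G^{smar}$ from below, compare---matches the paper's. But the execution in section~\ref{c8} is sharper than what you sketch, in two respects.

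First, the computation of $G_\Z$ does not proceed via ``Hermitian forms on eigenspaces'' case by case. The key device is Lemma~\ref{t8.2}: if every eigenvalue of $M_h$ is simple, the orders of the eigenvalues are linked by a suitable chain of prime-power divisions, and $Ml(f)$ is a cyclic $\Z[M_h]$-module (Orlik's conjecture, known here), then a number-theoretic argument about cyclotomic units (Lemma~\ref{t8.1}) forces $\Aut(Ml,M_h,I)=\{\pm M_h^k\mid k\in\Z\}$. This covers $A_\mu$, $D_{2k+1}$, $E_\mu$ and exactly the $22$ exceptional families with simple spectrum; the six excluded families ($Z_{12},Q_{12},U_{12},Z_{18},Q_{16},U_{16}$) are precisely those with a repeated eigenvalue. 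For $D_{2k}$ the eigenvalue $-1$ is double and a separate hands-on argument on the rank-$2$ sublattice $\ker(M_h+\id)$ is needed (Theorem~\ref{t8.4}). Second, once $G_\Z=\{\pm M_h^k\}$, the lower bound is immediate: $M_h\in G^{smar}_\RR(f)$ for quasihomogeneous $f$ by Theorem~\ref{t3.3}(c), so $G^{mar}=G_\Z$ without invoking any loops around special moduli---indeed the paper's $M_\mu^{mar}\cong\C^{\mmod(f)}$ has none. For part~(b) in the exceptional cases your route via Theorem~\ref{t3.3}(e),(g) does not close, because with positive modality $G^{smar}$ is a priori larger than $G^{smar}_\RR$; the paper instead argues topologically: $M_\mu^{mar}\cong\C^{\mmod(f)}$ is simply connected, so the double cover $M_\mu^{smar}\to M_\mu^{mar}$ of Theorem~\ref{t4.4}(e) is trivial, whence $-\id\notin G^{smar}$.
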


Part (a) is a fragile conjecture. If it is true, it points at hidden properties which
distinguish the lattice $Ml(f_0)$ from other monodromy invariant lattices in $
Ml(f_0)\otimes_\Z\Q$. For example, it implies that any basis which has the same 
Coxeter-Dynkin diagram as a distinguished basis is also distinguished (remark \ref{t3.4}).

Part (b) leads to the question how in cases where it is true, the index $2$ subgroup
$G^{smar}(f_0)\subset G^{mar}(f_0)$ can be described a priori.
Both conjectures are proved in section \ref{c8} for the simple and $22$ of the
$28$ families of exceptional singularities. In another paper they will be proved for the 
remaining 6 families of exceptional singularities, for the 
simple-elliptic and the hyperbolic singularities.

In \cite{He4} a classifying space $D_{BL}(f_0)$ for (candidates of) Brieskorn lattices
was constructed. It is a complex manifold, and $G_\Z(f_0)$ acts properly discontinuously
on it. Now one obtains a holomorphic period map
$$BL:M_\mu^{mar}(f_0)\to D_{BL}(f_0),$$
which is $G_\Z(f_0)$-equivariant.
An infinitesimal Torelly type result is that it is an immersion (\cite[theorem 12.8]{He6}).
The following is a global Torelli type conjecture for marked singularities.

\begin{conjecture}(=\ref{t5.3})
$BL:M_\mu^{mar}(f_0)\to D_{BL}(f_0)$ is injective.
\end{conjecture}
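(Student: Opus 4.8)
The plan to prove Conjecture \ref{t5.3} is as follows. The map $BL$ is a $G_\Z(f_0)$-equivariant immersion (\cite[theorem 12.8]{He6}), so its fibres are discrete; global injectivity will be deduced from a global Torelli statement for Brieskorn lattices together with a statement about symmetry groups. Suppose $BL([(f_1,\pm\rho_1)])=BL([(f_2,\pm\rho_2)])$. Each $\rho_i:ML(f_i)\to ML(f_0)$ respects $L$, hence $M_h$ and $I$, and therefore identifies the Gauss--Manin system of $f_i$ with the one attached to $ML(f_0)$; the hypothesis then says exactly that $\sigma:=\rho_2^{-1}\circ\rho_1:ML(f_1)\to ML(f_2)$ carries the Brieskorn lattice of $f_1$ onto that of $f_2$ (this is insensitive to the sign ambiguity in $\sigma$, since a homothety preserves every subspace --- which is also the reason $BL$ factors through marking rather than strong marking). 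It therefore suffices to establish the following two assertions. \textup{(A)}: two singularities in the $\mu$-homotopy class of $f_0$ with isomorphic Brieskorn lattices (as submodules of their Gauss--Manin systems) are right equivalent. \textup{(B)}: for any such singularity $g$, every $\psi\in G_\Z(g)$ fixing the Brieskorn lattice of $g$ has the form $\pm\varphi_{hom}$ for some symmetry $\varphi$ of $g$. Indeed, from \textup{(A)} we get $f_1=f_2\circ\varphi_0$; the automorphism $\sigma\circ(\varphi_0)_{hom}^{-1}\in G_\Z(f_2)$ fixes the Brieskorn lattice of $f_2$, hence by \textup{(B)} equals $\pm(\varphi_1)_{hom}$ for a symmetry $\varphi_1$ of $f_2$; and then $\varphi:=\varphi_1\circ\varphi_0$ satisfies $f_1=f_2\circ\varphi$ and $\varphi_{hom}=\pm\sigma=\pm\rho_2^{-1}\circ\rho_1$, i.e.\ $(f_1,\pm\rho_1)$ and $(f_2,\pm\rho_2)$ are right equivalent.

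Assertion \textup{(B)}, unwound through the description of $M_\mu^{mar}(f_0)$ in theorem \ref{t4.3}, is equivalent to: the stabilizer in $G_\Z(f_0)$ of a point of $D_{BL}(f_0)$ in the image of $BL$ agrees with the stabilizer of the corresponding point of $M_\mu^{mar}(f_0)$; one inclusion is automatic by equivariance. To prove it I would use that $BL$ is an immersion and that $M_\mu^{mar}(f_0)$ is locally the $\mu$-constant stratum (theorem \ref{t4.3}(b)), so that the stabilizer of a Brieskorn-lattice point is a finite group acting with discrete orbits on the fibre; and then that an automorphism of the Brieskorn lattice preserves all of its numerical invariants --- spectrum, spectral pairs, and the polarized mixed Hodge structure on the vanishing cohomology --- so that matching these against an explicit generating set of $G_\Z(f_0)$ forces $\psi$ into the image of $\Aut(g)$, up to $\pm\id$. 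I expect this to be long but essentially mechanical, parallel to the ``symmetries of singularities'' analysis behind theorem \ref{t3.3}.

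The genuine difficulty is \textup{(A)}: there is no known uniform proof that the Brieskorn lattice, viewed inside the Gauss--Manin system, determines an isolated hypersurface singularity up to right equivalence, and that is precisely why \ref{t5.3} is only a conjecture. For the simple singularities \textup{(A)} is vacuous, the $\mu$-constant stratum being a point, so only \textup{(B)} remains, and there it follows at once from the classical description of $G_\Z(f_0)$ and the fact that every element of it is realized by a symmetry. For the $22$ exceptional families the $\mu$-constant stratum is positive-dimensional but small and has an explicit normal form; there I would prove \textup{(A)} by reconstructing $f$ up to right equivalence from its Brieskorn lattice --- reading the (appropriately twisted) Milnor algebra off $H''_0/tH''_0$, determining the quasi-homogeneous principal part from the $V$-filtration and the spectrum, and recovering the remaining modulus by comparing the finitely many resulting filtered $\C\{t\}$-structures, the ambiguity being finite-dimensional and governed by a unipotent group together with the immersion property along the family. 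The main obstacle is to carry such a reconstruction through without an explicit description of the members of an arbitrary $\mu$-constant family, which with present techniques I do not see how to do.
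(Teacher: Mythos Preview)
Your reduction of the conjecture to the two assertions (A) and (B) is exactly the content of Lemma~\ref{t5.5}: (A) is Conjecture~\ref{t5.4} and (B) is Conjecture~\ref{t5.1}. You also correctly recognize that the statement is a conjecture with no general proof available; the paper does not claim one either, only the special cases listed in Theorem~\ref{t3.3}(a).

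Where your proposal diverges from the paper is in the treatment of those special cases. You plan to verify (A) and (B) separately --- (B) by arguing that a Brieskorn-lattice-preserving automorphism must match an explicit generating set of $G_\Z$, and (A) by a pointwise reconstruction of $f$ from its Brieskorn lattice via the Milnor algebra, $V$-filtration, and spectrum. The paper in section~\ref{c8} takes a more structural route: it first computes $G_\Z(f_0)$ in full, obtaining $G_\Z=\{\pm M_h^k\,|\,k\in\Z\}$ from a number-theoretic lemma (Lemma~\ref{t8.2}) about cyclic lattices with simple monodromy eigenvalues. Since all these elements are realized by symmetries of the quasihomogeneous singularity, this yields $G^{mar}(f_0)=G_\Z(f_0)$ (Conjecture~\ref{t3.2}(a)) and hence that $M_\mu^{mar}(f_0)$ is connected; a $\C^*$-action argument then identifies $M_\mu^{mar}(f_0)$ globally with the explicit normal-form family $X\cong\C^{\mmod(f)}$; finally the period map $X\to D_{BL}$ is quoted from \cite{He1} as an isomorphism. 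This single stroke gives injectivity of $BL$ directly and delivers (A), (B), and Conjecture~\ref{t3.2} simultaneously. Your mechanism for (B) is vague precisely at the point the paper makes sharp --- the complete determination of $G_\Z$ --- and your reconstruction sketch for (A) is essentially reinventing what \cite{He1} already provides, while not yielding the global identification of $M_\mu^{mar}$ that the paper's approach produces as a byproduct.
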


It is equivalent to two Torelli type conjectures which I had proposed earlier.
One is that the period map after taking the quotient by $G_\Z(f_0)$ 
$$LBL:M_\mu^{mar}(f_0)/G_\Z(f_0)=M_\mu(f_0)\to D_{BL}(f_0)/G_\Z(f_0)$$
is injective. It says that the right equivalence class of a singularity is determined
by its Brieskorn lattice (up to isomorphism). I worked on it in \cite{He1}--\cite{He6}.
The other is that for any $[(f,\pm \rho)]\in M_\mu^{mar}(f_0)$
$$\Stab_{G_\Z(f_0)}([(f,\pm\rho)]) = \Stab_{G_\Z(f_0)}(BL([(f,\pm\rho)])),$$
this is \cite[conjecture 13.12]{He6}. Obvious is only $\subset$ and that both groups
are finite, because $G_\Z(f_0)$ acts properly discontinuously on $M_\mu^{mar}(f_0)$
and $D_{BL}(f_0)$.

Nevertheless, the isotropy group $\Stab_{G_\Z(f_0)}([(f,\pm\rho)])$ and also the subgroup
$\Stab_{G_\Z(f_0)}([(f,\rho)]) $ are much better understood than the monodromy groups
$G^{mar}(f_0)$ and $G^{smar}(f_0)$. The isotropy groups had been studied from the 
point of view of symmetries of singularities in \cite[13.1 and 13.2]{He6}.
Section \ref{c6} reviews the results.

The isotropy group $\Stab_{G_\Z(f_0)}([(f,\rho)])$ can also be seen as a $\mu$-constant
monodromy group, but for $\mu$-constant families where all members are right
equivalent to $f$ (theorem \ref{t4.4} (d)).

This paper deals almost exclusively with $\mu$-constant families of singularities.
Semiuniversal unfoldings are only used in the discussion of symmetries of singularities
and in the construction of $M_\mu^{mar}$. But later I hope to extend $M_\mu^{mar}$
to a manifold of dimension $\mu$ which is locally a semiuniversal unfolding
and which allows to consider distinguished bases and Stokes data of 
deformations which are not $\mu$-constant from a global perspective.

Section \ref{c2} reviews the topology of singularities.
Section \ref{c3} defines and studies the $\mu$-constant monodromy groups.
Section \ref{c4} establishes the moduli spaces for [strongly] marked singularities,
though the main proof is given in section \ref{c7}. Section \ref{c5} discusses
the period maps $BL$ and $LBL$. Section \ref{c6} reviews the symmetries of 
singularities. Section \ref{c8} proves all conjectures for the simple and $22$
of the $28$ exceptional singularities.

I thank the organizers for the workshop on the geometry and physics of the
Landau-Ginzburg model in the summer 2010 in Grenoble.
I thank Martin Guest and the Tokyo Metropolitan University for hospitality
in the winter 2010/2011.

\section{Review on the topology of isolated hypersurface singularities}\label{c2}
\setcounter{equation}{0}

\noindent
First, we recall some classical facts and fix some notations.
An {\it isolated hypersurface singularity} (short: {\it singularity})
is a holomorphic function germ $f:(\C^{n+1},0)\to (\C,0)$ with an isolated 
singularity at $0$. Its {\it Milnor number} 
$$\mu:=\dim\OO_{\C^{n+1},0}/(\frac{\ppp f}{\ppp x_i})$$ 
is finite.
A {\it Milnor fibration} for $f$ is constructed as follows \cite{Mi}.
Choose $\varepsilon>0$ such that $f$ is defined on the ball
$B^{2n+2}_{\varepsilon}:=\{x\in \C^{n+1}\, |\, |x|<\varepsilon\}$
and $f^{-1}(0)$ is transversal to $\ppp B^{2n+2}_{\www\varepsilon}$ 
for all $\www\varepsilon\leq \varepsilon$. 
Choose $\delta>0$ such that $f^{-1}(\tau)$ is transversal to
$\partial B^{2n+2}_\varepsilon$ for all 
$\tau\in T_\delta:=\{\tau\in\C\, |\, |\tau|<\delta\}$.
Define $T_\delta':=T_\delta-\{0\}$,  
$Y(\varepsilon,\delta):=B^{2n+2}_\varepsilon\cap f^{-1}(T_\delta)$
and $Y'(\varepsilon,\delta):=Y(\varepsilon,\delta)-f^{-1}(0)$.
Then $f:Y'(\varepsilon,\delta)\to T_\delta'$ is a locally trivial 
$C^\infty$-fibration, the {\it Milnor fibration}, and each fiber has the
homotopy type of a bouquet of $\mu$ $n$-spheres \cite{Mi}.

Therefore the (reduced for $n=0$) middle homology groups are {}\\{}
$H_n^{(red)}(f^{-1}(\tau),\Z) \cong \Z^\mu$ for $\tau\in T_\delta'$.
Each comes equipped with an intersection form $I$, which is a datum of one fiber,
a monodromy $M_h$ and a Seifert form $L$, which come from the Milnor fibration,
see \cite[I.2.3]{AGV2} for their definitions (for the Seifert form, there are several
conventions in the literature, we follow \cite{AGV2}). 
$M_h$ is a quasiunipotent automorphism, $I$ and $L$ are bilinear forms with values in $\Z$,
$I$ is $(-1)^n$-symmetric and $L$ is unimodular. $
L$ determines $M_h$ and $I$ because of the formulas
\cite[I.2.3]{AGV2}
$$L(M_ha,b)=(-1)^{n+1}L(b,a),\quad
I(a,b)=-L(a,b)+(-1)^{n+1}L(b,a).$$
If $f:Y'(\www\varepsilon,\www\delta)\to T_{\www\delta}'$ is a Milnor fibration 
with $\www\varepsilon<\varepsilon$ and $\www\delta<\delta$ then the inclusion
$$Y'(\www\varepsilon,\www\delta)\cap f^{-1}(\ppp T_{\www\delta}')\hookrightarrow
Y'(\varepsilon,\delta)\cap f^{-1}(\ppp T_{\www\delta}')$$
is a fiber homotopy equivalence between the restrictions to 
$\ppp T_{\www\delta}'$ of the new and the old Milnor fibration 
(\cite{Mi} or \cite[Lemma 2.2]{LR}).
Therefore the Milnor lattices $H_n(f^{-1}(\tau),\Z)$ for all Milnor fibrations
and all $\tau\in\R_{>0}\cap T_{\delta}'$ are canonically isomorphic,
and the isomorphisms respect $M_h$, $I$ and $L$. These lattices are identified
and called $Ml(f)$, the tuple $(Ml(f),L,M_h,I)$ is called $ML(f)$
(for MiLnor and Lattice and $L=$ Seifert form). 
Remark that $\Aut(ML(f))=\Aut(Ml(f),L)$ because $L$ determines $M_h$ and $I$.
This group is called $G_\Z(f)$.

The function germ $f(x_0,...,x_n)+x_{n+1}^2\in \OO_{\C^{n+2},0}$ is called 
{\it stabilization} or {\it suspension} of $f$. There is a canonical
isomorphism $Ml(f)\otimes Ml(x_{n+1}^2)\to Ml(f+x_{n+1}^2)$ \cite[I.2.7]{AGV2}.
As there are only two isomorphisms $Ml(x_{n+1}^2)\to\Z$, and they differ by a sign,
there are two equally canonical isomorphisms $Ml(f)\to Ml(f+x_{n+1}^2)$,
and they differ just by a sign. 
Therefore automorphisms and bilinear forms on $Ml(f)$ can be identified with 
automorphisms and bilinear forms on $Ml(f+x_{n+1}^2)$. In this sense
$$L(f+x_{n+1}^2) = (-1)^n\cdot L(f)\qquad\textup{and}\qquad 
M_h(f+x_{n+1}^2)= - M_h(f)$$
\cite[I.2.7]{AGV2}, and $G_\Z(f+x_{n+1}^2)=G_\Z(f)$.

The group of biholomorphic map germs $\varphi:(\C^{n+1},0)\to(\C^{n+1},0)$
is called $\RR$, its elements are called {\it coordinate changes}.
Two singularities $f$ and $g\in \OO_{\C^{n+1},0}$ are {\it right equivalent},
if $f=g\circ \varphi$ for some $\varphi\in\RR$, notation: $f\sim_\RR g$.
In that case $\varphi$ induces an isomorphism
$$\varphi_{hom}: ML(f)\to ML(g).$$

The {\it multiplicity} of $f$ is $\mult f:=\max(k\, |\, f\in \mmm^k)$,
here $\mmm\subset\OO_{\C^{n+1},0}$ is the maximal ideal.
The {\it splitting lemma} says for isolated hypersurface singularities
$f,f_1,f_2\in\mmm^2_{\C^{n+1},0}$ \cite{AGV1}
\begin{eqnarray*}
\mult f=2&\iff& f\sim_\RR g(x_0,...,x_{n-1})+x_n^2\quad\textup{for some}
\quad g\in \mmm^2_{\C^n,0}\\
f_1\sim_\RR f_2 &\iff& f_1+x_{n+1}^2\sim_\RR f_2+x_{n+1}^2
\end{eqnarray*}
(in the first equivalence $\Leftarrow$ is trivial, in the second $\Rightarrow$).

The next definition and the theorem after it are preparations for section \ref{c3}.

\begin{definition}\label{t2.1}
(a) A {\rm $C^\infty$ $\mu$-constant family} consists of a number $\mu\in \Z_{\geq 1}$,
a connected $C^\iiii$-manifold $X$, possibly with boundary (e.g. $X=[0,1]$),
an open neighborhood $Y\subset \C^{n+1}\times X$ of $\{0\}\times X$ 
and a $C^\iiii$-function $F:Y\to\C$, such that $F_t:=F_{|Y\cap \C^{n+1}\times\{t\}}$
for any $t\in X$ is holomorphic and has an isolated singularity with Milnor number 
$\mu$ at 0.

(b) A {\rm holomorphic $\mu$-constant family} consists of a number $\mu\in \Z_{\geq 1}$,
a connected reduced complex space $X$,
an open neighborhood $Y\subset \C^{n+1}\times X$ of $\{0\}\times X$ 
and a holomorphic function $F:Y\to\C$, such that $F_t:=F_{|Y\cap \C^{n+1}\times\{t\}}$
for any $t\in X$ has an isolated singularity with Milnor number 
$\mu$ at 0.

(c) The {\rm $\mu$-homotopy class} of $f$ consists of all singularities $g$ such that
a $C^\iiii$ $\mu$-constant family exists which contains $f$ and $g$.
\end{definition}

\begin{theorem}\label{t2.2}
In both cases ((a) and (b) in definition \ref{t2.1}) 
the Milnor lattices $Ml(F_t)$ and the tuples $ML(F_t)$ for $t\in X$
are locally canonically isomorphic. They glue to a local system $Ml(F)$ of 
free $\Z$-modules of rank $\mu$ on $X$ with a flat unimodular pairing $L$,
a flat automorphism $M_h$ and a flat intersection form $I$.
The tuple $(Ml(F),L,M_h,I)$ is called $ML(F)$.
\end{theorem}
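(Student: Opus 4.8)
The plan is to reduce the global statement to the local situation at each point of the base $X$, and then to patch. First I would fix a point $t_0\in X$. Because each $F_{t_0}$ has an isolated singularity with Milnor number $\mu$ at $0$, one can choose a Milnor radius $\varepsilon_0>0$ and a disc radius $\delta_0>0$ as in the construction recalled in Section \ref{c2}, giving a Milnor fibration $F_{t_0}:Y'(\varepsilon_0,\delta_0)\to T'_{\delta_0}$. The key local input, due to the $\mu$-constancy and the openness of the transversality conditions, is that these same radii work for all $t$ in a small enough neighborhood $U\subset X$ of $t_0$: the maps $F_t$ stay transverse to $\ppp B^{2n+2}_\varepsilon$ on the relevant fibers and the critical locus of $F$ near $\{0\}\times U$ is exactly $\{0\}\times U$ (this is where $\mu$ being constant is essential, via upper semicontinuity of the Milnor number and the fact that the total Milnor number in a small ball is preserved). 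Then $F:Y'_U\to T'_{\delta_0}\times U$ is a locally trivial $C^\infty$-fibration over $T'_{\delta_0}\times U$, and restricting to $\R_{>0}\cap T'_{\delta_0}$ one gets a fibration over $(\R_{>0}\cap T'_{\delta_0})\times U$ whose fiber homology is $H_n$ of the individual Milnor fibers. Since the base is connected (shrink $U$ to be contractible), Ehresmann's theorem / the theory of locally trivial fibrations gives a canonical identification, up to the homotopy class of a path in the contractible base, of $Ml(F_t)$ for all $t\in U$; and because the whole family $F$, the geometric monodromy around $T'_{\delta_0}$, and the intersection and Seifert pairings are all built from this single fibration, these identifications respect $L$, $M_h$ and $I$. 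This establishes local canonical isomorphy, i.e. the first assertion.

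Next I would check independence of the choice of radii, exactly as in the one-singularity case: given two admissible pairs $(\www\varepsilon,\www\delta)$ and $(\varepsilon,\delta)$ with $\www\varepsilon<\varepsilon$, $\www\delta<\delta$, valid over a common neighborhood $U$, the inclusion of the smaller tube into the larger is a fiber homotopy equivalence over $\ppp T'_{\www\delta}$, by the relative version of the cited results (\cite{Mi}, \cite[Lemma 2.2]{LR}) applied fiberwise and uniformly in $t$ — the parameter $t$ merely rides along. Hence the local identifications do not depend on the auxiliary choices, only on the homotopy class of the connecting path in $X$. This is precisely the statement that the data glue to a local system: on overlaps $U_\alpha\cap U_\beta$ the two candidate identifications of $Ml(F)$ agree, since both are induced by paths and the construction is compatible with path composition, so one obtains a genuine local system $Ml(F)$ of free $\Z$-modules of rank $\mu$ on $X$. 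Flatness of $L$, $M_h$, $I$ is automatic because each transition function of the local system, being induced by a fiberwise homeomorphism coming from the ambient fibration, preserves all three. One then simply names the tuple $ML(F)$.

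The main obstacle is the uniformity in step one: one must be sure that a single Milnor pair $(\varepsilon,\delta)$ can be chosen to work simultaneously for a whole neighborhood of $t_0$ in $X$, and that over that neighborhood $F$ really is a locally trivial fibration (rather than just each $F_t$ being one). For the holomorphic case with $X$ a possibly singular reduced complex space this requires a little care — one works with a local embedding $X\hookrightarrow \C^N$ and an ambient holomorphic extension of $F$, or invokes stratification theory, to get the transversality and local triviality; the $C^\infty$ case is more directly handled by Ehresmann. Once that uniform Milnor fibration is in hand, everything else is a routine transcription of the classical single-singularity argument with a harmless extra parameter. I would also remark that the compatibility with suspension and with the sign ambiguities in $Ml(x_{n+1}^2)$, already discussed in Section \ref{c2}, causes no additional trouble here, since the local system is built out of $H_n$ of honest topological fibers.
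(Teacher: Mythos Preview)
Your approach is essentially the same as the paper's, but you misdiagnose the key difficulty and therefore misapply the decisive lemma. You assert that the Milnor pair $(\varepsilon_0,\delta_0)$ chosen for $F_{t_0}$ ``works'' for all nearby $t$, and later cite \cite[Lemma~2.2]{LR} only to compare two admissible pairs that are both valid uniformly over a neighborhood $U$. The paper's proof makes the opposite point: it warns explicitly that $\varepsilon(t),\delta(t)$ \emph{cannot} in general be chosen continuously (a vanishing fold may occur), so $(\varepsilon_0,\delta_0)$ need not be a Milnor pair for $F_t$ in the sense of Section~\ref{c2} (transversality for \emph{all} smaller radii). What survives by openness is only the weaker fact that $F_t^{-1}(\tau)$ meets $\partial B_{\varepsilon_0}^{2n+2}$ transversally for $|\tau|<\delta_0$, giving a locally trivial fibration in the big ball; but its fiber is a priori not the Milnor fiber of $F_t$.

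The actual role of \cite[Lemma~2.2]{LR} is to bridge exactly this gap: for each $t$ near $t_0$ one picks a genuine (possibly much smaller, non-uniform) Milnor pair $(\varepsilon(t),\delta(t))$ for $F_t$, and the lemma gives a fiber homotopy equivalence between the small Milnor fibration of $F_t$ and the restriction of the big-ball fibration at parameter $t$; the latter is in turn diffeomorphic to the Milnor fibration of $F_{t_0}$. This is what makes the identification $Ml(F_t)\cong Ml(F_{t_0})$ canonical and independent of choices. Your paragraph~2 is close to this, but the hypothesis ``valid over a common neighborhood $U$'' is precisely what fails for the small pair, so your invocation of the lemma should be rephrased as a comparison between a uniform big-ball fibration and the individual (non-uniform) Milnor fibration of $F_t$. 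Once that is fixed, the remainder of your gluing argument is fine, and case~(b) follows from case~(a) as you say.
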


\begin{proof}
(a) For any $t\in X$ one can choose $\varepsilon(t)$ and $\delta(t)$ such that
$F_t:Y'(\varepsilon(t),\delta(t))\times\{t\}\to T_{\delta(t)}'$ is a Milnor fibration.
But it may happen that $\epsilon(t)$ and $\delta(t)$ cannot be chosen as
continuous functions (a {\it vanishing fold} might exist).
Luckily \cite[Lemma 2.2]{LR} says that for $F_t$ with $t$ close to $t_0$ and 
$\varepsilon(t)\leq \varepsilon(t_0)$, $\delta(t)\leq \delta(t_0)$,
the inclusion 
$$Y'(\varepsilon(t),\delta(t))\times\{t\}\cap F_t^{-1}(\ppp T_{\delta(t)}')
\hookrightarrow 
B^{2n+2}_{\epsilon(t_0)}\times\{t\}\cap F_t^{-1}(\ppp T_{\delta(t)}')$$
is a fiber homotopy equivalence over $\ppp T_{\delta(t)}'$.
And the second fibration is obviously diffeomorphic to the restriction of the
Milnor fibration of $F_{t_0}$ to $\ppp T_{\delta(t)}'$. This proves (a).

(b) This follows from (a).
\end{proof}

\section{$\mu$-constant monodromy groups}\label{c3}
\setcounter{equation}{0}

Definition \ref{t3.1} presents the first main subject of this paper,
the $\mu$-constant monodromy groups and some subgroups.

\begin{definition}\label{t3.1}
Let $f\in \mmm_{\C^{n+1},0}^2$ have an isolated singularity at $0$.

(a) For any $C^\infty$ or holomorphic $\mu$-constant family $(X,Y,F)$ 
(definition \ref{t2.1}) 
with $F_{t_0}=f$ for some $t_0\in X$, the local system $Ml(F)$ over $X$ yields
a homomorphism $\pi_1(X,t_0)\to G_\Z(f)$. The image is the 
{\rm $\mu$-constant monodromy group} $G(F,t_0)\subset G_\Z(f)$ of this 
$\mu$-constant family.

If $X=S^1$ we call the image of the standard generator of $\pi_1(S^1,t_0)$
the {\rm monodromy} of the $\mu$-constant family.

(b) We define four subgroups of $G_\Z(f)$. 
The first two are called {\rm $\mu$-constant monodromy groups} of $f$.
\begin{eqnarray*}
G^{smar}(f)&:=&\{\textup{the subgroup generated by all }G(F,t_0)
\textup{ as in (a)}\},\\
G^{mar}(f)&:=&\{\pm \psi\, |\, \psi\in G^{smar}(f)\},\\
G^{smar}_\RR(f)&:=&\{\textup{the subgroup generated by all }G(F,t_0)
\textup{ as in (a)} \\
 &&  \textup{ \ where }F_t\sim_\RR f\textup{ for all }t\in X\},\\
G^{mar}_\RR(f)&:=&\{\pm \psi\, |\, \psi\in G^{smar}_\RR(f)\}.
\end{eqnarray*}
\end{definition}

In lemma \ref{t3.5} (c) and in theorem \ref{t3.3} (e) other more compact 
descriptions of $G^{smar}(f)$ and $G^{smar}_\RR(f)$ will be given. 
The indices "{}smar{}" and "{}mar{}" stand for {\it strongly marked}
and {\it marked}. They are motivated by theorem \ref{t4.4} (a) and (b). 
Theorem \ref{t4.4} will put $G^{smar}(f)$ and $G^{mar}(f)$ into action.
Obviously
\begin{eqnarray*}
\begin{matrix}G^{smar}_\RR(f)&\subset& G^{mar}_\RR(f) & & \\
 \cap & & \cap & & \\
G^{smar}(f)&\subset&
G^{mar}(f)&\subset& G_\Z(f).\end{matrix}
\end{eqnarray*}

The two groups $G^{smar}_\RR(f)$ and $G^{mar}_\RR(f)$ are finite (theorem \ref{t6.1} (f)).
They depend on the right equivalence class of $f$. They were studied already
in \cite[ch. 13]{He6}. We cite some results about them in theorem \ref{t3.3} and discuss
them in section \ref{c6}. 
Conjecture \ref{t5.1} would give complete control on them through the Brieskorn lattice.

The two groups $G^{smar}(f)$ and $G^{mar}(f)$ depend up to conjugacy only on the 
$\mu$-homotopy class of $f$. They are hard to calculate.
I propose the following two conjectures.

\begin{conjecture}\label{t3.2}
Let $f\in \mmm^2_{\C^{n+1},0}$ have an isolated singularity at $0$.

(a) $$G^{mar}(f)=G_\Z(f).$$
(b) If all singularities in the $\mu$-homotopy class of $f$ have multiplicity $\geq 3$
then $-\id \notin G^{smar}(f)$, equivalent: then $G^{mar}(f)=G^{smar}(f)\times\{\pm\id\}$.
\end{conjecture}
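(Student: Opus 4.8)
Since Conjecture \ref{t3.2} is only established for the simple and $22$ of the $28$ exceptional singularities (Section \ref{c8}), the plan is a case-by-case verification over those two lists rather than an unconditional argument. I begin with the reductions. For (a) one always has $G^{mar}(f)\subseteq G_\Z(f)$, so it suffices to show that for each $\psi\in G_\Z(f)=\Aut(Ml(f),L)$ either $\psi$ or $-\psi$ lies in $G^{smar}(f)$; equivalently $[G_\Z(f):G^{smar}(f)]\le 2$. For (b), once (a) is known for the $\mu$-homotopy class of $f$ we have $G^{mar}(f)=G^{smar}(f)\cdot\{\pm\id\}=G_\Z(f)$, so $-\id\notin G^{smar}(f)$ becomes equivalent to $[G_\Z(f):G^{smar}(f)]=2$; by Theorem \ref{t4.4}(a),(b) this in turn says exactly that $(f,\id)$ and $(f,-\id)$ lie in different topological components of $M_\mu^{smar}(f)$, which is consistent with Theorem \ref{t3.3}(e),(g) precisely because $\mult f\ge 3$.

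The engine is three ways of producing elements of $G^{smar}(f)$. First, every simple or exceptional singularity has a quasihomogeneous representative $f_0$ in its $\mu$-homotopy class; the $\C^*$-scaling family $F_t(x)=f_0(t^{w_0}x_0,\dots,t^{w_n}x_n)$ over $X=\C^*$ is $\mu$-constant and the image of the generator of $\pi_1(\C^*)$ is $\pm M_h$, so $M_h\in G^{smar}(f)$ (using that $G^{smar}$ depends only on the $\mu$-homotopy class up to conjugacy). Second, every $\varphi\in\RR$ with $f_0\circ\varphi=f_0$ contributes $\varphi_{hom}\in G^{smar}_\RR(f_0)\subseteq G^{smar}(f_0)$, and the symmetry groups of the quasihomogeneous normal forms are explicitly known (reviewed in Section \ref{c6}). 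Third -- and this is what separates the exceptional case from the simple one -- the exceptional families carry moduli (one modulus each for the exceptional unimodal singularities, two for the exceptional bimodal families); the global $\mu$-constant family of Lemma \ref{t3.5}(c) restricts over the modulus parameter space with its ``caustic'' removed (the finitely many parameter values at which the singularity jumps out of the $\mu$-homotopy class), and the induced monodromy representation of the (orbifold) fundamental group of that punctured space lands in $G^{smar}(f)$; in the unimodal case the relevant loops are a small loop around each caustic point together with the residual weighted scaling $m\mapsto\zeta m$ of the normal form, which contributes a fractional power of $M_h$ (as for $x^3+y^7+a\,xy^5$).

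With these in hand the proof is a finite computation per family. Using known explicit distinguished bases, Coxeter-Dynkin diagrams and Seifert forms one writes generators of $G_\Z(f)=\Aut(Ml(f),L)$ and checks by matrix arithmetic that the elements produced by the three mechanisms generate $G_\Z(f)$ modulo $\pm\id$, which gives (a); and that, when $\mult f\ge 3$, they do \emph{not} generate $-\id$, which gives (b). For the latter I would exhibit a character $G_\Z(f)\to\{\pm1\}$ -- a sign- or spinor-type invariant, for instance one detected on the set of distinguished bases or on the $\mu$-constant stratum -- that is trivial on every generator coming from a $\mu$-constant family but nontrivial on $-\id$; showing that this character is well defined on all of $G_\Z(f)$, not merely on a generating set, and that it genuinely obstructs $-\id$, is the place where the hypothesis $\mult f\ge 3$ enters through Theorem \ref{t3.3}.

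The main obstacle is the third mechanism together with its interface with the computation: identifying the modulus parameter space (caustic removed) inside the global $\mu$-constant family, computing the monodromy of its orbifold fundamental group on $(Ml(f),L)$ -- a genuinely two-parameter problem for the bimodal families -- and then verifying that these matrices, together with the symmetry matrices and $M_h$, really do exhaust $G_\Z(f)$ up to sign. It is exactly here that the remaining $6$ exceptional families, and the simple-elliptic and hyperbolic series, resist the argument and are deferred to the sequel; constructing and validating the separating character for part (b) is the secondary difficulty.
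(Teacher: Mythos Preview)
Your plan misidentifies where the real work lies, and this is why your ``third mechanism'' looks like the main obstacle when in fact it contributes nothing for the $22$ exceptional families. The paper's approach is the opposite of yours: rather than generating $G^{smar}(f)$ from below with many monodromy matrices, it bounds $G_\Z(f)$ from above by a purely number-theoretic argument. Lemma~\ref{t8.2} (via the cyclotomic Lemma~\ref{t8.1}) shows that if $M_h$ has simple eigenvalues, a specific divisibility pattern on the orders, and the Milnor lattice is $M_h$-cyclic (Orlik's conjecture, known here), then already $\Aut(Ml(f),M_h,I)=\{\pm M_h^k\}$, hence $G_\Z(f)=\{\pm M_h^k\}$. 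Since $M_h\in G^{smar}(f)$ by Theorem~\ref{t3.3}(c), part (a) is then immediate --- mechanisms~2 and~3 are never used. Your third mechanism is in any case vacuous for these families: the modulus space is $\C^{\mmod(f)}$ with no caustic to remove (every parameter value stays in the $\mu$-homotopy class), so it is simply connected and produces no monodromy beyond the $\C^*$-scaling you already counted as mechanism~1.

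For part~(b) the paper again avoids constructing any character. Once (a) gives $M_\mu^{mar}\cong\C^{\mmod(f)}$, the double cover $M_\mu^{smar}\to M_\mu^{mar}$ of Theorem~\ref{t4.4}(e) over a simply connected base must be the trivial two-sheeted cover, whence $M_\mu^{smar}$ has two components and $-\id\notin G^{smar}(f)$. The one case requiring genuine extra work is $D_{2k}$ (repeated eigenvalue $-1$, so Lemma~\ref{t8.2} fails): there the paper computes $G^{smar}$ via $\Stab_{G_w}(f)$ (your mechanism~2, using Theorem~\ref{t6.2}) and separately bounds $|G_\Z|$ using the $D_{2k}$ root lattice and the $(-1)$-eigenlattice, then compares orders. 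So your mechanism~2 is the right tool, but only for $D_{2k}$; the $22$ exceptional families and $A_\mu,D_{2k+1},E_\mu$ need only the single element $M_h$ together with the upper bound on $G_\Z$.
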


At first sight, conjecture \ref{t3.2} (a) might look safe as all monodromy groups 
of all $\mu$-constant families together should give a large subgroup of $G_\Z(f)$.
At second sight, it turns out to be a fragile conjecture.
Often there are other $\Z$-lattices $V_\Z$ of maximal rank in $Ml(f)\otimes_\Z\Q$
such that $\Aut(V_\Z,L)\supsetneq G_\Z(f)$. Conjecture \ref{t3.2} (a) is related 
to hidden properties which distinguish $(Ml(f),L)$ from other $\Z$-lattices in
$Ml(f)\otimes_\Z\Q$. Section \ref{c8} will give examples.
See also remark \ref{t3.4}.

Conjecture \ref{t3.2} (b) is even more mysterious. If both conjectures are true
then $G_\Z(f)=G^{smar}(f)\times\{\pm \id\}$ for $f$ as in (b).
Is there an a priori way to distinguish such a subgroup of index $2$ in $G_\Z(f)$?

Theorem \ref{t3.3} collects some evidence for the conjectures and some results
about the four groups.
The singularities with modality $\leq 2$ are given in \cite{AGV1}.

\begin{theorem}\label{t3.3}
Let $f\in \mmm^2_{\C^{n+1},0}$ have an isolated singularity at $0$.

(a) The conjectures \ref{t3.2} (a) and (b) are true for all singularities
with modality $\leq 1$, that means, simple ($ADE$), simple-elliptic
(=parabolic, $\www E_6=P_8$, $\www E_7=X_9$, $\www E_8=J_{10}$), 
hyperbolic ($T_{pqr}$) and exceptional unimodal.
They are also true for the 14 families of exceptional bimodal singularities
(for the other bimodal singularities I did not yet make enough calculations),
and for the Brieskorn-Pham singularities $\sum_{i=0}^n x_i^{a_i}$ 
with pairwise coprime exponents.

(b) If some singularity in the $\mu$-homotopy class of $f$ has multiplicity $2$
then $-\id\in G^{smar}(f)$, equivalent: then $G^{smar}(f)=G^{mar}(f)$.

(c) $M_h\in G^{smar}(f)$. If $f$ is quasihomogeneous then $M_h\in G^{smar}_\RR(f)$.

(d) If $\mult f\geq 3$ then $M_h^k\neq-\id$ for any $k\in \Z$.

(e) $$G^{smar}_\RR(f)=\{\varphi_{hom}\in G_\Z(f)\, |\, \varphi\in \RR\textup{ with }
f=f\circ \varphi\}.$$

(f) $G^{smar}_\RR(f)$ and $G^{mar}_\RR(f)$ are finite.

(g) $$-\id\notin G^{smar}_\RR(f)\iff \mult f\geq 3.$$
Equivalent: $G^{mar}_\RR(f)=G^{smar}_\RR(f)$ if $\mult f=2$, and
$G^{mar}_\RR(f)=G^{smar}_\RR(f)\times\{\pm\id\}$ if $\mult f\geq 3$.

(h) $G^{mar}_\RR(f)=G^{mar}_\RR(f+x_{n+1}^2)$.

(i) If all singularities in the $\mu$-homotopy class of $f+x_{n+1}^2$ have
multiplicity $2$ then $G^{mar}(f)=G^{mar}(f+x_{n+1}^2)$.
\end{theorem}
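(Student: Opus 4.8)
The plan is to treat the nine parts more or less in the order given, since several later parts build on earlier ones. Parts (e), (f), (g) concern the ``right-equivalence'' groups $G^{smar}_\RR(f)$ and can be handled together: for (e), given $\varphi\in\RR$ with $f=f\circ\varphi$, connect $\id$ to $\varphi$ by a path $\varphi_s$ in $\RR$ (the group $\RR$ of coordinate changes is connected — interpolate on jets and use finite determinacy) and set $F:=f\circ\varphi_s$ over $X=[0,1]$ glued at the endpoints via $\varphi$ to get an $S^1$-family all of whose members are right equivalent to $f$ and whose monodromy is $(\varphi^{-1})_{hom}$ or $\varphi_{hom}$; conversely, any $\mu$-constant family with all members right-equivalent to $f$ can be trivialized fiberwise by an $\RR$-valued path, and its monodromy is realized by some such $\varphi$. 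This needs a careful argument that the homology monodromy of such a trivialized family really is $\varphi_{hom}$; I would lean on the gluing/local-system statement of Theorem \ref{t2.2}. Part (f) is then immediate because $G^{smar}_\RR(f)$ is a subgroup of $G_\Z(f)$ (which acts properly discontinuously, so its finite-order-like pieces are controlled) — more directly, the $\RR$-symmetry group of $f$ acts on the $\mu$-jet space and its image in $G_\Z(f)$ is finite by a reductivity/algebraicity argument as in \cite[ch.~13]{He6}. For (g), the splitting lemma gives: if $\mult f=2$ then $f\sim_\RR g+x_n^2$, and the coordinate change $x_n\mapsto -x_n$ fixes $f$ and induces $-\id$ on $Ml(x_n^2)$ hence, via the suspension identification, $-\id$ on $Ml(f)$; conversely if $-\id=\varphi_{hom}$ for some $\varphi$ fixing $f$, one argues (again via the suspension formulas $M_h(f+x^2)=-M_h(f)$, or via a direct fixed-point/multiplicity count) that $f$ must have multiplicity $2$.

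Part (b) follows from (e)+(g): if some $g$ in the $\mu$-homotopy class has $\mult g=2$, then $-\id\in G^{smar}_\RR(g)\subset G^{smar}(g)$, and since $G^{smar}$ depends (up to conjugacy) only on the $\mu$-homotopy class, $-\id\in G^{smar}(f)$; the ``equivalent'' reformulation is the definition of $G^{mar}$. Part (d): if $\mult f\ge 3$ and $M_h^k=-\id$ for some $k$, then $M_h^{2k}=\id$, so the monodromy is finite, which (by Brieskorn/quasihomogeneity-type arguments, or directly from the structure of the spectrum) forces a contradiction with $\mult f\ge 3$ — more cleanly, $-\id\in\langle M_h\rangle\subset G^{smar}_\RR(f)$ when $f$ is quasihomogeneous, contradicting (g); for the general case reduce to the quasihomogeneous principal part or argue on eigenvalues of $M_h$ (a quasiunipotent operator equal to $-\id$ on some power has all eigenvalues roots of unity of the form they cannot take when $\mult f\ge3$). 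Part (c): the geometric monodromy of the Milnor fibration is realized by a $\mu$-constant family over $S^1$ (rescaling $f(x)\mapsto f(e^{2\pi i s}\cdot x)$ up to coordinate change, or the standard monodromy construction), giving $M_h\in G^{smar}(f)$; if $f$ is quasihomogeneous the rescaling is literally a coordinate change fixing $f$ up to the weight action, so $M_h\in G^{smar}_\RR(f)$ by (e).

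Parts (h) and (i) are the suspension statements. For (h): by the two ``equally canonical up to sign'' isomorphisms $Ml(f)\to Ml(f+x_{n+1}^2)$ we have $G_\Z(f)=G_\Z(f+x_{n+1}^2)$ as subgroups of $\Aut$, and by (e) applied to both sides together with the splitting-lemma equivalence $\varphi_1\sim_\RR\varphi_2\iff\varphi_1+x^2\sim_\RR\varphi_2+x^2$ (which lifts to $\RR$-symmetries), the symmetry groups match up to the sign ambiguity, and passing to $\{\pm\psi\}$ kills exactly that ambiguity — so $G^{mar}_\RR(f)=G^{mar}_\RR(f+x_{n+1}^2)$. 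For (i): since every singularity in the $\mu$-homotopy class of $f+x_{n+1}^2$ has multiplicity $2$, the splitting lemma writes each such $g$ as $h+x^2$ with $h$ in the $\mu$-homotopy class of $f$ (one must check the $\mu$-constant deformation descends — this is where I expect the real work, matching families up and down the suspension), so every $\mu$-constant family of $f+x_{n+1}^2$ comes from one of $f$, giving $G^{smar}(f+x_{n+1}^2)\subset$ (image of) $G^{smar}(f)$; the reverse inclusion is the trivial suspension of families, and then taking $\{\pm\psi\}$ — using that $-\id$ is present by (b) on the multiplicity-$2$ side — yields equality of the marked groups. The main obstacle throughout is (i) together with the family-trivialization step in (e): one must show that $\mu$-constant deformations and $\RR$-trivializations can always be chosen compatibly with the suspension/splitting decomposition, which requires a parametrized version of the splitting lemma and some care with the vanishing-fold subtlety already flagged in the proof of Theorem \ref{t2.2}.
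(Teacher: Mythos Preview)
Your approach to (b), (c), (h) is essentially the paper's, and your direct route to (e) via connecting $\id$ to $\varphi$ through $\RR$ is reasonable (the paper instead defers (e)--(h) to the F-manifold machinery of Theorem \ref{t6.1} and the moduli-space argument of Theorem \ref{t4.4}, which gives (e) as the second equality in \ref{t4.4}(d) and (g), (h) via the homomorphisms $R_f\to\Aut_M$ and $R_f\to G_\Z$). Your converse direction in (g) is underspecified but not wrong in spirit; the paper handles it by proving that $()_{hom}:R_f\to G_\Z(f)$ is injective (Theorem \ref{t6.1}(g), via the action on $\Omega_f$ and the Brieskorn lattice) and computing the kernel of $()_M:R_f\to\Aut_M$ separately in the two multiplicity cases.

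There is a genuine gap in (d). Your first line ``$M_h^{2k}=\id$, so the monodromy is finite, which forces a contradiction with $\mult f\ge 3$'' is simply false: every quasihomogeneous singularity has finite-order monodromy, and most have multiplicity $\ge 3$ (e.g.\ $x^3+y^3$). Your second line, using (g), only works when $M_h\in G^{smar}_\RR(f)$, i.e.\ when $f$ is quasihomogeneous; for a general $f$ part (c) only puts $M_h$ into $G^{smar}(f)$, not $G^{smar}_\RR(f)$, so (g) does not apply. Your third line ``reduce to the quasihomogeneous principal part'' does not work either: the principal part need not have an isolated singularity, and even when it does its monodromy need not agree with that of $f$. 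The paper's proof uses an ingredient you are missing: A'Campo's trace formula $\trace M_h^k=(-1)^{n+1}$ for $0<k<\mult f$. With $\mult f\ge 3$ this gives $\trace M_h=\trace M_h^2=(-1)^{n+1}$; if $M_h^k=-\id$ then all eigenvalue orders lie in $\{2^a b_1,\dots,2^a b_r\}$ with the $b_j$ odd, nonzero $\trace M_h$ forces $a=1$, and then $\trace M_h^2=-\trace M_h$, a contradiction.
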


\begin{proof}
(a) See theorem \ref{t8.3}, theorem \ref{t8.4} and remark \ref{t8.5} (i) for
the simple, 22 of the 28 exceptional and the Brieskorn-Pham singularities. 
The remaining 6 families of unimodal and bimodal exceptional singularities,
the simple-elliptic and the hyperbolic singularities will be treated in another paper.

(b) Let $g(x_0,...,x_{n-1})+x_n^2$ be in the $\mu$-homotopy class of $f$.
Then 
$$g+t\cdot x_n^2,\quad t\in S^1=:X,$$
is a $C^\iiii$ $\mu$-constant family. With the canonical isomorphism
$Ml(g+x_n^2)\cong Ml(g)\otimes Ml(x_n^2)$ one sees that its monodromy is 
$\id\otimes (-\id)=-\id\in G_\Z(g+x_n^2)$. Any path from $f$ to $g+x_n^2$ in a 
$C^\iiii$ $\mu$-constant family induces an isomorphism $B:ML(f)\to ML(g+x_n^2)$
with $G^{smar}(f)=B^{-1}\circ G^{smar}(g+x_n^2)\circ B$.

(c) The monodromy of the $C^\iiii$ $\mu$-constant family
$$ F_t:= t^{-1}\cdot f,\quad t\in S^1,$$
is $M_h$, because $F_t^{-1}(\tau)=f^{-1}(t\cdot \tau)$ for $\tau>0$.
If $f$ is quasihomogeneous then $F_t\sim_\RR f$.

(d) Th\'eor\`eme 1 in \cite{AC} (and already a letter from Deligne to A'Campo,
see \cite{AC}) shows
$$\trace M_h^k=(-1)^{n+1}\qquad \textup{if }0<k<\mult f.$$
Let $\Phi_m$ for $m\in \Z_{\geq 1}$ be the cyclotomic polynomial of primitive
unit roots of order $m$. It is well known that
\begin{eqnarray*}
\sum_{\Phi_m(\lambda)=0}\lambda = \left\{\begin{matrix}
0 & \textup{ if }p^2|m\textup{ for some prime number }p,\\
(-1)^s & \textup{ if }m=p_1\cdot ...\cdot p_s\textup{ with different}\\
 & \textup{prime numbers }p_1,...,p_s.\end{matrix}\right.
\end{eqnarray*}
Now suppose $\mult f\geq 3$. Then $\mu\geq 2$ and $\trace M_h=\trace M_h^2=(-1)^{n+1}$.
Further suppose $M_h^k=-\id$ for some $k\in\Z$. Then there exist
$a\in \Z_{\geq 1}$ and odd numbers $b_1,...,b_r\in \Z_{\geq 1}$ with
$$\{\ord \lambda\, |\, \lambda \textup{ eigenvalue of }M_h\}
=\{2^a\cdot b_1,...,2^a\cdot b_r\}.$$
Now $\trace M_h\neq 0$ implies $a=1$. 
Then $\{\ord\lambda\, |\, \lambda\textup{ eigenvalue of }M_h^2\}=\{b_1,...,b_r\}$
and $\trace M_h^2=-\trace M_h$, a contradiction.

(e) See theorem \ref{t4.4} (d). 

(f)+(g)+(h) See theorem \ref{t6.1} (i)

(i) See theorem \ref{t4.4} (e).
\end{proof}

\begin{remarks}\label{t3.4}
There is a set $\BB^*\subset Ml(f)^\mu$ of {\it distinguished bases},
see \cite{AGV2} or \cite{Eb} for the definition. 

\noindent
{\bf Claim:} The elements of $G^{mar}(f)$
respect this set, so
$$G^{mar}(f)\subset \Aut(Ml(f),L,\BB^*)\subset G_\Z(f).$$
If one knows how distinguished bases arise, it is not hard to see this claim.
I will discuss it in another paper. Here I just want to point to an implication of 
conjecture \ref{t3.2} (a): It would imply equalities. Equivalent to the second equality 
$\Aut(Ml(f),L,\BB^*)= G_\Z(f)$ is that any basis of $Ml(f)$ which has
the same Coxeter-Dynkin diagram as some distinguished basis is also distinguished.
This is true for the singularities in theorem \ref{t3.3} (a). For the simple
and the simple-elliptic singularities there are older proofs.
It seems to be hard to establish it in any case.
\end{remarks}

Now we will describe a holomorphic $\mu$-constant family which in a certain sense
induces any $\mu$-constant family of singularities in a fixed $\mu$-homotopy class
and whose monodromy group is the group $G^{smar}$.
This is based on the theory of Tougeron and Mather of jets and finite determinacy
of singularities \cite{Ma2} (see also \cite{BL}).

Write $\OO=\OO_{\C^{n+1},0}$ and $\mmm=\mmm_{\C^{n+1},0}$. The $k$-jet of a function
germ $f\in\OO$ is the class $j_kf\in\OO/\mmm^{k+1}$, the $k$-jet of a 
coordinate change $\varphi=(\varphi_0,...,\varphi_n)\in\RR$ is 
$j_k\varphi=(j_k\varphi_0,...,j_k\varphi_n)$. 
The action of $\RR$ on $\mmm^2$ pushes down to an action of the algebraic group
$j_k\RR$ on $\mmm^2/\mmm^{k+1}$ as a smooth affine algebraic variety.

By a result of Tougeron and Mather \cite[theorem (3.5)]{Ma2} a singularity
$f\in \mmm^2$ with Milnor number $\mu$ is $\mu+1$-determined, that means,
any function germ $g\in\mmm^2$ with $j_{\mu+1}g=j_{\mu+1}f$ is right equivalent
to $f$.

Fix $\mu$ and $k\geq \mu+1$. For any singularity $g\in\mmm^2$ with $\mu(g)\leq k-1$
the codimension of the orbit $j_k\RR\cdot j_kg$ in $\mmm^2/\mmm^{k+1}$
is $\mu(g)-1$. The union of all orbits with codimension $\geq \mu-1$ is an algebraic
subvariety of $\mmm^2/\mmm^{k+1}$. 
The set $\{j_kg\, |\, g\in \mmm^2,\mu(g)=\mu\}$ is Zariski open in it and thus
a quasiaffine variety.

For a fixed singularity $f\in\mmm^2$ with $\mu(f)=\mu$ denote by $C(k,f)$ the 
topological component of it which contains $j_kf$. It is also a quasiaffine variety.
For any $t\in C(k,f)$ denote by $F_t$ the unique polynomial of degree $\leq k$
with $j_kF_t=t$. These polynomials glue to a regular function 
$F:\C^{n+1}\times C(k,f)\to\C$.

\begin{lemma}\label{t3.5}
(a) $(C(k,f),\C^{n+1}\times C(k,f),F)$ is a holomorphic $\mu$-constant family.

(b) For any $C^\infty$ $\mu$-constant family $(X,Y,E)$ in the $\mu$-homotopy class of $f$
the local system $ML(E)$ over $X$
is obtained from $ML(F)$ by pull back via the jet map 
$X\to C(k,f),\ t\mapsto j_kE_t$.

(c) Denote by $B:ML(f)\to ML(j_kf)$ the isomorphism induced from the $C^\iiii$
$\mu$-constant family $f+t\cdot (j_kf-f)$, $t\in [0,1]$. Then
\begin{eqnarray*}
G^{smar}(f) &=& B^{-1}\circ G(F,j_kf)\circ B,\\
G^{smar}_\RR(f) &=& B^{-1}\circ G(F_{|j_k\RR\cdot j_kf},j_kf)\circ B.
\end{eqnarray*}
\end{lemma}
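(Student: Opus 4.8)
\textbf{Proof plan for Lemma \ref{t3.5}.}

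The plan is to treat the three parts in order, since (b) and (c) both rely on (a) and on the jet/finite-determinacy machinery set up just above the statement.

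For part (a), I would verify the conditions of definition \ref{t2.1}(b) directly. The base $C(k,f)$ is a quasiaffine variety, hence a connected reduced complex space (connectedness is built into the definition of $C(k,f)$ as a topological component). The family $F:\C^{n+1}\times C(k,f)\to\C$ is regular, hence holomorphic, and $Y=\C^{n+1}\times C(k,f)$ is certainly an open neighborhood of $\{0\}\times C(k,f)$. The only substantive point is that $F_t$ has an isolated singularity at $0$ with Milnor number exactly $\mu$ for every $t\in C(k,f)$: but by construction $C(k,f)$ consists of $k$-jets $j_kg$ of singularities $g\in\mmm^2$ with $\mu(g)=\mu$, and since $k\geq\mu+1$, the $\mu+1$-determinacy theorem \cite[theorem (3.5)]{Ma2} shows that $F_t$ (the unique degree-$\leq k$ polynomial with $j_kF_t=t$) is right equivalent to the corresponding $g$, so $\mu(F_t)=\mu$. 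Hence all conditions of definition \ref{t2.1}(b) hold. (Here I should also note $F_0$ vanishes at $0$ since all jets lie in $\mmm^2/\mmm^{k+1}$.)

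For part (b), given a $C^\infty$ $\mu$-constant family $(X,Y,E)$ in the $\mu$-homotopy class of $f$, I would first observe that the jet map $j_k:X\to\mmm^2/\mmm^{k+1}$, $t\mapsto j_kE_t$, is $C^\infty$ (it is a composition of $E$ with a linear projection after Taylor expansion), and its image lies in the quasiaffine variety of $\mu$-jets; since $X$ is connected and $E$ contains $f$ somewhere in its $\mu$-homotopy class — more precisely one uses that $X$ maps into the component $C(k,f)$ because the whole family stays in one $\mu$-homotopy class and hence, by theorem \ref{t2.2} applied to a path connecting $E_{t_0}$ to $f$, lands in the same topological component of the $\mu$-jet variety — the map factors as $X\to C(k,f)$. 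Then the key claim is that $ML(E)$ is the pullback of $ML(F)$ under this map. This follows from the local canonical identification of Milnor lattices in theorem \ref{t2.2}: near any $t_0\in X$, the family $E$ and the family $F$ pulled back via the jet map agree on $(\mu+1)$-jets, and I would invoke that the Milnor lattice with its $L$, $M_h$, $I$ depends (locally canonically) only on the $(\mu+1)$-jet of the function — more carefully, one connects $E_t$ to $F_{j_kE_t}$ by the linear $C^\infty$ $\mu$-constant path $E_t+s(F_{j_kE_t}-E_t)$, $s\in[0,1]$, which has constant $(\mu+1)$-jet, hence constant Milnor number, giving a canonical isomorphism $ML(E_t)\cong ML(F_{j_kE_t})$ compatible with the local systems. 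This identifies the two local systems, proving (b).

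For part (c), the isomorphism $B:ML(f)\to ML(j_kf)$ comes from the linear path $f+t(j_kf-f)$, $t\in[0,1]$, which is a $C^\infty$ $\mu$-constant family by the same constant-$(\mu+1)$-jet argument (note $k\geq\mu+1$ so this path has constant $(\mu+1)$-jet). For the first equality: by definition $G^{smar}(f)$ is generated by all $G(E,t_0)$ over all $\mu$-constant families containing $f$; by part (b) each such monodromy group is, after transport by $B$, the image under $\pi_1\big(j_k(X),j_kf\big)\to\pi_1(C(k,f),j_kf)\to G_\Z(j_kf)$ of loops in $X$, hence contained in $B\circ G^{smar}(f)\circ B^{-1}$ mapping into $G(F,j_kf)$; conversely, $F$ itself is a holomorphic $\mu$-constant family containing $j_kf$, and by lemma \ref{t3.5}(a) together with the fact that loops in $C(k,f)$ can be realized (one may take a $C^\infty$ family over $S^1$ mapping to any given loop in $C(k,f)$, e.g. the restriction of $F$), $G(F,j_kf)$ is itself among the generating monodromy groups, giving the reverse inclusion. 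For the second equality one repeats the argument restricting throughout to families with $E_t\sim_\RR f$ for all $t$: under the jet map such a family lands in the orbit $j_k\RR\cdot j_kf$ (by $(\mu+1)$-determinacy, $j_kE_t\in j_k\RR\cdot j_kf$ iff $E_t\sim_\RR f$), and conversely $F$ restricted to that orbit is a holomorphic $\mu$-constant family all of whose members are right equivalent to $f$; matching the monodromy groups gives $G^{smar}_\RR(f)=B^{-1}\circ G(F_{|j_k\RR\cdot j_kf},j_kf)\circ B$.

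\textbf{Main obstacle.} The routine parts are the verification of definition \ref{t2.1}(b) in (a) and the bookkeeping of images of $\pi_1$ in (c). The real content — and the step I expect to be the main obstacle — is the precise justification in (b) that passing from a function germ to its $k$-jet (with $k\geq\mu+1$) does not change the local system $ML$ and is compatible with arbitrary $\mu$-constant deformations; this rests on showing that the linear interpolation $E_t+s(F_{j_kE_t}-E_t)$ is genuinely a $C^\infty$ $\mu$-constant family over $X\times[0,1]$ (constancy of $(\mu+1)$-jet gives $\mu$-constancy via $(\mu+1)$-determinacy, but one must also check the singularity stays isolated at $0$ and the family is $C^\infty$ jointly in all parameters) and then invoking theorem \ref{t2.2} for this auxiliary family to get the canonical identification of local systems. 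Once that interpolation lemma is in hand, everything else is formal.
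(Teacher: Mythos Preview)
Your proposal is correct and follows essentially the same route as the paper: finite determinacy gives (a), the linear interpolation family $E_t+s\,(j_kE_t-E_t)$ over $X\times[0,1]$ (exactly the family $\widetilde E_{(s,t)}$ the paper writes down) gives (b), and (c) follows formally from (b) together with the analogous restriction to the orbit $j_k\RR\cdot j_kf$. Your identification of the interpolation step as the substantive point matches the paper's emphasis; the only imprecision is in the phrasing of the reverse inclusion in (c), where you should say that $B^{-1}\circ G(F,j_kf)\circ B$ arises as the monodromy group of a family through $f$ (obtained by conjugating loops in $C(k,f)$ by the interpolation path), not that $G(F,j_kf)$ is literally among the generators of $G^{smar}(f)$.
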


\begin{proof}
(a) $\mu(F_t)=\mu$ due to the finite determinacy.

(b) Again due to the finite determinacy, the family
$$\www E_{(s,t)} := E_s +t\cdot (j_kE_s - E_s),\quad (s,t)\in X\times [0,1],$$
is a $C^\infty$ $\mu$-constant family. Its restriction to $X\times \{1\}$ 
is induced by $F$ via the natural map $X\times\{1\}\to C(k,f)$.

(c) This follows from (b) and an analogous statement for $\mu$-constant families
$(X,Y,G)$ with $G_s\sim_\RR f$ for any $s\in X$ and the restriction of the family
$F$ to $j_k\RR\cdot j_kf$.
\end{proof}

\begin{remarks}\label{t3.6}
(i) In definition \ref{t3.1} (b) it is sufficient to consider $C^\infty$ 
$\mu$-constant families over $S^1$. 
Replacing $C^\iiii$ by continuous and piecewise $C^\iiii$ would not give more,
because one can smoothen a curve at a point where it is only piecewise $C^\infty$
by a reparametrization using a (monotonous) $C^\iiii$-function
$[0,1]\to[0,1]$ with $0\mapsto 0$, $\frac{1}{2}\mapsto \frac{1}{2}$ 
and $1\mapsto 1$ which is constant near $\frac{1}{2}$.

(ii) Also a $\mu$-constant family $G:Y\to \C$ over $S^1$ where $Y\to S^1$
is a priori only locally isomorphic to 
((a neighborhood of $\{0\}\times S^1$ in $\C^{n+1}\times S^1)\to S^1$) would not give more.
Then $Y\to S^1$ is globally isomorphic to 
((a neighborhood of $\{0\}\times S^1$ in $\C^{n+1}\times S^1)\to S^1$), as a $C^\iiii$-family
of neighborhoods of $0$ in $\C^{n+1}$.
\end{remarks}

\section{Moduli space of marked singularities}\label{c4}
\setcounter{equation}{0}

\noindent
Now we come to the second main subject of this paper, (strongly) marked
singularities and moduli spaces for them.

\begin{definition}\label{t4.1}
Let $f_0\in\mmm^2_{\C^{n+1},0}$ be a function germ with an isolated singularity at $0$
with Milnor number $\mu$ (short: a {\rm singularity}).
Recall $ML(f)=(Ml(f),L,M_h,I)$ from section \ref{c2}.

(a) A {\rm strongly marked singularity} is a tuple $(f,\rho)$ where $f\in\mmm^2$ is 
a singularity in the $\mu$-homotopy class of $f_0$ and 
$\rho:ML(f)\to ML(f_0)$ is an isomorphism.

(b) A {\rm marked singularity} is a tuple $(f,\pm\rho)$ with $f$ and 
$\rho$ as in (a) (writing $\pm \rho$ we mean the set $\{\rho,-\rho\}$,
neither $\rho$ nor $-\rho$ is preferred, so $(f,\pm \rho)=(f,\pm(-\rho))$).

(c) Two strongly marked singularities $(f_1,\rho_1)$ and $(f_2,\rho_2)$ 
are {\rm right equivalent} (notation: $(f_1,\rho_1)\sim_\RR(f_2,\rho_2)$)
if a coordinate change $\varphi\in\RR$ exists with
$$f_1=f_2\circ\varphi\quad\textup{and}\quad\rho_1=\rho_2\circ\varphi_{hom}.$$

(d) Two marked singularities $(f_1,\pm\rho_1)$ and $(f_2,\pm\rho_2)$ 
are {\rm right equivalent} (notation: $(f_1,\pm\rho_1)\sim_\RR(f_2,\pm\rho_2)$)
if a coordinate change $\varphi\in\RR$ exists with
$$f_1=f_2\circ\varphi\quad\textup{and}\quad(\rho_1=\rho_2\circ\varphi_{hom}
\quad\textup{ or }\quad \rho_1=-\rho_2\circ\varphi_{hom}).$$
\end{definition}

\begin{remarks}\label{t4.2}
(a) The notions {\it strongly marked} and {\it marked} are closely related,
but the first looks more natural than the second.
We use also the second notion, for two reasons:

(i) $(f,\rho)$ and $(f,-\rho)$ have the same value under the period map
to $D_{BL}$ considered in section \ref{c5}.

(ii) $(f,\rho)\sim_\RR(f,-\rho)$ if $\mult f=2$ and $(f,\rho)\not\sim_\RR(f,-\rho)$
if $\mult f\geq 3$, by theorem \ref{t3.3} (e) and (g). This implies that the moduli
space for strongly marked singularities in theorem \ref{t4.3} is not 
Hausdorff if a $\mu$-homotopy class contains singularities with multiplicity $\geq 3$
and singularities with multiplicity $2$. The moduli space for marked singularities
is not affected by this.

(b) Because of (ii), we will sometimes make one of the following two assumptions.
\begin{eqnarray}\label{4.1}
&\textup{Assumption }\eqref{4.1}:&
\textup{ Any singularity in the }\mu\textup{-homotopy class}\\ \nonumber
&&\textup{ of }f_0\textup{ has multiplicity }\geq 3.\\   \label{4.2}
&\textup{Assumption }\eqref{4.2}:&  
\textup{ Any singularity in the }\mu\textup{-homotopy class}\\ \nonumber
&&\textup{ of }f_0\textup{ has multiplicity }2.
\end{eqnarray}
For $n\neq 2$ the topological type of a singularity is constant within a 
$\mu$-homotopy class \cite[theorem (2.1)]{LR}.
Then one of the two assumptions would follow from Zariski's multiplicity conjecture.
But Zariski's multiplicity conjecture is proved essentially only for curve singularities
and quasihomogeneous singularities.
For curve singularities and quasihomogeneous singularities \eqref{4.1} or \eqref{4.2}
holds.
\end{remarks}

Theorem \ref{t4.3} and theorem \ref{t4.4} (and theorem \ref{t3.3}) 
are the main results of the paper.
Theorem \ref{t4.3} is related to \cite[theorem 13.15]{He6}. It will be proved in 
section \ref{c7}.

\begin{theorem}\label{t4.3}
Let $f_0\in\mmm_{\C^{n+1},0}^2$ be a singularity with Milnor number $\mu$ 
and $j_kf_0=f_0$ for some $k\geq \mu+1$. Fix this $k$. Define the sets
\begin{eqnarray*}
M_\mu^{smar}(f_0)&:=&\{\textup{strongly marked }(f,\rho)\, |\\ 
&& f\textup{ in the }\mu\textup{-homotopy class of }f_0\}/\sim_\RR,\\
M_\mu^{mar}(f_0)&:=&\{\textup{marked }(f,\pm\rho)\, |\\ 
&& f\textup{ in the }\mu\textup{-homotopy class of }f_0\}/\sim_\RR.
\end{eqnarray*}
(a) Recall the set $C(k,f_0)\subset\mmm^2/\mmm^{k+1}$ discussed before
lemma \ref{t3.5}. The sets
\begin{eqnarray*}
C^{smar}(k,f_0)&:=&\{(f,\rho)\, |\, f\in C(k,f_0),\\
&& \rho:ML(f)\to ML(f_0)
\textup{ an isomorphism}\},\\
C^{mar}(k,f_0)&:=&\{(f,\pm\rho)\, |\, f\in C(k,f_0),\\ 
&& \rho:ML(f)\to ML(f_0)
\textup{ an isomorphism}\}
\end{eqnarray*}
are reduced complex spaces and locally isomorphic to $C(k,f_0)$.
As sets $M_\mu^{smar}=C^{smar}(k,f_0)/j_k\RR$, $M_\mu^{mar}=C^{mar}(k,f_0)/j_k\RR,$
here $j_k\RR$ acts by 
$j_k\varphi:(f,\rho)\mapsto (f\circ j_k\varphi^{-1}, \rho\circ \varphi_{hom}^{-1})$ 
on $C^{smar}(k,f_0)$ and similarly on $C^{mar}(k,f_0)$.

(b) $C^{mar}(k,f_0)/j_k\RR$ is an analytic geometric quotient.
The induced reduced complex structure on $M_\mu^{mar}$ is independent of $k$.
The germ {}\\{} $(M_\mu^{mar},[(f,\pm \rho)])$ is isomorphic to the $\mu$-constant
stratum in a semiuniversal unfolding of $f$
(see section \ref{c7} for the $\mu$-constant stratum).
The canonical complex structure on $\mu$-constant strata from
\cite[theorem 12.4]{He6} induces a canonical complex structure on $M_\mu^{mar}$.
If not said otherwise, $M_\mu^{mar}$ will be considered with the canonical 
complex structure.

(c) For any $\psi\in G_\Z(f_0)=:G_\Z$, the map
$$\psi_{mar}:M_\mu^{mar}\to M_\mu^{mar},\quad
[(f,\pm\rho)]\to [(f,\pm\psi\circ\rho)]$$
is an automorphism of $M_\mu^{mar}$.  The action
$$G_\Z\times M_\mu^{mar},\quad 
(\psi,[(f,\pm\rho)]\mapsto \psi_{mar}([(f,\pm\rho)])$$
is a group action from the left.

(d) The action of $G_\Z$ on $M_\mu^{mar}$ is properly discontinuous.
The quotient $M_\mu^{mar}/G_\Z$ is the moduli space $M_\mu$ from
\cite[theorem 13.5]{He6} for right equivalence classes in the 
$\mu$-homotopy class of $f_0$, with its canonical complex structure.
Especially $[(f_1,\pm\rho_1)]$ and $[(f_2,\pm\rho_2)]$ are in one
$G_\Z$-orbit if and only if $f_1$ and $f_2$ are right equivalent.

(e) If assumption \eqref{4.1} or \eqref{4.2} holds then (b), (c) and (d)
are also true for $C^{smar}(k,f_0)$, $M_\mu^{smar}$ and $\psi_{smar}$
with $\psi_{smar}([(f,\rho)]):=[(f,\psi\circ\rho)]$.
If neither \eqref{4.1} nor \eqref{4.2} holds then the quotient topology
on $C^{smar}(k,f_0)/j_k\RR$ is not Hausdorff.
\end{theorem}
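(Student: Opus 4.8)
The plan is to realize both moduli sets as quotients of covering spaces of $C(k,f_0)$ by the algebraic group $j_k\RR$, and to import the analytic structure of the unmarked moduli space $M_\mu(f_0)$ from \cite[theorem 13.15]{He6}, adding only the bookkeeping caused by the markings. For (a) I would use theorem \ref{t2.2}: the local system $Ml(F)$ on $C(k,f_0)$, with its flat data $(L,M_h,I)$, makes the forgetful map $C^{smar}(k,f_0)\to C(k,f_0)$, $(f,\rho)\mapsto j_kf$, a covering space, the fibre over $j_kf$ being the $G_\Z$-torsor of isomorphisms $ML(F_{j_kf})\to ML(f_0)$ and path lifting being analytic continuation of $\rho$. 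Hence $C^{smar}(k,f_0)$ carries a reduced complex structure locally isomorphic to $C(k,f_0)$, its topological components being governed by the image of $\pi_1(C(k,f_0),j_kf_0)$ in $G_\Z$. As $-\id$ fixes no marking, $\{\pm\id\}$ acts freely and $C^{mar}(k,f_0)=C^{smar}(k,f_0)/\{\pm\id\}$ is again such a covering. The $j_k\RR$-action on $C(k,f_0)$ lifts to both, because along a path inside one $j_k\RR$-orbit the monodromy of $Ml(F)$ is realized by the induced maps $\varphi_{hom}$; thus $j_k\varphi\colon(f,\rho)\mapsto(f\circ j_k\varphi^{-1},\rho\circ\varphi_{hom}^{-1})$ is well defined and holomorphic. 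Finite determinacy ($k\ge\mu+1$, \cite[theorem (3.5)]{Ma2}) together with lemma \ref{t3.5}(b) then identifies, just as in the unmarked case of \cite[theorem 13.15]{He6}, the sets $M_\mu^{smar}=C^{smar}(k,f_0)/j_k\RR$ and $M_\mu^{mar}=C^{mar}(k,f_0)/j_k\RR$ with the two actions; this proves (a).

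For (b) I would build on \cite[theorem 12.4 and 13.15]{He6}, where $C(k,f_0)/j_k\RR=M_\mu(f_0)$ is shown to be an analytic geometric quotient: all $j_k\RR$-orbits in $C(k,f_0)$ are closed and of the same dimension $\dim(\mmm^2/\mmm^{k+1})-(\mu-1)$, there is a transversal slice at $j_kf$ isomorphic to the $\mu$-constant stratum $S_\mu$ of a semiuniversal unfolding of $f$, and $\Stab_{j_k\RR}(j_kf)$ acts on $S_\mu$ through a finite group which by theorem \ref{t3.3}(e) is $G^{smar}_\RR(f)=\{\varphi_{hom}\mid\varphi\in R_f\}$, where $R_f:=\{\varphi\in\RR\mid f\circ\varphi=f\}$ and $j_k\varphi$ acts as $\varphi_{hom}$; so $(M_\mu,[f])\cong S_\mu/G^{smar}_\RR(f)$. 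Over $(f,\pm\rho)$ the covering $C^{mar}(k,f_0)$ is locally isomorphic to $C(k,f_0)$ near $j_kf$, so the slice theorem gives $(M_\mu^{mar},[(f,\pm\rho)])\cong S_\mu/\Gamma$, where $\Gamma$ is the image in $G^{smar}_\RR(f)$ of $\{j_k\varphi\in\Stab_{j_k\RR}(j_kf)\mid\varphi_{hom}\in\{\pm\id\}\}$, that is $\Gamma=\{\pm\id\}\cap G^{smar}_\RR(f)$. The crucial point is that $\Gamma$ acts trivially on $S_\mu$: if $\mult f\ge3$ then $-\id\notin G^{smar}_\RR(f)$ by theorem \ref{t3.3}(g), so $\Gamma=\{\id\}$; if $\mult f=2$ then $\Gamma=\{\pm\id\}$, but by the splitting lemma \cite{AGV1} $f\sim_\RR g(x_0,\dots,x_{n-1})+x_n^2$, and the involution $x_n\mapsto-x_n$ lies in $R_f$, induces $-\id$ on $Ml(f)$, and fixes a monomial basis of $\OO/(\ppp f/\ppp x_i)$ (which is supported in $x_0,\dots,x_{n-1}$), hence fixes the base of the semiuniversal unfolding and $S_\mu$ pointwise. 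Therefore $(M_\mu^{mar},[(f,\pm\rho)])\cong S_\mu$, with no quotient, and -- the orbits being closed of constant dimension -- $C^{mar}(k,f_0)/j_k\RR$ is an analytic geometric quotient; independence of $k$ and the canonical complex structure are then inherited from $M_\mu$ via the covering.

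Part (c) is formal: $\psi\in G_\Z$ acts on $C^{mar}(k,f_0)$ by $(f,\pm\rho)\mapsto(f,\pm\psi\circ\rho)$, commutes with $j_k\RR$, and descends to an automorphism $\psi_{mar}$ of $M_\mu^{mar}$ with $(\psi_1\psi_2)_{mar}=(\psi_1)_{mar}\circ(\psi_2)_{mar}$. For (d): $[(f_1,\pm\rho_1)]$ and $[(f_2,\pm\rho_2)]$ lie in one $G_\Z$-orbit if and only if $f_1\sim_\RR f_2$ (given $f_1=f_2\circ\varphi$, take $\psi=\rho_2\circ\varphi_{hom}\circ\rho_1^{-1}\in G_\Z$), so $M_\mu^{mar}/G_\Z=M_\mu$ as sets; the stabilizer of $[(f,\pm\rho)]$ equals $\rho\,G^{mar}_\RR(f)\,\rho^{-1}$, which is finite by theorem \ref{t3.3}(f), and near such a point $M_\mu^{mar}\to M_\mu$ is the finite quotient $S_\mu\to S_\mu/G^{smar}_\RR(f)$; hence the action is properly discontinuous and the quotient, with its complex structure, is the moduli space $M_\mu(f_0)$ of \cite[theorem 13.5]{He6}.

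For (e): under \eqref{4.1} no $f$ in the $\mu$-homotopy class has $-\id\in G^{smar}_\RR(f)$ (theorem \ref{t3.3}(g)), so $\{\pm\id\}$ acts freely on $C^{smar}(k,f_0)/j_k\RR$, which is therefore an étale double cover of the complex space $M_\mu^{mar}$ and inherits (b), (c), (d) (with $\Gamma$ replaced by $\{\id\}$ throughout); under \eqref{4.2} one has $(f,\rho)\sim_\RR(f,-\rho)$ for every $f$ in the class (again theorem \ref{t3.3}(g)), so $C^{smar}(k,f_0)/j_k\RR=M_\mu^{mar}$ and (b), (c), (d) follow at once. If neither \eqref{4.1} nor \eqref{4.2} holds, the set $U:=\{t\in C(k,f_0)\mid\mult F_t=2\}$ is open, non-empty and proper in the connected space $C(k,f_0)$, so there is $t^*\in\ppp U$ with $\mult F_{t^*}\ge3$ and a sequence $t_n\to t^*$ with $t_n\in U$; lifting through the covering, $(t_n,\rho_n)\to(t^*,\rho^*)$ and $(t_n,-\rho_n)\to(t^*,-\rho^*)$ in $C^{smar}(k,f_0)$, while $[(t_n,\rho_n)]=[(t_n,-\rho_n)]$ (theorem \ref{t3.3}(e),(g), since $\mult F_{t_n}=2$) and $[(t^*,\rho^*)]\neq[(t^*,-\rho^*)]$ (same, since $\mult F_{t^*}\ge3$), so one sequence in $C^{smar}(k,f_0)/j_k\RR$ has two distinct limits and the quotient topology is not Hausdorff. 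The hardest step will be (b): making the analytic slice theorem work for the non-reductive group $j_k\RR$ acting on $C(k,f_0)$ with orbits of constant dimension, and proving that the finite group $\Gamma$ acts trivially on $S_\mu$ -- i.e. that passing to marked singularities exactly kills the finite symmetry group responsible for the quotient in $M_\mu$ -- which rests on the detailed analysis of $R$-symmetries and of the canonical structure of $\mu$-constant strata carried out in \cite[ch.~12 and 13]{He6}.
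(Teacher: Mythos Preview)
Your overall strategy matches the paper's: realize $M_\mu^{mar}$ as the $j_k\RR$-quotient of a covering of $C(k,f_0)$, and show that the marking kills the finite symmetry group so that locally one obtains $S_\mu$ rather than $S_\mu/\Aut_M$. Parts (a), (c), (e) and most of (d) are essentially as in the paper.

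There is, however, a genuine gap in your argument for (b). You compute the stabilizer of $(f,\pm\rho)$ in $j_k\RR$ and observe that its image $\Gamma=\{\pm\id\}\cap G^{smar}_\RR(f)$ acts trivially on $S_\mu$; from this you conclude $(M_\mu^{mar},[(f,\pm\rho)])\cong S_\mu$. But a slice argument requires more than the stabilizer of the base point: you must show that the $j_k\RR$-orbit of a nearby slice point $(F_t,\pm\rho)$ meets the slice only at that point, i.e.\ that $(F_t,\pm\rho)\sim_\RR(F_{\tilde t},\pm\rho)$ forces $t=\tilde t$. Here the right equivalence $\varphi$ satisfies $F_t=F_{\tilde t}\circ\varphi$ and $\varphi_{hom}=\pm\id$, but $\varphi$ is \emph{not} a priori in $\RR^f$, so your stabilizer computation does not apply to it. The paper closes this gap with its Corollary~7.3, whose proof uses the existence of a \emph{very good representative} of the semiuniversal unfolding (Definition~7.1, Theorem~7.2): property~(ii) forces the induced germ isomorphism $\varphi_M:(M,t)\to(M,\tilde t)$ to extend to an element of $\Aut_M$, and property~(iii) lets one lift $\varphi_M$ to an unfolding automorphism $\Phi$ with $\Phi_0\in\RR^f$; comparing $\varphi$ with $\Phi_t$ and using Theorem~6.1(h) then gives $(\Phi_0)_{hom}=\pm\varphi_{hom}=\pm\id$, hence $\varphi_M=\id$ and $\tilde t=t$. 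This is strictly stronger than ``$\Gamma$ acts trivially on $S_\mu$''.

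The same Corollary~7.3, together with Theorem~7.2(b) (separating non-right-equivalent singularities), is what the paper uses to verify the two criteria in Holmann's theorem \cite[Satz~17]{Ho} for the analytic geometric quotient. Your clause ``the orbits being closed of constant dimension'' is not by itself sufficient for a non-reductive group such as $j_k\RR$; the Hausdorff property of $C^{mar}(k,f_0)/j_k\RR$ genuinely needs the input above. Your final paragraph correctly flags (b) as the hard step resting on \cite[ch.~12--13]{He6}, but the specific statement you isolate (trivial action of $\Gamma$) is not the one that does the work.
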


\begin{theorem}\label{t4.4}
Consider the same data as in theorem \ref{t4.3}.

(a) Let $(M_\mu^{mar})^0$ be the topological component of $M_\mu^{mar}$
(with its reduced complex structure) which contains $[(f_0,\pm\id)]$. Then
$$G^{mar}(f_0)=\{\psi\in G_\Z\, |\, \psi\textup{ maps }(M_\mu^{mar})^0
\textup{ to itself}\},$$
and the map
\begin{eqnarray*}
G_\Z/G^{mar}(f_0)&\to& \{\textup{topological components of }M_\mu^{mar}\}\\
\psi\cdot G^{mar}(f_0)&\mapsto& (\textup{the component }\psi_{mar}((M_\mu^{mar})^0)
\end{eqnarray*}
is a bijection.

(b) If assumption \eqref{4.1} or \eqref{4.2} holds then (a) is also true
for $M_\mu^{smar}$ and $G^{smar}(f_0)$.

(c) For any $[(f,\pm\rho)]\in M_\mu^{mar}$
$$\Stab_{G_\Z}([(f,\pm\rho)]) = \rho\circ G^{mar}_\RR(f)\circ \rho^{-1}$$
(this does not use theorem \ref{t4.3} (b)-(d)).

(d) For any $[(f,\rho)]\in M_\mu^{smar}$
\begin{eqnarray*}
\Stab_{G_\Z}([(f,\rho)]) &=& \rho\circ G^{smar}_\RR(f)\circ \rho^{-1}\\
&=& \rho\circ
\{\varphi_{hom}\, |\, \varphi\in\RR\textup{ with }f=f\circ\varphi\}\circ \rho^{-1}
\end{eqnarray*}
(this does not require assumption \eqref{4.1} or \eqref{4.2}, and it does not 
use theorem \ref{t4.3} (e)).

(e) $-\id\in G_\Z$ acts trivially on $M_\mu^{mar}(f_0)$. 
Suppose that assumption \eqref{4.2} holds and that $f_0=g_0(x_0,...,x_{n-1})+x_n^2$.
Then $-\id$ acts trivially on $M_\mu^{smar}(f_0)$ and 
\begin{eqnarray*}
\begin{matrix}
M_\mu^{smar}(f_0)&=&M_\mu^{mar}(f_0)&=&M_\mu^{mar}(g_0),\\
G^{mar}(f_0)&=&G^{smar}(f_0)&=&G^{mar}(g_0).\end{matrix}
\end{eqnarray*}
Suppose additionally that assumption \eqref{4.1} holds for $g_0$
(instead of $f_0$ in \eqref{4.1}). Then $\{\pm\id\}$ acts freely on $M_\mu^{smar}(g_0)$,
and the quotient map
$$M_\mu^{smar}(g_0) \stackrel{/\{\pm\id\}}{\longrightarrow}
M_\mu^{mar}(g_0),\quad [(f,\rho)]\mapsto [(f,\pm\rho)]$$
is a double covering.
\end{theorem}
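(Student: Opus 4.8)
The plan is to derive all of (a)--(e) from Theorem~\ref{t4.3} (the set-theoretic and analytic description of $M_\mu^{mar}$ and $M_\mu^{smar}$), from Lemma~\ref{t3.5}~(c), and from Theorem~\ref{t3.3}. The geometric backbone of (a) and (b) is this: over the connected quasiaffine variety $C(k,f_0)$ the local system $ML(F)$ of Theorem~\ref{t2.2} makes the forgetful maps $C^{smar}(k,f_0)\to C(k,f_0)$ and $C^{mar}(k,f_0)\to C(k,f_0)$ into covering maps (the fibre over $f_0$ being a torsor under $G_\Z$, resp.\ under $G_\Z/\{\pm\id\}$), whose monodromy at the base point $j_kf_0=f_0$ is, by Lemma~\ref{t3.5}~(c) (where the identification $B$ is the identity since $j_kf_0=f_0$), exactly $G^{smar}(f_0)$, resp.\ $G^{mar}(f_0)$. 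Since $j_k\RR$ is a connected group, the quotient maps to $M_\mu^{smar}$ and $M_\mu^{mar}$ are open with connected fibres, hence induce bijections on connected components; thus $(M_\mu^{mar})^0$ is the image of the component $C^{mar,0}$ of $C^{mar}(k,f_0)$ through $(f_0,\pm\id)$.

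For (a): $\psi_{mar}$ maps $(M_\mu^{mar})^0$ to itself iff $[(f_0,\pm\psi)]=\psi_{mar}[(f_0,\pm\id)]$ lies in it, iff $(f_0,\pm\psi)\in C^{mar,0}$, iff (by the covering description) $\pm\psi=\pm g$ for the monodromy $g$ of some loop in $C(k,f_0)$ at $f_0$, iff $\psi\in G^{smar}(f_0)$ or $-\psi\in G^{smar}(f_0)$, iff $\psi\in G^{mar}(f_0)$. For the stated bijection I would use that $G_\Z$ acts transitively on the set of components: $C(k,f_0)$ is connected, so every point of $C^{mar}(k,f_0)$ can be joined to a point $(f_0,\pm\rho)$ over $f_0$, whose class is $\rho_{mar}[(f_0,\pm\id)]$, so each component of $M_\mu^{mar}$ is a $G_\Z$-translate of $(M_\mu^{mar})^0$; the orbit--stabiliser principle then identifies components with $G_\Z/G^{mar}(f_0)$. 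Part (b) repeats this with $C^{smar}$, $M_\mu^{smar}$, $G^{smar}(f_0)$, invoking that under \eqref{4.1} or \eqref{4.2} the space $M_\mu^{smar}$ is Hausdorff and Theorem~\ref{t4.3}~(e) applies.

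Parts (c) and (d) are computations with the definition of right equivalence of (strongly) marked singularities: $\psi\in\Stab_{G_\Z}([(f,\rho)])$ iff there is $\varphi\in\RR$ with $f=f\circ\varphi$ and $\psi=\rho\circ\varphi_{hom}\circ\rho^{-1}$, and for $[(f,\pm\rho)]$ the same up to an extra sign. So both parts reduce to the identity $G^{smar}_\RR(f)=\{\varphi_{hom}\in G_\Z(f)\mid\varphi\in\RR,\ f=f\circ\varphi\}$ (this is also Theorem~\ref{t3.3}~(e); the right-hand set is a subgroup by functoriality of $\varphi\mapsto\varphi_{hom}$). For "$\supseteq$" I would, given $\varphi$ with $f=f\circ\varphi$, pick a smooth path $\varphi_s$ in the path-connected group $\RR$ from $\id$ to $\varphi$, constant near the endpoints, and glue $f\circ\varphi_s$ to a $C^\infty$ $\mu$-constant family over $S^1$ all of whose members are $\sim_\RR f$; its monodromy is $\varphi_{hom}^{\pm1}$, so $\varphi_{hom}\in G^{smar}_\RR(f)$. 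For "$\subseteq$", by Remark~\ref{t3.6}~(i) it suffices to treat a family over $S^1$ with $E_{t_0}=f$ and all $E_t\sim_\RR f$; such a family lies in a single $\RR$-orbit, so over the arc $S^1\setminus\{t_0\}$ it admits a smooth lift $\psi_t$ with $E_t=f\circ\psi_t$, and the induced isomorphisms $(\psi_t)_{hom}$ of Milnor lattices form a flat trivialisation (a smooth family of diffeomorphisms of Milnor fibres extends to an isomorphism of Milnor fibrations). Comparing the two limiting lifts at $t_0$, each satisfying $f\circ\psi=E_{t_0}=f$, shows the monodromy equals $\varphi_{hom}$ for some $\varphi$ with $f\circ\varphi=f$; since the right-hand set is a group, the identity follows.

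Finally (e): $-\id$ acts trivially on $M_\mu^{mar}(f_0)$ because $(f,\pm(-\rho))=(f,\pm\rho)$; under \eqref{4.2}, Theorem~\ref{t3.3}~(g) gives $-\id\in G^{smar}_\RR(f)$ for every $f$ in the class, hence $(f,\rho)\sim_\RR(f,-\rho)$, so $-\id$ acts trivially on $M_\mu^{smar}(f_0)$ and the natural bijection $M_\mu^{smar}(f_0)\to M_\mu^{mar}(f_0)$ is an isomorphism (both sides are locally $\mu$-constant strata, Theorem~\ref{t4.3}~(b),(e)). The identification $M_\mu^{mar}(f_0)=M_\mu^{mar}(g_0)$ for $f_0=g_0+x_n^2$ comes from suspension: under \eqref{4.2} every member of the $\mu$-homotopy class of $f_0$ has multiplicity $2$, so by a one-variable parametrised splitting lemma every $C^\infty$ $\mu$-constant family of such germs is, after a smooth family of coordinate changes, of the form $(\text{family in }x_0,\dots,x_{n-1})+x_n^2$; together with the splitting lemma $f_1+x_n^2\sim_\RR f_2+x_n^2\iff f_1\sim_\RR f_2$ and the sign-canonical isomorphism $Ml(h)\cong Ml(h+x_n^2)$, this produces a $G_\Z$-equivariant isomorphism of moduli spaces carrying $[(g_0,\pm\id)]$ to $[(f_0,\pm\id)]$, whence $G^{mar}(f_0)=G^{mar}(g_0)$ by part (a), while $G^{mar}(f_0)=G^{smar}(f_0)$ by Theorem~\ref{t3.3}~(b). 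For the last assertion, \eqref{4.1} for $g_0$ forces $\mult h\geq3$, so $-\id\notin G^{smar}_\RR(h)$ by Theorem~\ref{t3.3}~(g); by part (d) the involution $(-\id)_{smar}$ has no fixed point, so $\{\pm\id\}$ acts freely, and since $G_\Z$ (hence $\{\pm\id\}$) acts properly discontinuously on the Hausdorff space $M_\mu^{smar}(g_0)$, the quotient map onto $M_\mu^{smar}(g_0)/\{\pm\id\}=M_\mu^{mar}(g_0)$ is a double covering. The step I expect to be the main obstacle is precisely this suspension identification: upgrading the pointwise splitting lemma to whole $\mu$-constant families (so that the $\mu$-homotopy classes of $g_0$ and $f_0$ genuinely correspond) and checking compatibility with the canonical complex structures and the $G_\Z$-actions; by contrast, in (a)--(d) the only real analytic inputs are the flatness of the trivialisation induced by a smooth family of diffeomorphisms of Milnor fibres and the local triviality of a $\mu$-constant family lying in one $\RR$-orbit, the rest being bookkeeping with Theorem~\ref{t4.3}, Lemma~\ref{t3.5}~(c) and Theorem~\ref{t3.3}.
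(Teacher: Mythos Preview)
Your proof is correct and matches the paper's for (a), (b) and (e). The one structural difference is in (c)--(d): the paper proves the first equality in (d) by the same covering argument as in (a), now restricted to the $j_k\RR$-orbit of $j_kf$ inside $C^{smar}(k,f_0)$ and using Lemma~\ref{t3.5}~(c), and obtains the second equality straight from the definition of $\sim_\RR$ for strongly marked singularities; Theorem~\ref{t3.3}~(e) then drops out as a \emph{corollary} of (d). You reverse this order, reducing (c) and (d) to the definition together with Theorem~\ref{t3.3}~(e), and then prove the latter directly. Your direct argument is fine, but the lifting step (``over the arc $S^1\setminus\{t_0\}$ it admits a smooth lift $\psi_t$'') tacitly uses that $j_k\RR\to j_k\RR\cdot j_kf$ is a fibre bundle, hence has path-lifting over an interval, which is exactly what Lemma~\ref{t3.5}~(c) packages; so the two proofs are equivalent, just run in opposite order. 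In (e) the paper gets bijectivity of $M_\mu^{mar}(g_0)\to M_\mu^{mar}(f_0)$ from the pointwise splitting lemma (surjectivity) and part~(c) combined with Theorem~\ref{t3.3}~(h) (injectivity); the parametrised splitting lemma you single out is needed only to make the $\mu$-homotopy classes of $g_0$ and $f_0$ correspond, a point the paper leaves implicit.
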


\begin{proof}
(a) $C(k,f_0)$ is the base space for the holomorphic $\mu$-constant family
$(C(k,f_0),\C^{n+1}\times C(k,f_0),F)$ considered in lemma \ref{t3.5} (a).
By lemma \ref{t3.5} (c) and $f_0=j_kf_0$
$$G^{mar}(f_0) = \{\pm \psi\, |\, \psi\in G(F,f_0)\}.$$
The space $C^{mar}(k,f_0)$ contains the set $\{f_0\}\times G_\Z/\{\pm\id\}$.
The component $(C^{mar}(k,f_0))^0$ of $C^{mar}(k,f_0)$ which contains 
$(f_0,\pm\id)$ intersects $\{f_0\}\times G_\Z/\{\pm\id\}$ precisely in the set
$\{f_0\}\times G^{mar}/\{\pm\id\}$.

The group $G_\Z$ acts on $C^{mar}(k,f_0)$ by $\psi:(f,\pm\rho)\mapsto (f,\pm\psi\circ\rho)$
for $\psi\in G_\Z$.
The subgroup which maps the component $(C^{mar}(k,f_0))^0$ to itself is $G^{mar}(f_0)$.
The quotient map $C^{mar}(k,f_0)\to M_\mu^{mar}$ is $G_\Z$-equivariant.
As $j_k\RR$ is connected, the component $(M_\mu^{mar}(f_0))^0$ is the quotient
of the component $(C^{mar}(k,f_0))^0$. Therefore $G^{mar}(f_0)$ is the subgroup
which maps $(M_\mu^{mar}(f_0))^0$ to itself.
The bijective correspondence is clear.

(b) Similar to (a).

(c) Similar to (a). Instead of $C^{mar}(k,f_0)$ one considers the (smooth) analytic
subvariety
$$\{(g,\pm\sigma)\, |\, g\in C(k,f_0), g\sim_\RR f, \sigma:ML(g)\to ML(f_0)
\textup{ an isomorphism}\}.$$
The action of $j_k\RR$ on $C^{mar}(k,f_0)$ 
restricts to a transitive action on each component of this subvariety.
The components are not permuted as $j_k\RR$ is connected.
The components are mapped to different points in $M_\mu^{mar}$.

The component which contains $(f,\pm\rho)$ intersects 
$\{f\}\times (G_\Z/\{\pm\id\})\circ \rho$
precisely in $\{f\}\times \rho\circ(G^{mar}_\RR(f)/\{\pm\id\})$,
because of lemma \ref{t3.5} (c). Therefore the subgroup which maps this 
component to itself is $\rho\circ G^{mar}_\RR(f)\circ\rho^{-1}$.
As the quotient map is $G_\Z$-equivariant, this subgroup coincides with
$\Stab_{G_\Z}([(f,\pm\rho)])$.

(d) The first equality follows as in (c). The equality of the first and the third
term follows immediately
from the definition of right equivalence classes of strongly marked singularities.

(e) By assumption \eqref{4.2} any $[(f,\rho)]\in M_\mu^{smar}(f_0)$ satisfies
$\mult f=2$, so $(f,\rho)\sim_\RR (f,-\rho)$ by part (d) and theorem \ref{t3.3} (g).
Therefore $-\id$ acts trivially on $M_\mu^{smar}(f_0)$, 
and $M_\mu^{smar}(f_0)= M_\mu^{mar}(f_0)$.
Theorem \ref{t3.3} (b) says $G^{smar}(f_0)=G^{mar}(f_0)$.
With the two isomorphisms
$$Ml(g)\to Ml(g)\otimes Ml(x_n^2) =Ml(g+x_n^2)$$
which just differ by a sign, the map
$$M_\mu^{mar}(g_0)\to M_\mu^{mar}(f_0),\quad 
[(g,\pm\sigma)]\mapsto [(g+x_n^2,\pm \sigma\otimes\id)]$$
is well defined. It is surjective because of the splitting lemma 
and assumption \eqref{4.2} for $f_0$. 
It is bijective because of part (c) and theorem \ref{t3.3} (h).
Part (a) and $M_\mu^{mar}(f_0)=M_\mu^{mar}(g_0)$ show $G^{mar}(f_0)=G^{mar}(g_0)$.

By assumption \eqref{4.1} for $g_0$, $\mult g\geq 3$ 
for any $(g,\sigma)\in M_\mu^{smar}(g_0)$, so $(g,\sigma)\not\sim_\RR (g,-\sigma)$
by part (d) and theorem \ref{t3.3} (g). Therefore $\pm\id$ acts freely on $M_\mu^{smar}(g_0)$
and the map $M_\mu^{smar}(g_0)\to M_\mu^{mar}(g_0)$ is a double covering.
\end{proof}

\begin{remarks}\label{t4.5}
(i) Theorem \ref{t4.4} (a) shows that conjecture \ref{t3.2} (a) is equivalent to the 
connectedness of $M_\mu^{mar}(f_0)$. Theorem \ref{t4.4} (b) shows that 
conjecture \ref{t3.2} (b) is equivalent to $[(f,\rho)]$ and $[(f,-\rho)]$
being in different components of $M_\mu^{smar}(f_0)$ if assumption \eqref{4.1} holds.
Together the conjectures say that under assumption \eqref{4.1} $M_\mu^{smar}(f_0)$
has two components, each isomorphic to $M_\mu^{mar}(f_0)$, and that they are 
permuted by $-\id$.

(ii) Theorem \eqref{t4.4} (a) and (b) use that $M_\mu^{mar}(f_0)$ 
respectively $M_\mu^{smar}(f_0)$ is an analytic quotient. But Theorem \ref{t4.4} (c)
and (d) use only the (trivial) equality $C^{mar}(k,f_0)/j_k\RR = M_\mu^{mar}$ as sets
(respectively for smar). In part (e) only the equality $G^{mar}(f_0)=G^{mar}(g_0)$
uses that $M_\mu^{mar}(f_0)$ and $M_\mu^{mar}(g_0)$ are analytic geometric
quotients. The other statements can be understood and proved without this.
\end{remarks}

\section{Period maps and Torelli type problems}\label{c5}
\setcounter{equation}{0}

\noindent
In \cite{He1} I had defined an analytic invariant $LBL(f)$ of the right equivalence class
of a singularity $f$ and had formulated the Torelli type conjecture that $LBL(f)$
determines $f$ up to right equivalence.
I worked on it in \cite{He1}--\cite{He6}. It is reformulated in conjecture \ref{t5.4}.
Using $M_\mu^{mar}$, now a stronger conjecture for marked singularities can be proposed,
conjecture \ref{t5.3}. 

First, the invariant $LBL(f)$ will be described, but with the minimum of details
necessary to appreciate it. More detailed accounts can be found in 
\cite{He1}--\cite[ch. 10]{He6}. It builds on the Brieskorn lattice and the 
Gauss-Manin connection, which had been studied in many ways,
e.g. \cite{Ma1}\cite{Va}\cite[III]{AGV2}\cite{SchS}\cite{Sa1}\cite{Sa2}\cite{Ku}.

Fix a singularity $f\in\mmm_{\C^{n+1},0}^2$ and a Milnor fibration
$f:Y'(\varepsilon,\delta)\to T_\delta'$ for it as in section \ref{c2}.
The cohomology bundle
$$H^n_\C :=\bigcup_{\tau\in T_\delta'}H^n(f^{-1}(\tau),\C) 
\supset H^n_\Z :=\bigcup_{\tau\in T_\delta'}H^n(f^{-1}(\tau),\Z) $$
is a flat vector bundle of rank $\mu$ and contains a flat $\Z$-lattice bundle.
Denote by $H^\infty\supset H^\infty_\Z$ the spaces of global flat multivalued sections
in $H^n_\C$ respectively in $H^n_\Z$.

A holomorphic section can be written as a linear combination of a basis of 
$H^\infty_\C$ (or $H^\infty_\Z$) with multivalued holomorphic coefficients.
A section in a punctured neighborhood of $0$ is said to have moderate growth
respectively vanishing growth if these coefficients have moderate growth
respectively vanishing growth. The spaces of germs at 0 of such sections 
are denoted $V^{>-\iiii}$ and $V^{>0}$. Write also $V^{>\alpha}:=\tau^\alpha V^{>0}$
for $\alpha\in\Z$. Then $V^{>-\iiii}$ is a $\C\{\tau\}[\tau^{-1}]$-vector space
of dimension $\mu$ and $V^{>\alpha}$ is a free $\C\{\tau\}$-module of rank $\mu$
in it with $V^{>\alpha}\otimes_{\C\{\tau\}}\C\{\tau\}[\tau^{-1}]=V^{>-\iiii}$.

The Brieskorn lattice $H_0''(f)$ is a free $\C\{\tau\}$-module of rank $\mu$
whose (germs of) sections
come from differential forms as follows:
For $\omega\in\Omega^{n+1}_{Y(\varepsilon,\delta)}$ the section 
$s[\omega]$ on $T_\delta'$ with value 
$$s[\omega](\tau) := \left[{\frac{\omega}{\ddd f}}_{|f^{-1}(\tau)}\right]\in H^n(f^{-1}(\tau),\C)$$
is holomorphic, its germ $s[\omega]_0$ at $0$ turns out to be in $V^{>-1}$
\cite{Ma1}. $H_0''(f)$ is generated by such germs. 
Therefore $V^{>-1}\supset H_0''(f)$. Also $H_0''(f)\supset V^{>n-1}$ holds.

The Brieskorn lattice is a rich invariant. 
It induces a (sum of two) polarized mixed Hodge structure(s) on $H^\iiii\supset H^\iiii_\Z$
(the mixed Hodge structure: \cite{Va}\cite{SchS}\cite{Sa1}, 
its polarization: \cite{He4}, but see \cite[remark 10.25]{He6} for a sign mistake in
\cite{He4}).

Any fiber $H^n_{\Z,\tau}$ for $\tau>0$ is canonically isomorphic to the dual 
of the Milnor lattice $Ml(f)$. Therefore the Milnor lattice $Ml(f)$ and its monodromy
$M_h$ determine uniquely $H^n_\Z$, $H^\iiii_\Z$, $V^{>-\iiii}$, $V^{>\alpha}$.
Any automorphism of $(Ml(f),M_h)$ induces automorphisms of
$H^n_\Z$, $H^\iiii_\Z$, $V^{>-\iiii}$, $V^{>\alpha}$, which will be denoted by the same letter
as the original automorphism (here $\psi\in\Aut(Ml(f),M_h)$ induces
$\psi(\gamma):=\gamma\circ \psi^{-1}$ for $\gamma\in H^n_{\Z,\tau}$).
Like $Ml(f)$, the germ at $0$ of the bundle $H^n_\Z$ and the spaces 
$H^\iiii_\Z$, $V^{>-\iiii}$, $V^{>\alpha}$, $H_0''(f)$ are independent of the choice
of the Milnor fibration. The tuple 
$$(ML(f),\textup{ the germ at 0 of }H^n_\Z,V^{>-\iiii},H_0''(f))$$
will be abbreviated $(ML(f), V^{>-\iiii},H_0''(f))$. It is a datum of the germ
$f\in\mmm_{\C^{n+1},0}^2$ (and not of some special representative).
The invariant $LBL(f)$ is the isomorphism class of 
$(ML(f),V^{>-\iiii},H_0''(f))$. It is a datum of the right equivalence class of $f$.

The group 
$$\Stab_{G_\Z(f)} (H_0''(f)) := \Aut(ML(f),V^{>-\iiii},H_0''(f))\subset G_\Z(f)$$
is finite because of the polarized mixed Hodge structure on $H^\iiii\supset H^\iiii_\Z$
\cite{He4}.
If $\varphi\in\RR$ with $f=f\circ\varphi$ then $\varphi_{hom}\in \Stab_{G_\Z(f)}(H_0''(f))$,
because $H_0''(f)$ is defined by the geometry and is independent of the choice
of coordinates. Therefore (and because of theorem \ref{t3.3} (e))
$$G^{mar}_\RR(f)\subset \Stab_{G_\Z(f)}(H_0''(f)),$$
and $G^{mar}_\RR(f)$ is also finite. Conjecture \ref{t5.1} for $G^{mar}_\RR(f)$
is similar to conjecture \ref{t3.2} (a) for $G^{mar}(f)$.

\begin{conjecture}\label{t5.1}
\cite[conjecture 13.12]{He6} 
$$G^{mar}_\RR(f) = \Stab_{G_\Z(f)}(H_0''(f)).$$
\end{conjecture}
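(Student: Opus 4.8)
Proof plan for Conjecture \ref{t5.1} (noting that this is stated as a conjecture, so I sketch the approach one would take and isolate the obstruction, rather than claiming a complete argument).

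The plan is to attack the reverse inclusion, since $G^{mar}_\RR(f)\subset \Stab_{G_\Z(f)}(H_0''(f))$ is already established in the excerpt. So fix $\psi\in \Stab_{G_\Z(f)}(H_0''(f))$, meaning $\psi$ is an automorphism of $(Ml(f),L)$ that additionally preserves $V^{>-\iiii}$ and the Brieskorn lattice $H_0''(f)$ inside $V^{>-1}$. The goal is to realize $\psi$ as $\varphi_{hom}$ for some coordinate change $\varphi\in\RR$ with $f=f\circ\varphi$; by theorem \ref{t3.3} (e) this is exactly $G^{smar}_\RR(f)$, and then taking $\pm$ gives $G^{mar}_\RR(f)$. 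The natural strategy is to pass through the moduli space and the period map: by theorem \ref{t4.4} (c)--(d), $\rho\circ G^{smar}_\RR(f)\circ\rho^{-1} = \Stab_{G_\Z}([(f,\rho)])$ in $M_\mu^{smar}(f_0)$, so for $f=f_0$, $\rho=\id$, one has $G^{smar}_\RR(f_0)=\Stab_{G_\Z}([(f_0,\id)])$. Hence the conjecture reduces to the statement that $\psi$ fixing $(ML(f_0),V^{>-\iiii},H_0''(f_0))$ implies $\psi$ fixes the point $[(f_0,\id)]\in M_\mu^{smar}(f_0)$.

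First I would set up the period map $BL: M_\mu^{mar}(f_0)\to D_{BL}(f_0)$ (from section \ref{c5}, $G_\Z$-equivariant, and by \cite[theorem 12.8]{He6} an immersion). The point $BL([(f_0,\pm\id)])\in D_{BL}(f_0)$ is precisely the isomorphism class of the tuple $(ML(f_0),V^{>-\iiii},H_0''(f_0))$ relative to the marking. The hypothesis $\psi\in\Stab_{G_\Z(f_0)}(H_0''(f_0))$ says exactly that $\psi_{mar}$ fixes $BL([(f_0,\pm\id)])$, i.e. $\psi\in\Stab_{G_\Z}(BL([(f_0,\pm\id)]))$. So the chain of implications I want is
$$\psi\in\Stab_{G_\Z}(BL([(f_0,\pm\id)])) \;\Longrightarrow\; \psi\in\Stab_{G_\Z}([(f_0,\pm\id)]) \;\Longrightarrow\; \pm\psi\in G^{mar}_\RR(f_0).$$
The second implication is theorem \ref{t4.4} (c) together with the refinement to strongly marked singularities; the first implication is precisely the stabilizer form of the global Torelli conjecture, \cite[conjecture 13.12]{He6}, which the excerpt records as equivalent to conjecture \ref{t5.3}. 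So the honest statement is: conjecture \ref{t5.1} follows from conjecture \ref{t5.3} (equivalently, from the injectivity of $BL$ on $G_\Z$-orbits, i.e. from $LBL$ determining the right equivalence class). The main obstacle is thus exactly the Torelli problem itself: without knowing $BL$ is injective on the relevant orbit, one cannot upgrade ``$\psi$ fixes the Brieskorn lattice data'' to ``$\psi$ fixes the right-equivalence-class point,'' since a priori $\psi$ could carry $[(f_0,\id)]$ to a different strongly marked singularity $[(g,\sigma)]$ in the same fiber of $BL$.

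For the cases where one can prove conjecture \ref{t5.1} unconditionally, the plan is instead to use the explicit classification: for the simple, simple-elliptic, hyperbolic, and the exceptional unimodal and bimodal families, both $G_\Z(f)$ and $\Stab_{G_\Z(f)}(H_0''(f))$ are computed directly (the mixed Hodge structure and its polarization cut the latter down to a small finite group), and $G^{mar}_\RR(f)$ is computed via theorem \ref{t3.3} (e) as the symmetry group of $f$ acting on $Ml(f)$ — then one checks the two finite groups coincide by inspection. The hard part in the general case remains the Torelli-type injectivity; the hard part in the classification cases is organizing the computation of $\Stab_{G_\Z(f)}(H_0''(f))$ from the explicit Brieskorn lattice data, which is done in \cite[ch. 13]{He6} and reviewed in section \ref{c6}.
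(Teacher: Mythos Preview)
Your reduction of conjecture \ref{t5.1} to conjecture \ref{t5.3} is correct and matches what the paper does in lemma \ref{t5.5}: the $G_\Z$-equivariance of $BL$ means that injectivity of $BL$ forces $\Stab_{G_\Z}([(f_0,\pm\id)])=\Stab_{G_\Z}(BL([(f_0,\pm\id)]))$, and the right-hand side is $\Stab_{G_\Z}(H_0''(f_0))$ while the left-hand side is $G^{mar}_\RR(f_0)$ by theorem \ref{t4.4} (c). One wrinkle: when you write that ``the first implication is precisely the stabilizer form of the global Torelli conjecture, \cite[conjecture 13.12]{He6},'' note that \cite[conjecture 13.12]{He6} \emph{is} conjecture \ref{t5.1} itself (see the citation in the statement), so invoking it at that point is circular. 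You recover by then reducing to \ref{t5.3}, which is fine; just be aware that the paper records \ref{t5.3} $\iff$ \ref{t5.1} $+$ \ref{t5.4}, not \ref{t5.1} $\iff$ \ref{t5.3}.

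For the unconditional cases your proposed route --- compute $\Stab_{G_\Z}(H_0''(f))$ from the polarized mixed Hodge structure, compute $G^{mar}_\RR(f)$ from the symmetry group via theorem \ref{t3.3} (e), then compare --- is valid but not what the paper does. In section \ref{c8} the paper instead computes the \emph{entire} group $G_\Z(f)$: for the singularities in theorem \ref{t8.3} (a) the Milnor lattice is cyclic over $\Z[M_h]$ with simple eigenvalues, and lemma \ref{t8.2} (a number-theoretic argument about unit roots, via lemma \ref{t8.1}) forces $G_\Z=\{\pm M_h^k\}$. Since $M_h\in G^{smar}_\RR(f)$ for quasihomogeneous $f$ (theorem \ref{t3.3} (c)), the chain $\{\pm M_h^k\}\subset G^{mar}_\RR(f)\subset \Stab_{G_\Z}(H_0''(f))\subset G_\Z=\{\pm M_h^k\}$ collapses. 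This buys more than your approach: it simultaneously proves conjecture \ref{t3.2} (a), and it avoids any direct analysis of the Brieskorn lattice stabilizer. Your approach would also work in principle but requires controlling $\Stab_{G_\Z}(H_0''(f))$ from the Hodge data, which is harder than bounding $G_\Z$ from above when the lattice is cyclic.
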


See theorem \ref{t5.6} for some cases in which it holds.

In a $\mu$-constant family $(X,Y,F)$ as in definition \ref{t2.1}, locally the Milnor lattices
with Seifert forms $ML(F_t)$ are canonically isomorphic (theorem \ref{t2.2}).
Therefore also the germs of bundles $H^n_\Z(F_t)$ and the spaces 
$H^\iiii_\Z(F_t)$, $V^{>-\iiii}(F_t)$, $V^{>\alpha}(F_t)$ are canonically isomorphic.
But the Brieskorn lattices vary holomorphically.

Now fix one singularity $f_0\in \mmm_{\C^{n+1},0}^2$. In \cite{He4} a classifying space
$D_{BL}(f_0)$ for $\C\{\tau\}$-lattices in $V^{>-\iiii}(f_0)$ which have many properties
of Brieskorn lattices was constructed, a {\it classifying space for Brieskorn lattices}.
It is a complex manifold. 

Let $(f,\pm\rho)$ be a marked singularity, so $f$ is in the $\mu$-homotopy class of $f_0$
and $\rho:ML(f)\to ML(f_0)$ is an isomorphism. Then $\rho$ induces an isomorphism
$\rho:V^{>-\iiii}(f)\to V^{>-\iiii}(f_0)$, and $\rho(H_0''(f))$ is a point in $D_{BL}(f_0)$.
One obtains a period map
$$BL:M_\mu^{mar}(f_0)\to D_{BL}(f_0),\quad [(f,\pm\rho)]\mapsto \rho(H_0''(f)).$$
Locally, $M_\mu^{mar}(f_0)$ is isomorphic to a $\mu$-constant stratum (theorem \ref{t4.3} (b)).
Locally, this period map had been studied in \cite{Sa1}\cite{Sa2}\cite{He1}--\cite{He6},
and it is holomorphic, so $BL$ is holomorphic.

\begin{theorem}\label{t5.2}
\cite[theorem 12.8]{He6} $BL$ is an immersion, here the reduced complex structure
on $M_\mu^{mar}(f_0)$ is considered.
\end{theorem}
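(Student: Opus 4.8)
Because being an immersion is a property local on the source, it suffices to prove that for every point $[(f,\pm\rho)]\in M_\mu^{mar}(f_0)$ the germ of $BL$ at this point is an immersion of complex space germs, the source carrying its reduced structure. The plan is to identify this germ with the \emph{local} period map attached to the single singularity $f$, and then to quote the infinitesimal Torelli theorem \cite[theorem 12.8]{He6}.

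\textbf{Step 1: reduction to one singularity.} By theorem \ref{t4.3} (b) the germ $(M_\mu^{mar}(f_0),[(f,\pm\rho)])$ is isomorphic to the germ of the $\mu$-constant stratum $S_\mu(f)$ in a semiuniversal unfolding of $f$, with the canonical complex structure of \cite[theorem 12.4]{He6}; taking the reduction on both sides preserves this isomorphism. Along $S_\mu(f)$ the canonical isomorphisms of theorem \ref{t2.2} identify all $V^{>-\iiii}(g)$ for $g\in S_\mu(f)$ with $V^{>-\iiii}(f)$, while the Brieskorn lattices $H_0''(g)$ vary holomorphically inside $V^{>-\iiii}(f)$; the resulting map $S_\mu(f)\to D_{BL}(f)$ is the local period map studied in \cite{Sa1}\cite{Sa2}\cite{He1}--\cite[ch. 12]{He6}.

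\textbf{Step 2: transport by the marking.} The marking $\rho\colon ML(f)\to ML(f_0)$ induces an isomorphism $\rho\colon V^{>-\iiii}(f)\to V^{>-\iiii}(f_0)$ which respects $M_h$, the Seifert form $L$ and the $V$-filtration. Hence it carries the $\C\{\tau\}$-lattices singled out in the construction of $D_{BL}$ to such lattices and defines a biholomorphism $D_{BL}(f)\to D_{BL}(f_0)$ of complex manifolds. By the definition of $BL$ in section \ref{c5} this biholomorphism intertwines the local period map of Step 1 with the germ of $BL$ at $[(f,\pm\rho)]$ under the identification of Step 1. Therefore the germ of $BL$ is an immersion if and only if the germ of the local period map of $f$ is one; and the latter is precisely \cite[theorem 12.8]{He6}, in which $S_\mu(f)$ carries the same reduced structure.

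The one point needing care is the compatibility asserted in Step 2: that the defining conditions of $D_{BL}(f_0)$ inside the relevant flag space of $V^{>-\iiii}(f_0)$ are transported correctly by $\rho$, and that the holomorphic family of Brieskorn lattices over $S_\mu(f)$ is the one underlying the construction of the classifying space. Both hold because those conditions are phrased solely in terms of $M_h$, $L$ and the $V$-filtration, all of which $\rho$ preserves, so this is bookkeeping rather than a real obstacle. The genuine analytic input, namely the injectivity of the differential of the local period map, is imported wholesale from \cite[theorem 12.8]{He6}.
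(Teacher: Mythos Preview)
Your proposal is correct and matches the paper's own treatment: the paper gives no separate proof of theorem \ref{t5.2} but simply cites \cite[theorem 12.8]{He6}, having already noted in the preceding paragraph that $M_\mu^{mar}(f_0)$ is locally a $\mu$-constant stratum (theorem \ref{t4.3} (b)) and that the period map is, locally, the one studied in \cite{Sa1}\cite{Sa2}\cite{He1}--\cite{He6}. Your two steps just make this reduction explicit.
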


This improves a slightly weaker result (finite-to-one) in \cite{Sa2}.
It is an infinitesimal Torelli type result. 
I have some evidence that $BL$ is also an immersion with the canonical complex
structure on $M_\mu^{mar}(f_0)$. But if true, this will be subject of another paper.

The following is a global Torelli type conjecture for marked singularities.

\begin{conjecture}\label{t5.3}
(New) $BL$ is injective, here the canonical complex structure
on $M_\mu^{mar}(f_0)$ is considered.
\end{conjecture}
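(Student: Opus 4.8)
The plan is to reduce Conjecture~\ref{t5.3} to the two older Torelli type conjectures mentioned right after its statement, and then to attack those separately. First I would make the equivalence precise. Since $BL$ is $G_\Z$-equivariant, $BL\circ\psi_{mar}=\psi\circ BL$ for all $\psi\in G_\Z$, so $BL$ descends to $LBL:M_\mu(f_0)\to D_{BL}(f_0)/G_\Z$. Suppose $BL([(f_1,\pm\rho_1)])=BL([(f_2,\pm\rho_2)])$. If $LBL$ is injective (conjecture~\ref{t5.4}), then $f_1\sim_\RR f_2$, say $f_1=f_2\circ\varphi$ with $\varphi\in\RR$; then $\sigma:=\rho_2\circ\varphi_{hom}\circ\rho_1^{-1}\in G_\Z$ satisfies $\sigma_{mar}([(f_1,\pm\rho_1)])=[(f_2\circ\varphi,\pm\rho_2\circ\varphi_{hom})]=[(f_2,\pm\rho_2)]$, hence $\sigma$ fixes $BL([(f_1,\pm\rho_1)])$, i.e. $\sigma\in\Stab_{G_\Z}(BL([(f_1,\pm\rho_1)]))$. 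If moreover $\Stab_{G_\Z}(BL([(f,\pm\rho)]))=\Stab_{G_\Z}([(f,\pm\rho)])$ for all marked singularities (the isotropy conjecture \cite[conjecture~13.12]{He6}), then $\sigma\in\Stab_{G_\Z}([(f_1,\pm\rho_1)])$, so $[(f_1,\pm\rho_1)]=\sigma_{mar}([(f_1,\pm\rho_1)])=[(f_2,\pm\rho_2)]$. The converse implications are immediate. Thus it suffices to prove conjecture~\ref{t5.4} together with the isotropy conjecture.

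Second, I would dispose of the isotropy conjecture by identifying it with conjecture~\ref{t5.1}. By theorem~\ref{t4.4}(c), $\Stab_{G_\Z}([(f,\pm\rho)])=\rho\circ G^{mar}_\RR(f)\circ\rho^{-1}$, and by the definition of $BL$ and of $\Stab_{G_\Z(f)}(H_0''(f))$ recalled in section~\ref{c5}, $\Stab_{G_\Z}(BL([(f,\pm\rho)]))=\rho\circ\Stab_{G_\Z(f)}(H_0''(f))\circ\rho^{-1}$. So the isotropy conjecture is exactly $G^{mar}_\RR(f)=\Stab_{G_\Z(f)}(H_0''(f))$, i.e. conjecture~\ref{t5.1}. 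The inclusion $\subseteq$ holds because $H_0''(f)$ and $(ML(f),V^{>-\iiii})$ are intrinsic to $f$. For $\supseteq$ I would take $\psi\in\Aut(ML(f),V^{>-\iiii},H_0''(f))$, use that $\psi$ preserves the polarized mixed Hodge structure carried by the Brieskorn lattice and hence has finite order, and then realise $\psi$ geometrically by a symmetry $\varphi\in\RR$ with $f=f\circ\varphi$ and $\varphi_{hom}=\psi$, via the description of $\mu$-constant strata and of symmetries of singularities reviewed in section~\ref{c6}: an automorphism preserving the full Brieskorn-lattice package acts on the germ of the $\mu$-constant stratum of $f$ fixing $[f]$, and such a finite-order symmetry is induced by a coordinate change. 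Theorem~\ref{t3.3}(e) would then give $\psi\in G^{smar}_\RR(f)\subseteq G^{mar}_\RR(f)$.

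The main obstacle is conjecture~\ref{t5.4}, the global Torelli problem for Brieskorn lattices: that the isomorphism class of $(ML(f),V^{>-\iiii},H_0''(f))$ determines the right equivalence class of $f$ inside the $\mu$-homotopy class of $f_0$. There is no general method here, and I do not expect to find one; the strategy I would follow is: (1)~show $BL$ is injective on each single $\mu$-constant stratum $S_\mu(f)$, upgrading the immersion property of theorem~\ref{t5.2} by a monodromy and analytic-continuation argument along the connected strata that excludes self-intersections of the period image; (2)~separate distinct right equivalence classes in the same $\mu$-homotopy class by showing their $BL$-images are disjoint, using the spectral numbers (constant within a $\mu$-homotopy class) together with finer invariants extracted from $H_0''(f)$ and the Gauss--Manin connection; (3)~assemble (1) and (2) into injectivity of $LBL$. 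Step~(2), genuinely telling right equivalence classes apart by the Brieskorn lattice in the large, is the crux, and it is what currently forces a case-by-case treatment: for the simple singularities, $22$ of the $28$ exceptional ones, and (in the companion paper) the remaining exceptional, simple-elliptic and hyperbolic families, one fixes explicit quasihomogeneous or Newton-nondegenerate normal forms and reduces (2), hence the whole conjecture, to a finite computation, as carried out in section~\ref{c8}. In the general case this step is the principal difficulty and the reason the conjecture remains open.
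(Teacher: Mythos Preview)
Your reduction in the first paragraph is exactly the paper's Lemma~\ref{t5.5}, and your identification of the isotropy conjecture with Conjecture~\ref{t5.1} at the start of the second paragraph is correct. But the statement you are ``proving'' is a \emph{conjecture}: the paper does not prove it in general, and neither does your proposal.

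The genuine gap is in your second paragraph, where you claim to ``dispose of'' Conjecture~\ref{t5.1}. Your argument runs: given $\psi\in\Stab_{G_\Z(f)}(H_0''(f))$, ``an automorphism preserving the full Brieskorn-lattice package acts on the germ of the $\mu$-constant stratum of $f$ fixing $[f]$, and such a finite-order symmetry is induced by a coordinate change.'' This is circular. There is no map $\Stab_{G_\Z(f)}(H_0''(f))\to\Aut_M$ constructed anywhere in section~\ref{c6}; the map $()_M$ is defined on $\RR^f$, not on $G_\Z(f)$. The Brieskorn lattice $H_0''(f)$ is a datum at a single point, and an automorphism of it does not automatically yield an automorphism of the germ $(S_\mu,0)$. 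What you would need is precisely that $\psi$ comes from some $\varphi\in\RR^f$ with $\varphi_{hom}=\psi$, and that is the content of Conjecture~\ref{t5.1} itself. The immersion property of $BL$ (Theorem~\ref{t5.2}) does not help here: it gives local injectivity, but the points $[(f,\pm\psi^k\circ\rho)]$ for varying $k$ need not lie in a single chart.

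The paper's actual argument, for the cases where the conjecture is established (Theorem~\ref{t5.6}(d), section~\ref{c8}), does not attempt a general proof of Conjecture~\ref{t5.1}. Instead it computes $G_\Z(f)$ explicitly via Lemma~\ref{t8.2} and shows $G_\Z(f)=\{\pm M_h^k\}$, which forces $G^{mar}_\RR(f)=G^{mar}(f)=G_\Z(f)$ and hence $M_\mu^{mar}\cong\C^{\textup{mod}(f)}$; then the period map $BL$ is checked to be an isomorphism directly from the computations in \cite{He1}. Conjecture~\ref{t5.1} falls out as a byproduct, not as an input. Your proposal correctly identifies the logical structure (Lemma~\ref{t5.5}) but then tries to shortcut the hard part, which remains open in general.
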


The group $G_\Z(f_0)$ acts on $M_\mu^{mar}(f_0)$ and on $D_{BL}(f_0)$ properly 
discontinuously, and by its definition the period map $BL$ is $G_\Z(f_0)$-equivariant.
The quotient $M_\mu^{mar}(f_0)/G_\Z(f_0)$ is the moduli space $M_\mu(f_0)$
of right equivalence classes of singularities in the $\mu$-homotopy class of $f_0$
(theorem \ref{t4.3} (d)), the quotient $D_{BL}(f_0)/G_\Z(f_0)$ is a moduli space
for the invariants $LBL(f)$. One obtains a period map
$$LBL:M_\mu(f_0)\to D_{BL}(f_0)/G_\Z(f_0),\quad [f]\mapsto LBL(f).$$
The global Torelli type conjecture for right equivalence classes of singularities
from \cite{He1} says that $LBL$ is injective where the reduced complex
structure on $M_\mu(f_0)$ is considered. It can be strengthened as follows.

\begin{conjecture}\label{t5.4}
\cite[conjecture 12.7]{He6} $LBL$ is injective, here the canonical complex
structure on $M_\mu(f_0)$ is considered.
\end{conjecture}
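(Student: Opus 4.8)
The plan is to reduce Conjecture~\ref{t5.4} to a reconstruction statement for the Brieskorn lattice and then to prove that reconstruction. The reduction is short: it suffices to show that a singularity $f$ in the $\mu$-homotopy class of $f_0$ is determined, up to right equivalence, by the isomorphism class of its tuple $(ML(f),V^{>-\infty},H_0''(f))$. Indeed, if $LBL(f_1)=LBL(f_2)$ then these tuples are isomorphic for $f_1,f_2$, so the reconstruction yields $f_1\sim_\RR f_2$, i.e. $[f_1]=[f_2]$ in $M_\mu(f_0)$, and hence $LBL$ is injective on points. The conjecture is stated for the canonical complex structure on $M_\mu(f_0)$, which by Theorem~\ref{t4.3}(b),(d) is the $G_\Z(f_0)$-quotient of $M_\mu^{mar}(f_0)$; since $BL$ is an immersion on the latter by Theorem~\ref{t5.2} (established there for the reduced structure, conjecturally also for the canonical one), $LBL$ is then both injective and an immersion, as required. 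One can equally route this through the marked refinement: the same reconstruction together with Conjecture~\ref{t5.1} yields Conjecture~\ref{t5.3}, of which Conjecture~\ref{t5.4} is a consequence; but Conjecture~\ref{t5.4} by itself needs only the reconstruction.

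Next I would cut the reconstruction down to size using the known infinitesimal Torelli theorem. By Theorem~\ref{t5.2} the period map $BL$ is an immersion, and along any $\mu$-constant family the ambient space $V^{>-\infty}$ is canonically flat while only the Brieskorn lattice moves (the discussion preceding Conjecture~\ref{t5.1}); hence the locus in a $\mu$-constant stratum on which $H_0''$ is constant is a $0$-dimensional analytic subset. So the set of germs $f$ carrying a prescribed tuple $(ML(f),V^{>-\infty},H_0''(f))$ is discrete, and the remaining problem is to show it is a single right-equivalence class.

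This reconstruction --- recovering $f$, up to right equivalence, from $(ML(f),V^{>-\infty},H_0''(f))$ --- is the heart of the matter and the step I expect to be the main obstacle. For quasihomogeneous $f$ the module $H_0''(f)\subset V^{>-1}$, with its Gauss--Manin connection and higher-residue pairing, already fixes the spectral numbers and, through the associated graded (the Milnor algebra as a graded ring), essentially $f$ itself; the genuinely open case is the deformed one, where an equivariant immersion of moduli of singularities into the classifying space $D_{BL}(f_0)$ is not forced to be injective by any soft argument. Plausible inputs for this last step are: a deformation and monodromy argument degenerating two candidates with the same Brieskorn lattice to a common quasihomogeneous model while keeping the lattices comparable; an analysis of the fibres of $BL$ over the boundary strata of $D_{BL}(f_0)$; or, in the spirit of Section~\ref{c8}, an explicit case-by-case verification for families in which $G_\Z(f_0)$, $D_{BL}(f_0)$ and $M_\mu^{mar}(f_0)$ can be made completely explicit.
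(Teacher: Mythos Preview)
The statement you are trying to prove is a \emph{conjecture} in the paper, not a theorem; the paper offers no general proof, only the special cases in Theorem~\ref{t5.6}. So there is no ``paper's own proof'' to compare against, and the relevant question is whether your proposal actually proves the conjecture. It does not.

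Your ``reduction'' in the first paragraph is vacuous: the reconstruction statement --- that $f$ is determined up to right equivalence by the isomorphism class of $(ML(f),V^{>-\infty},H_0''(f))$ --- is exactly the definition of $LBL$ being injective, since $LBL(f)$ \emph{is} that isomorphism class. You have restated the conjecture, not reduced it. The second paragraph correctly extracts from Theorem~\ref{t5.2} that the fibres of $LBL$ are discrete, but this is already the content of the infinitesimal Torelli result and brings you no closer to global injectivity. In the third paragraph you acknowledge this yourself: you call the remaining step ``the heart of the matter'' and ``the main obstacle,'' and offer only ``plausible inputs'' --- a degeneration argument, a boundary analysis, or a case-by-case verification --- none of which you carry out. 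These are strategies, not proofs; in particular the degeneration idea would require controlling how Brieskorn lattices behave in the limit and why two distinct right-equivalence classes cannot collide, which is precisely the unknown.

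In short, your proposal correctly identifies what needs to be shown and that infinitesimal Torelli alone is insufficient, but it contains no argument for the missing global step. The paper treats the statement as open in general and establishes it only for the explicit families of Section~\ref{c8}, where $M_\mu^{mar}(f_0)$ and $D_{BL}(f_0)$ are computed directly.
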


\begin{lemma}\label{t5.5}
Conjecture \ref{t5.3} $\iff$ conjecture \ref{t5.1} (for all singularities $f$ in the 
$\mu$-homotopy class of $f_0$) and conjecture \ref{t5.4}.
\end{lemma}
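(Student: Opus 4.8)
The plan is to unravel the definitions of the period map $BL$ on $M_\mu^{mar}(f_0)$ and on $M_\mu(f_0)$ and to compare, via the quotient maps, injectivity of $BL$ with the two conjectures \ref{t5.1} and \ref{t5.4}. Recall the $G_\Z(f_0)$-equivariant commutative square
\begin{eqnarray*}
\begin{matrix}
M_\mu^{mar}(f_0)&\stackrel{BL}{\longrightarrow}&D_{BL}(f_0)\\
\downarrow\pi_M& &\downarrow\pi_D\\
M_\mu(f_0)&\stackrel{LBL}{\longrightarrow}&D_{BL}(f_0)/G_\Z(f_0),
\end{matrix}
\end{eqnarray*}
where $\pi_M$ and $\pi_D$ are the quotient maps by $G_\Z:=G_\Z(f_0)$ and the bottom map is $LBL$ from conjecture \ref{t5.4}. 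By theorem \ref{t4.3} (d) one has $M_\mu(f_0)=M_\mu^{mar}(f_0)/G_\Z$, and by theorem \ref{t4.3} (b) the canonical complex structures are the ones considered in both conjectures, so everything is compatible.

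For the direction ``$BL$ injective $\Rightarrow$ \ref{t5.1} and \ref{t5.4}'': First, $LBL$ injective follows formally by chasing the square — if $LBL([f_1])=LBL([f_2])$ then after applying a suitable $\psi\in G_\Z$ (permuting the fibres of $\pi_D$) the points $BL([(f_1,\pm\rho_1)])$ and $BL([(f_2,\pm\rho_2)])$ agree, so by injectivity of $BL$ the marked singularities agree, hence $[f_1]=[f_2]$. Second, for conjecture \ref{t5.1} fix $f$ in the $\mu$-homotopy class of $f_0$ and choose a marking $\rho$; the inclusion $G^{mar}_\RR(f)\subset\Stab_{G_\Z(f)}(H_0''(f))$ is already recorded in the excerpt, and for the reverse inclusion take $\psi\in\Stab_{G_\Z(f)}(H_0''(f))$. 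Then $\psi$ fixes the Brieskorn lattice, so $BL([(f,\pm\rho)])=BL([(f,\pm\rho\circ\psi^{-1})])=\psi_{mar}^{-1}\cdots$ — more precisely, using $G_\Z$-equivariance of $BL$ one computes that $\rho\psi\rho^{-1}\in G_\Z(f_0)$ fixes the image point $BL([(f,\pm\rho)])$, so injectivity of $BL$ forces $\rho\psi\rho^{-1}\in\Stab_{G_\Z}([(f,\pm\rho)])$, which by theorem \ref{t4.4} (c) equals $\rho\circ G^{mar}_\RR(f)\circ\rho^{-1}$; hence $\psi\in G^{mar}_\RR(f)$.

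For the converse ``\ref{t5.1} and \ref{t5.4} $\Rightarrow$ $BL$ injective'': suppose $BL([(f_1,\pm\rho_1)])=BL([(f_2,\pm\rho_2)])=:H$. Projecting by $\pi_D$ and using the square and conjecture \ref{t5.4} (injectivity of $LBL$) gives $[f_1]=[f_2]$ in $M_\mu(f_0)$, so by theorem \ref{t4.3} (d) there is $\psi\in G_\Z$ with $\psi_{mar}([(f_1,\pm\rho_1)])=[(f_2,\pm\rho_2)]$; concretely after a right equivalence we may assume $f_1=f_2=:f$ and $\rho_2=\psi\circ\rho_1$ up to sign. Then $BL$-equivariance gives $\psi(H)=\psi(BL([(f,\pm\rho_1)]))=BL([(f,\pm\psi\rho_1)])=BL([(f,\pm\rho_2)])=H$, so $\rho_1^{-1}\psi\rho_1\in\Aut(ML(f),V^{>-\infty},H_0''(f))=\Stab_{G_\Z(f)}(H_0''(f))$, which by conjecture \ref{t5.1} equals $G^{mar}_\RR(f)$. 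By theorem \ref{t4.4} (c), $\rho_1^{-1}\psi\rho_1\in G^{mar}_\RR(f)$ means $\psi\in\Stab_{G_\Z}([(f,\pm\rho_1)])$, i.e. $\psi_{mar}([(f,\pm\rho_1)])=[(f,\pm\rho_1)]$; hence $[(f_1,\pm\rho_1)]=[(f_2,\pm\rho_2)]$, proving injectivity of $BL$.

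The bookkeeping obstacle — and the one point deserving care — is the translation between ``$\psi$ stabilises the Brieskorn lattice point $\rho(H_0''(f))\in D_{BL}(f_0)$'' and ``$\rho^{-1}\psi\rho$ stabilises $H_0''(f)\subset V^{>-\infty}(f)$'', together with the sign ambiguity $\pm\rho$: one must check that conjugating by $\rho$ intertwines the $G_\Z(f_0)$-action on $D_{BL}(f_0)$ with the $G_\Z(f)$-action on $V^{>-\infty}(f)$, that the definition $BL([(f,\pm\rho)])=\rho(H_0''(f))$ is independent of the sign of $\rho$ (which holds since $-\id$ acts trivially on lattices in $V^{>-\infty}$, cf. remark \ref{t4.2} (a)(i)), and that $\Stab_{G_\Z(f_0)}$ of a point of $D_{BL}(f_0)$ pulls back correctly. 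Once these identifications are set up cleanly, all three implications are pure diagram chases using theorem \ref{t4.3} (d) and theorem \ref{t4.4} (c); no analysis beyond the already-cited finiteness and equivariance statements is needed.
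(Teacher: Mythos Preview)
Your proof is correct and follows essentially the same approach as the paper: both arguments amount to the observation that $BL$ is $G_\Z$-equivariant over the quotient square $M_\mu^{mar}\to M_\mu$, $D_{BL}\to D_{BL}/G_\Z$, so that failure of injectivity of $BL$ splits into a contribution from non-right-equivalent $f_1,f_2$ (controlled by conjecture \ref{t5.4}) and a contribution within a fibre $f_1=f_2=f$ (controlled by conjecture \ref{t5.1} via $\Stab_{G_\Z(f)}(H_0''(f))$ and theorem \ref{t4.4} (c)). Your write-up is more explicit than the paper's (you spell out the use of theorem \ref{t4.4} (c) and the equivariance, whereas the paper compresses this to two sentences), but the logic is identical.
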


\begin{proof}
Conjecture \ref{t5.4} says that $\rho(H_0''(f)) \neq \www\rho(H_0''(\www f))$ for 
all possible markings $\pm\rho$ and $\pm\www\rho$ of $f$ and $\www f$
if and only if $f\not\sim_\RR \www f$. In the case $f=\www f$, 
$\rho(H_0''(f))=\www\rho(H_0''(f))$ is equivalent to 
$\www\rho^{-1}\circ\rho\in\Stab_{G_\Z(f)}(H_0''(f))$. By conjecture \ref{t5.1}
this is equivalent to $(f,\pm\rho)\sim_\RR (f,\pm\www\rho)$.

This shows the equivalence in the case of the reduced complex structures.
For the canonical complex structures, one observes that the one on $M_\mu(f_0)$
is induced by the one on $M_\mu^{mar}(f_0)$ by the quotient map
$M_\mu^{mar}(f_0)\to M_\mu(f_0)=M_\mu^{mar}(f_0)/G_\Z(f_0)$ (theorem \ref{t4.3} (d)).
\end{proof}

\begin{theorem}\label{t5.6}
(a) Conjecture \ref{t5.4} is true for all singularities with modality $\leq 2$
possibly with the exception of the subseries $Z_{1,14k}$, $S_{1,10k}$, 
$S_{1,10k}^\sharp$ ($k\geq 1$) \cite{He1}\cite{He2}.
It is true for the $\mu$-homotopy classes of the Brieskorn-Pham singularities
$\sum_{i=0}^n x_i^{a_i}$ with pairwise coprime exponents and for the $\mu$-homotopy
class of the singularity $x_0^3+x_1^3+x_2^3+x_3^3$ \cite{He3}.

(b) \cite{He5} $\Stab_{G_\Z}(H_0''(f))=\{\pm\id\}$ for generic semiquasihomogeneous
singularities with $\frac{n+1}{2}-\sum_{i=0}^n w_i \geq 4$.

(c) \cite{He5} The period map $LBL$ is generically injective for the semiquasihomogeneous
singularities with $\frac{n+1}{2}-\sum_{i=0}^n w_i \geq 4$.

(d) Conjecture \ref{t5.3} (and by lemma \ref{t5.5} also the conjectures \ref{t5.1}
and \ref{t5.4}, but the last one is known since \cite{He1}) is true for all 
singularities listed in theorem \ref{t3.3} (a).
\end{theorem}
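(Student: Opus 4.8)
The plan is to settle parts (a)--(c) by citation and to focus the work on part (d).

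Parts (a), (b) and (c) are quoted from \cite{He1}, \cite{He2}, \cite{He3} and \cite{He5} and need no new argument. For (a) I would only remark that the fourteen exceptional bimodal families occurring in theorem \ref{t3.3}(a) are disjoint from the excluded subseries $Z_{1,14k}$, $S_{1,10k}$, $S_{1,10k}^\sharp$ (the latter belong to the eight bimodal \emph{series}, not to the exceptional bimodal families), so conjecture \ref{t5.4} applies to all of them, and that for the Brieskorn--Pham singularities with pairwise coprime exponents conjecture \ref{t5.4} for the whole $\mu$-homotopy class is exactly \cite{He3}.

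For part (d) I would first invoke lemma \ref{t5.5}: conjecture \ref{t5.3} for $f_0$ is equivalent to conjecture \ref{t5.4} together with conjecture \ref{t5.1} for \emph{every} singularity $f$ in the $\mu$-homotopy class of $f_0$. In each case appearing in theorem \ref{t3.3}(a) the full $\mu$-homotopy class is covered by part (a) (for the simple singularities it is a single point; for the other families of modality $\leq 2$ the class again consists of singularities of the same type; for Brieskorn--Pham singularities with coprime exponents part (a) is stated for the whole class), so conjecture \ref{t5.4} is available. Everything therefore reduces to the inclusion $\Stab_{G_\Z(f)}(H_0''(f))\subset G^{mar}_\RR(f)$ of conjecture \ref{t5.1}; the opposite inclusion is the one recorded just before conjecture \ref{t5.1} and holds in general.

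To establish this remaining inclusion I would argue case by case, combining two ingredients. The first is theorem \ref{t3.3}(e),(g), which identifies $G^{mar}_\RR(f)$ with $\{\pm\varphi_{hom}\mid\varphi\in\RR,\ f=f\circ\varphi\}$, together with the explicit symmetry groups $\{\varphi\in\RR\mid f=f\circ\varphi\}$ of these singularities from section \ref{c6} (following \cite[ch. 13]{He6}). The second is the explicit description of $H_0''(f)\subset V^{>-1}$: for quasihomogeneous $f$ the Brieskorn lattice is a direct sum of its spectral pieces, so an element of $G_\Z(f)$ preserves it iff it preserves the induced Hodge flags on the generalized $M_h$-eigenspaces, and for the remaining (finitely many, low-dimensional) non-quasihomogeneous members of a $\mu$-homotopy class one transports this picture along the $\mu$-constant deformation using the computations of \cite{He1} and \cite[ch. 13]{He6}. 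Concretely: for the simple singularities $G_\Z(f)$ is finite and both sides are computed by hand; for the simple-elliptic, hyperbolic and exceptional uni-/bimodal singularities one reduces to the quasihomogeneous representative and checks that the subgroup of $G_\Z(f)$ preserving the Hodge flags is exactly the symmetry group; for the Brieskorn--Pham singularities with pairwise coprime exponents all monodromy eigenvalues are distinct, so $\Stab_{G_\Z(f)}(H_0''(f))$ lies in the finite abelian subgroup respecting the eigenspace decomposition and is matched with $G^{mar}_\RR(f)$ by the Gauss--Manin computation of \cite{He3}.

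I expect the main obstacle to be precisely this last verification for the families that carry moduli --- the simple-elliptic and the exceptional uni-/bimodal singularities --- where $G_\Z(f)$ is infinite. There conjecture \ref{t5.1} is genuinely the statement that the finitely many Hodge flags encoded by $H_0''(f)$ are rigid enough to pin $\Stab_{G_\Z(f)}(H_0''(f))$ down to the (small) symmetry group, i.e. one must rule out that some lattice automorphism outside the symmetry group accidentally preserves all of them; doing this requires the spectral and Hodge-theoretic data from \cite[ch. 13]{He6} and the case analysis of section \ref{c8}, and is the delicate part of the argument.
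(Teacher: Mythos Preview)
Your reduction of part (d) via lemma \ref{t5.5} to conjecture \ref{t5.4} plus conjecture \ref{t5.1} is logically sound, but it is not the route the paper takes, and the part you flag as ``delicate'' is exactly what the paper's approach circumvents.

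The paper does \emph{not} verify conjecture \ref{t5.1} independently and then invoke lemma \ref{t5.5}. Instead it proves conjecture \ref{t5.3} directly, and obtains \ref{t5.1} only as a corollary. The mechanism is: first compute $G_\Z(f)$ completely for the quasihomogeneous representative, using a number-theoretic argument (lemmas \ref{t8.1} and \ref{t8.2}) which exploits that the monodromy eigenvalues are simple and that the Milnor lattice is $M_h$-cyclic (Orlik's conjecture, known in these cases). The outcome is $G_\Z(f)=\{\pm M_h^k\}$, which forces the chain $\{\pm M_h^k\}\subset G^{mar}_\RR(f)\subset G^{mar}(f)\subset G_\Z(f)$ to collapse. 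In particular conjecture \ref{t3.2}(a) holds, so by theorem \ref{t4.4}(a) the space $M_\mu^{mar}$ is connected. A $\C^*$-action argument then identifies $M_\mu^{mar}$ globally with the explicit base $X\cong\C^{\mmod(f)}$ of the standard $\mu$-constant family, and the period map $X\to D_{BL}$ was already shown in \cite{He1} to be an isomorphism. Thus $BL$ is injective (in fact bijective) and conjecture \ref{t5.3} follows. The $D_{2k}$ case (theorem \ref{t8.4}) needs a separate, slightly longer lattice computation because there is a two-dimensional $(-1)$-eigenspace.

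The advantage of the paper's route is that it never has to analyse $\Stab_{G_\Z(f)}(H_0''(f))$ for the non-quasihomogeneous members of the family; once $M_\mu^{mar}$ is known globally and $BL$ is an isomorphism, conjecture \ref{t5.1} for \emph{every} $f$ in the class drops out of lemma \ref{t5.5}. Your plan would require that analysis directly, and the only realistic way to carry it out is again to bound $G_\Z(f)$ itself --- so your approach would in practice converge to the paper's computation of $G_\Z$, but with conjecture \ref{t5.1} as an intermediate target rather than as a consequence. Note also that for the 22 exceptional families treated here $G_\Z(f)$ is \emph{finite} (equal to $\{\pm M_h^k\}$), contrary to your expectation; the genuinely infinite cases (simple-elliptic, hyperbolic, and the six remaining exceptional families) are deferred to another paper.
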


The new part of this theorem is part (d). For the simple, 22 of the 28 exceptional and the
Brieskorn-Pham singularities it will be proved in section \ref{c8}.
The remaining 6 exceptional, 
the simple-elliptic and the hyperbolic singularities will be treated in another paper.
In all these cases one can build on the study of the period map 
$LBL:M_\mu\to D_{BL}/G_\Z$ in \cite{He1}.
The crucial new point is to determine $M_\mu^{mar}$. 
And central for this is to prove conjecture \ref{t3.2} (a).
Then $M_\mu^{mar}$ has only one component by theorem \ref{t4.4} (a).

One may expect that $\Stab_{G_\Z}(H_0''(f))=\{\pm\id\}$ for generic singularities
in one $\mu$-homotopy class, if that class is not too small. Part (b) shows this
for semiquasihomogeneous singularities with $\frac{n+1}{2}-\sum_{i=0}^n w_i \geq 4$. 
But special members may have a large finite isotropy group of their Brieskorn lattice.

\section{Symmetries of singularities}\label{c6}
\setcounter{equation}{0}

\noindent
Here we will review some results on symmetries of singularities from \cite[13.1 and 13.2]{He6}
and simplify the proofs. The results will imply theorem \ref{t3.3} (f)+(g)+(h).
They will also be used in the proof of theorem \ref{t4.3} in section \ref{c7}.
They build on work of Slodowy \cite{Sl} and Wall \cite{Wa1}\cite{Wa2}.

An {\it unfolding} of a singularity $f\in\mmm_{\C^{n+1},0}^2$(a germ with an isolated singularity
at $0$) is a holomorphic function germ $F:(\C^{n+1}\times M,0)\to (\C^{n+1},0)$
with $F_{|(\C^{n+1}\times\{0\},0)}=f$ and $(M,0)$ a germ of a manifold.
It is {\it versal} if it induces any unfolding $G:(\C^{n+1}\times N,0)\to (\C,0)$,
that means, a morphism $\varphi:(N,0)\to (M,0)$ and a morphism 
$\Phi:(\C^{n+1}\times N,0)\to (\C^{n+1}\times M,0)$ exist such that
$$\pr_M\circ\Phi=\varphi\circ\pr_N\quad\textup{and}\quad G=F\circ \Phi\quad\textup{and}
\quad\Phi_{|(\C^{n+1}\times\{0\},0)}=\id.$$
It is {\it semiuniversal} if it is versal und if $\dim (M,0)$ is minimal.

Semiuniversal unfoldings exist (see e.g. \cite{Ma2}\cite{AGV1}).
Denote by $J_F:=(\ppp F/\ppp x_i)\subset \OO_{\C^{n+1}\times M,0}$ the 
{\it Jacobi ideal} of $F$, and by $J_f$ the one of $f$.
An unfolding is semiuniversal if and only if the map
$$\aaa_C:\TT_{M,0}\to \OO_{\C^{n+1}\times M,0}/J_F,\quad \frac{\ppp}{\ppp t_i}\mapsto
\left[\frac{\ppp F}{\ppp t_i}\right]$$
is an isomorphism. Equivalent is that the map
$$\aaa_0:T_0M\to \OO_{\C^{n+1},0}/J_f$$
is an isomorphism.
Then $\aaa_C$ induces a multiplication $\circ$ on $\TT_{M,0}$, a unit vector field
$e:=\aaa_C^{-1}(1)$ and an Euler field $E:=\aaa_C^{-1}([F])$.
Then $((M,0),\circ, e,E)$ is the germ of an {\it F-manifold with Euler field},
see \cite{HM}\cite{He6} for its definition.

One can choose good representatives $F$ and $M$. Then for all $t\in M$ the sum of the
Jacobi algebras of the critical points of $F_t$ is isomorphic via $\aaa_C$ to
$T_tM$ as an algebra. For generic $t\in M$ $F_t$ has only $A_1$-singularities,
so the multiplication on $TM$ is generically semisimple. Such an F-manifold is
called {\it massive}.
The group $\Aut_M:=\Aut((M,0),\circ,e,E)$ of automorphisms of a germ of a massive
F-manifold with Euler field is finite \cite[theorem 4.14]{He6}.

Denote by
$$\RR^f:=\{\varphi\in\RR\, |\, f=f\circ \varphi\}$$
the group of {\it  symmetries} of $f$.
 
Consider a semiuniversal unfolding $F$ of $f$ with base space $(M,0)$ and
a symmetry $\varphi$ of $f$. 
Then $F\circ\varphi^{-1}$ is  a semiuniversal unfolding of $f$ with the same 
base space $(M,0)$. It is induced by $F$ via a pair $(\Phi,\varphi_M)$ of isomorphisms
with
$$F\circ \varphi^{-1}=F\circ\Phi,\quad \Phi_{|(\C^{n+1}\times \{0\},0)}=\id,
\quad \varphi_M\in\Aut_M.$$
Then $\www\Phi:=\Phi\circ\varphi$ satisfies
\begin{eqnarray}\label{6.1}
\pr_M\circ \www\Phi=\varphi_M\circ\pr_M,\quad F=F\circ\www \Phi,\quad
\www\Phi_{|(\C^{n+1}\times \{0\},0)}=\varphi.
\end{eqnarray}
Here $\Phi$ is not at all unique, but $\varphi_M$ is unique because $\Aut_M$ is finite
and $\aaa_0\circ (\ddd\varphi_M)_{|0}\circ \aaa_0^{-1}$ is the automorphism
of $\OO_{\C^{n+1},0}/J_f$ which is induced by $\varphi$.
One obtains a group homomorphism
$$()_M:\RR^f\to \Aut_M,\quad \varphi\mapsto \varphi_M.$$
The group $\RR^f$ is possibly $\infty$-dimensional, 
but the group $j_k\RR^f$ of $k$-jets in $\RR^f$
is an algebraic group for any $k$. Let
$$R_f:= j_1\RR^f/(j_1\RR^f)^0$$
be the finite group of components of $j_1\RR^f$. It is easy to see that
$R_f=j_k\RR^f/(j_k\RR^f)^0$ for any $k\geq 1$ \cite[lemma 13.8]{He6}.
The following theorem is contained in \cite[theorem 13.9]{He6},
except for part (d). Some parts of the proof below are simpler than in \cite{He6}.

\begin{theorem}\label{t6.1}
Fix a singularity $f\in\mmm_{\C^{n+1},0}^2$ and a semiuniversal unfolding $F$ with 
base space $(M,0)$.

(a) The homomorphism $()_M:\RR^f\to \Aut_M$ factors through $R_f$ to a 
homomorphism $()_M:R_f\to \Aut_M$.

(b) If $\mult f\geq 3$ then $()_M:R_f\to \Aut_M$ is an isomorphism.
And then $j_1\RR^f=R_f.$

(c) If $\mult f=2$ then $()_M:R_f\to \Aut_M$ is surjective with kernel of order $2$. 
If $f=g(x_0,...,x_{n-1})+x_n^2$ then the kernel is generated by the class of the
symmetry $(x\mapsto (x_0,...,x_{n-1},-x_n))$.

(d) If $\mult f=2$ denote by $\varphi^{(1)}$ and $\varphi^{(2)}$ the (linear) actions
of $\varphi\in\RR^f$ on $\frac{\mmm}{\mmm^2+J_f}$ and on $\frac{\mmm^2+J_f}{\mmm^2}$.
Then $\det \varphi^{(2)}\in\{\pm 1\}$. The homomorphism
$$\RR^f\to \Aut\left(\frac{\mmm}{\mmm^2+J_f}\right)\times\{\pm 1\},\quad
\varphi\mapsto (\varphi^{(1)},\det\varphi^{(2)})$$
factors through $R_f$ to an injective homomorphism 
$$R_f\to \Aut\left(\frac{\mmm}{\mmm^2+J_f}\right)\times\{\pm 1\}.$$

(e) If $f=g(x_0,..,x_m)+x_{m+1}^2+...+x_n^2$ and $\mult g\geq 3$ then
$R_f=R_g\times (\ker ()_M)\cong R_g\times S_2$. If $\mult g=2$ then
$R_f=R_g$.

(f) The homomorphism $()_{hom}:\RR^f\to G_\Z(f)$ factors through $R_f$ to a 
homomorphism $()_{hom}:R_f\to G_\Z(f)$. 
The image is 
$$(R_f)_{hom}=G^{smar}_\RR(f)\subset \Stab_{G_\Z}(H_0''(f))\subset G_\Z(f).$$

(g) The homomorphism $()_{hom}:R_f\to G^{smar}_\RR(f)$ is an isomorphism.

(h) The homomorphism $()_M\circ ()_{hom}^{-1}:G^{smar}_\RR(f)\to\Aut_M$
is an isomorphism if $\mult f\geq 3$. It is 2--1 with kernel $\{\pm \id\}$
if $\mult f=2$. In any case it extends to a 2--1 morphism
$()_{hom\to M}:G^{mar}_\RR(f)\to\Aut_M$ with kernel $\{\pm\id\}$.

(i) Theorem \ref{t3.3} (f)+(g)+(h) is true.
\end{theorem}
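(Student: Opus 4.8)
The statement to prove is that Theorem \ref{t3.3}(f)+(g)+(h) holds, given all the structural results in parts (a)--(h) of Theorem \ref{t6.1}. Recall that Theorem \ref{t3.3}(f) asserts $G^{smar}_\RR(f)$ and $G^{mar}_\RR(f)$ are finite; (g) asserts $-\id\notin G^{smar}_\RR(f)\iff \mult f\geq 3$ (with the reformulation in terms of $G^{mar}_\RR$); and (h) asserts $G^{mar}_\RR(f)=G^{mar}_\RR(f+x_{n+1}^2)$. The plan is to read each of these off from the already-established machinery, since the real content is in (a)--(h) above.

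For (f): by Theorem \ref{t6.1}(g) the homomorphism $()_{hom}:R_f\to G^{smar}_\RR(f)$ is an isomorphism, and $R_f$ is a finite group by its definition ($R_f=j_1\RR^f/(j_1\RR^f)^0$ is the finite component group of an algebraic group). Hence $G^{smar}_\RR(f)$ is finite. Then $G^{mar}_\RR(f)=\{\pm\psi\mid\psi\in G^{smar}_\RR(f)\}$ has at most twice as many elements, so it is finite too. For (g): by Theorem \ref{t6.1}(h), the morphism $()_{hom\to M}:G^{mar}_\RR(f)\to\Aut_M$ is always $2$--$1$ with kernel $\{\pm\id\}$; in particular $-\id\in G^{mar}_\RR(f)$ always. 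The question is whether $-\id$ already lies in $G^{smar}_\RR(f)$. Part (h) says $()_M\circ()_{hom}^{-1}:G^{smar}_\RR(f)\to\Aut_M$ is an \emph{isomorphism} when $\mult f\geq 3$ and is $2$--$1$ with kernel $\{\pm\id\}$ when $\mult f=2$. In the first case $G^{smar}_\RR(f)\to\Aut_M$ is injective, so $G^{smar}_\RR(f)$ cannot contain $-\id$ (which maps to $\id_M$, as its differential at $0$ is $-\id$, whose induced automorphism of $\OO/J_f$ is the identity on the even part and anyway $\varphi_M$ for $-\id$ would have to be $\id$; more directly, $-\id$ is in the kernel of $()_{hom\to M}$ and injectivity forbids it), giving $-\id\notin G^{smar}_\RR(f)$ and hence $G^{mar}_\RR(f)=G^{smar}_\RR(f)\times\{\pm\id\}$. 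In the second case $-\id$ generates the kernel of $()_M\circ()_{hom}^{-1}$, so $-\id\in G^{smar}_\RR(f)$ and $G^{mar}_\RR(f)=G^{smar}_\RR(f)$. This is exactly the dichotomy claimed in (g).

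For (h): apply Theorem \ref{t6.1}(e) with $m=n$, i.e.\ to the pair $g=f$ and the suspension $f+x_{n+1}^2$. That part gives $R_{f+x_{n+1}^2}=R_f\times(\ker()_M)\cong R_f\times S_2$ when $\mult f\geq 3$, and $R_{f+x_{n+1}^2}=R_f$ when $\mult f=2$. Combining with the isomorphism $()_{hom}:R_f\xrightarrow{\sim}G^{smar}_\RR(f)$ from (g) (applied to both $f$ and $f+x_{n+1}^2$, and using that $G_\Z(f)=G_\Z(f+x_{n+1}^2)$ via the canonical sign-ambiguous identification recalled in section \ref{c2}), we get: if $\mult f\geq 3$ then $G^{smar}_\RR(f+x_{n+1}^2)\cong G^{smar}_\RR(f)\times S_2$, and passing to $G^{mar}_\RR=G^{smar}_\RR\times\{\pm\id\}$ (valid for $f$ since $\mult f\geq 3$) versus $G^{mar}_\RR(f+x_{n+1}^2)=G^{smar}_\RR(f+x_{n+1}^2)$ (valid since $\mult(f+x_{n+1}^2)=2$), both sides equal $G^{smar}_\RR(f)\times\{\pm\id\}$ inside $G_\Z(f)=G_\Z(f+x_{n+1}^2)$. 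If $\mult f=2$ then $G^{smar}_\RR(f+x_{n+1}^2)=G^{smar}_\RR(f)$, and since both $f$ and $f+x_{n+1}^2$ have multiplicity $2$, $G^{mar}_\RR=G^{smar}_\RR$ on each side, so again they coincide. In all cases $G^{mar}_\RR(f)=G^{mar}_\RR(f+x_{n+1}^2)$.

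The main obstacle, such as it is, is purely bookkeeping: one must be careful that the identification of $G_\Z(f)$ with $G_\Z(f+x_{n+1}^2)$ (which is only canonical up to sign) is compatible with the $\pm\id$ ambiguities floating around in the definitions of $G^{mar}_\RR$ and in the generator of $\ker()_M$ described in (c), so that the isomorphisms from (e) and (g) actually land where claimed rather than merely abstractly. Once one checks that the element of $R_{f+x_{n+1}^2}$ generating the $S_2$-factor maps under $()_{hom}$ precisely to $-\id$ (using (c): the extra symmetry is $x_{n+1}\mapsto -x_{n+1}$, whose homological action on $Ml(f+x_{n+1}^2)=Ml(f)\otimes Ml(x_{n+1}^2)$ is $\id\otimes(-\id)=-\id$), everything fits together and the three assertions follow.
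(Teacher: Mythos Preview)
Your proposal is correct and follows exactly the route the paper intends: the paper's own proof of (i) is the single line ``Compare (e)--(h)'', and you have faithfully unpacked that instruction, deriving Theorem~\ref{t3.3}(f) from (g) and the finiteness of $R_f$, Theorem~\ref{t3.3}(g) from the injectivity/2--1 dichotomy in (h), and Theorem~\ref{t3.3}(h) from (e) together with (g) and the identification $G_\Z(f)=G_\Z(f+x_{n+1}^2)$. Your final bookkeeping check that the generator of the $S_2$-factor in (e) maps under $()_{hom}$ to $-\id$ is precisely the computation the paper records in its proof of (h) (citing the proof of Theorem~\ref{t3.3}(b)), so the equality of subgroups (not merely an abstract isomorphism) is secured.
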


\begin{proof}
(a) The action of $\varphi\in \RR^f$ on $\OO_{\C^{n+1},0}/J_f$ depends only on
a sufficiently high $k$-jet of $\varphi$, and it depends continuously on it.
Because of $\OO/J_f\cong T_0M$ and because $\Aut_M$ is finite, $k$-jets of 
symmetries in one component of $j_k\RR^f$ induce the same element of $\Aut_M$.

(b) Surjectivity: The F-manifold $(M,\circ,e,E)$ determines a Lagrange variety
in $T^*M$, and this determines up to isomorphism a semiuniversal unfolding,
see \cite[theorem 5.6]{He6} for details and \cite[19.3]{AGV1} for the relation
between Lagrange maps and unfoldings. Therefore any $\varphi_M\in\Aut_M$ lifts
to an automorphism $(\www\Phi,\varphi_M)$ of the unfolding $F$, with 
$\www\Phi$ as in \eqref{6.1}.

Injectivity: $\mult f\geq 3$ implies $J_f\subset\mmm^2$ and
surjectivity of the map $\mmm/J_f\to \mmm/\mmm^2$.
The action of $j_1\RR^f$ on $\mmm/\mmm^2$ is faithful.
Therefore $j_1\RR^f=R_f$ and $()_M:R_f\to \Aut_M$ is injective.

(c)+(d)+(e) Surjectivity in (c): as in (b).

It is sufficient to consider the case $f=g(x_0,...,x_m)+x_{m+1}^2+...+x_n^2$
with $\mult g\geq 3$, $m<n$. Then
$$\frac{\mmm}{\mmm^2+J_f}=\frac{\mmm}{\mmm^2+(x_{m+1},...,x_n)}\quad
\textup{and}\quad \frac{\mmm^2+J_f}{\mmm^2}=\frac{\mmm^2+(x_{m+1},...,x_n)}{\mmm^2}.$$
The kernel of the natural homomorphism
$$j_1\RR^f\to \Aut\left(\frac{\mmm}{\mmm^2+J_f}\right)
\times \Aut\left(\frac{\mmm^2+J_f}{\mmm^2}\right)$$
is unipotent, so connected. The image is $R_g\times O(n-m)$.
The second factor is due to $j_2f=x_{m+1}^2+...+x_n^2$. For the first factor 
observe the following: 
$(M,0)$ is also the base space of a semiuniversal unfolding of $g$,
$()_M:R_g\to\Aut_M$ is an isomorphism, $()_M:R_f\to \Aut_M$ is surjective,
so $R_g$ and $R_f$ induce the same automorphisms of $T_0M$.
Also
$$T_0M \cong \frac{\OO_{\C^{m+1},0}}{J_g}\supset \frac{\mmm_{\C^{m+1},0}}{J_g} 
\twoheadrightarrow
\frac{\mmm_{\C^{m+1},0}}{\mmm^2_{\C^{m+1},0}}
\cong \frac{\mmm}{(\mmm^2+J_f)},$$
and $j_1\RR^g=R_g$ acts faithfully on $\mmm_{\C^{m+1},0}/\mmm^2_{\C^{m+1},0}$
Therefore the image above is $R_g\times O(n-m)$.

As the kernel of $j_1\RR^f\to R_g\times O(n-m)$ is connected
\begin{eqnarray*}
R_f &=&j_1\RR^f/(j_1\RR^f)^0 = R_g\times O(n-m)/(R_g\times O(n-m))^0\\
&=&R_g\times \{\id,
(\textup{the class of }(x\mapsto (x_0,...,x_{n-1},-x_n)\textup{ in }R_f)\}.
\end{eqnarray*}
The rest is clear now, too.

(f) $G_\Z(f)$ is a discrete group. $(R_f)_{hom}=G^{smar}(f)$ is theorem \ref{t3.3} (e).

(g) It rests to show that $()_{hom}:R_f\to G_\Z(f)$ is injective.
Suppose $\varphi_{hom}=\id$ for some $\varphi\in\RR^f$.
Then $\varphi_{hom}$ acts trivially on $H_0''(f)$. The space
$$\Omega_f:=\Omega_{\C^{n+1},0}^{n+1}/\ddd f\land \Omega_{\C^{n+1},0}^n$$
is a quotient of $H_0''(f)$ (due to Brieskorn, see e.g. \cite{Ma1}\cite{AGV2}\cite{He6}).
It is a free $\OO/J_f$-module of rank $1$, generated by the class $[\omega_0]$
of the volume form $\omega_0:=\ddd x_0\land...\land\ddd x_n$.
The action of $\varphi$ on $\Omega_f$ is trivial, because the action on $H_0''(f)$ is trivial.
Therefore $[\varphi^*\omega_0]=[\omega_0]$ and the action of $\varphi$ on $\OO/J_f$ is trivial.
Therefore $\det\left(\frac{\ppp \varphi_j}{\ppp x_i}\right)(0)=1$ and $\varphi_M=\id$.
Because of (b) and (c) the class of $\varphi$ in $R_f$ is $\id$.

(h) For $f=g(x_0,...,x_{n-1})+x_n^2$ the proof of theorem \ref{t3.3} (b) shows  
$$(x\mapsto (x_0,...,x_{n-1},-x_n))_{hom}=-\id.$$
Thus if $\mult f=2$ then $-\id\in G^{smar}_\RR(f)$ and 
$(\{\pm\id\})^{-1}_{hom}=\ker ()_M$. 

If $\mult f\geq 3$ then $()_M\circ ()_{hom}^{-1}:G^{smar}_\RR(f)\to \Aut_M$ is
an isomorphism, and the extension to $G^{smar}_\RR(f+x_{n+1}^2)$ is 2-1 with 
kernel $\{\pm\id\}$, thus $-\id\notin G^{smar}_\RR(f)$.

(i) Compare (e)--(h).
\end{proof}

In the case of a quasihomogeneous singularity the group $R_f$ has a canonical lift
to $\RR^f$. It will be useful for the calculation of $R_f$.

\begin{theorem}\label{t6.2}
\cite[theorem 13.11]{He6}
Let $f\in \C[x_0,...,x_n]$ be a quasihomogeneous polynomial with an isolated singularity
at $0$ and weights $w_0,...,w_n\in\Q\cap(0,\frac{1}{2}]$ and weighted degree $1$.
Suppose that $w_0\leq ...\leq w_{n-1}<\frac{1}{2}$ (then $f\in \mmm^3$
if and only if $w_n<\frac{1}{2})$. Let $G_w$ be the algebraic group of 
quasihomogeneous coordinate changes, that means, those which respect $\C[x_0,...,x_n]$
and the grading by the weights $w_0,...,w_n$ on it. Then
$$R_f \cong \Stab_{G_w}(f).$$
\end{theorem}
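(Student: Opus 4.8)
The plan is to produce mutually inverse homomorphisms between $R_f$ and $\Stab_{G_w}(f)$. The inclusion $G_w\subset\RR$ restricts to an inclusion $\Stab_{G_w}(f)\subset\RR^f$; composing with the quotient map $\RR^f\to R_f=j_1\RR^f/(j_1\RR^f)^0$ gives a natural homomorphism $\iota:\Stab_{G_w}(f)\to R_f$. The main work is to show that $\iota$ is an isomorphism, and for this I would exhibit an explicit section. The key classical input is the fact, due essentially to the grading being positive, that a quasihomogeneous isolated singularity is \emph{contact equivalent} to its own $k$-jet in a controlled way, and more precisely that any $\varphi\in\RR^f$ can be deformed inside $\RR^f$ to a quasihomogeneous coordinate change. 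Concretely: the Euler field $E=\sum_i w_ix_i\ppp_{x_i}$ satisfies $Ef=f$, and one can use the flow of $E$ (i.e. the $\C^*$-action $x\mapsto(\lambda^{w_0}x_0,\dots,\lambda^{w_n}x_n)$) to split off from an arbitrary $\varphi\in\RR^f$ its ``leading quasihomogeneous part''.

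The key steps, in order, are as follows. First I would set up the $\C^*$-action on $\RR$ by conjugation, $\varphi\mapsto \varphi_\lambda:=\sigma_\lambda^{-1}\circ\varphi\circ\sigma_\lambda$ where $\sigma_\lambda$ is the weighted scaling; since $f\circ\sigma_\lambda=\lambda f$ one checks $\varphi\in\RR^f\iff\varphi_\lambda\in\RR^f$, so $\C^*$ acts on $\RR^f$. Second, I would show that as $\lambda\to 0$ (or $\infty$, depending on the sign convention forced by $w_i>0$) the limit $\varphi_0:=\lim_\lambda\varphi_\lambda$ exists in the group of polynomial maps and is a quasihomogeneous coordinate change fixing $f$, i.e. $\varphi_0\in\Stab_{G_w}(f)$; this uses that the components $\varphi_i$ lie in $\mmm$ and decomposing them into weighted-homogeneous pieces, only the lowest-weight piece of each $\varphi_i$ survives in the limit, and its weight must be $w_i$ because $(d\varphi)_0$ is invertible. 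Third, the path $\lambda\mapsto\varphi_\lambda$ connects $\varphi$ to $\varphi_0$ inside $\RR^f$, hence $\varphi$ and $\varphi_0$ have the same image in $R_f$; this shows $\iota$ is surjective. Fourth, for injectivity I would argue that if $\psi\in\Stab_{G_w}(f)$ lies in $(j_1\RR^f)^0$ then, applying the same limiting procedure to a path in $(j_1\RR^f)^0$ joining $\psi$ to $\id$ and using that the $\C^*$-flow preserves the identity component, one finds $\psi=\psi_0=\lim(\id)_\lambda=\id$; more carefully, since $\Stab_{G_w}(f)$ is an algebraic subgroup of $G_w$ and $\iota$ has a continuous section given by $\varphi\mapsto\varphi_0$, the composite $\Stab_{G_w}(f)\to R_f\to\Stab_{G_w}(f)$ is the identity, which forces $\iota$ injective.

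The main obstacle I anticipate is step two: making rigorous that the conjugation limit $\varphi_0$ exists and is genuinely a \emph{coordinate change} (invertible, not just a formal endomorphism) fixing $f$, and that the path to it stays in $\RR^f$ rather than merely in the monoid of $f$-preserving self-maps. This is where the hypothesis $w_i\in(0,\tfrac12]$ and weighted degree $1$ is used: positivity of all weights guarantees the limit is a polynomial map with no constant or negative-weight terms, and the normalization $Ef=f$ guarantees $f\circ\varphi_0=f$. A secondary technical point is compatibility with the $k$-jet truncation implicit in the definition of $R_f$: one should check (as in \cite[lemma 13.8]{He6}) that taking $k$-jets commutes with the $\C^*$-action, so the identity component $(j_k\RR^f)^0$ is exactly the union of the conjugation-orbit closures, which is precisely what makes $\varphi$ and $\varphi_0$ land in the same component. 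Once these points are settled, the isomorphism $R_f\cong\Stab_{G_w}(f)$ follows formally.
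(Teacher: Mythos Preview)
The paper does not give a proof of this theorem; it is simply cited from \cite[theorem 13.11]{He6}. So there is nothing in the present paper to compare your plan against directly. That said, your strategy --- use the weighted $\C^*$-action $\varphi\mapsto\varphi_\lambda:=\sigma_\lambda^{-1}\circ\varphi\circ\sigma_\lambda$ to retract $\RR^f$ onto the quasihomogeneous symmetries --- is the natural one and is essentially how such results are proved in the literature (Wall, Slodowy, and \cite{He6}). You have also correctly located the crux of the argument in step two.

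There is, however, a genuine gap in your proposed resolution of that step. The assertion that the lowest weighted degree of $\varphi_i$ ``must be $w_i$ because $(d\varphi)_0$ is invertible'' is not correct: invertibility of the linear part does not prevent $\varphi_i$ from containing a term $c\,x_j$ with $w_j<w_i$, and any such term makes $(\varphi_\lambda)_i$ blow up like $\lambda^{w_j-w_i}$ as $\lambda\to 0$. Similarly, ``positivity of all weights'' only rules out constant terms in $\varphi_i$; it does not force the minimal weighted degree of $\varphi_i$ to be $\geq w_i$. What actually excludes such low-weight terms is the equation $f\circ\varphi=f$ \emph{together with} the isolated-singularity hypothesis: one must argue, for instance by expanding $f\circ\varphi=f$ into weighted-homogeneous pieces and using that $(\partial_0 f,\ldots,\partial_n f)$ is a regular sequence, that a nonzero low-weight part of $\varphi$ would produce a nonzero low-weight part of $f\circ\varphi$. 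This is exactly the point that needs work, and your plan does not yet supply it. A related secondary issue: your injectivity argument tacitly uses that $\Stab_{G_w}(f)$ has trivial identity component (otherwise the ``section'' $\varphi\mapsto\varphi_0$ does not descend to $R_f$). This is true, but it is a consequence of the hypothesis $w_{n-1}<\tfrac12$ (so that no Koszul syzygy among the $\partial_i f$ has components of weight $w_j$), and should be made explicit.
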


\section{Proof of theorem \ref{t4.3}}\label{c7}
\setcounter{equation}{0}

\noindent
The proof of theorem \ref{t4.3} will be similar to the proof of theorem 13.15 in \cite{He6}.
Like that proof it will use results from \cite[13.3]{He6}, they are reformulated in 
theorem \ref{t7.2}. But it will use also joint consequences of these results and
theorem \ref{t6.1}, they are formulated in corollary \ref{t7.3}. The proof of theorem
\ref{t4.3} comes after it. The results in \cite[13.3]{He6} concern the $\mu$-constant
stratum.

For a moment, fix a singularity $f\in\mmm_{\C^{n+1},0}^2$ and choose a good representative
$F:U\times M\to \C$ with $U\subset \C^{n+1}$ of a semiuniversal unfolding,
with base space $M$. The $\mu$-constant stratum $S_\mu\subset M$ is
\begin{eqnarray*}
S_\mu &=& \{t\in M\, |\,  \textup{Crit}(F_t)=\{x\}\textup{ and }F_t(x)=0\}\\
&=& \{t\in M\, |\,  0\textup{ is the only critical value of }F_t\}\\
&=& \{t\in M\, |\, E\circ \textup{ is nilpotent on }T_tM\}.
\end{eqnarray*}
The second equality is due to Gabrielov \cite{Ga}, Lazzeri and L\^e.
The third equality follows from the definition of multiplication and Euler
field on $M$ 
(see section \ref{c6}): The eigenvalues of $E\circ$ on $T_tM$ are the critical values
of $F_t$.

The germ $(S_\mu,0)\subset (M,0)$ is a datum of the germ $(M,0)$ of an F-manifold
with Euler field, any automorphism $\psi\in \Aut_M:=\Aut((M,0),\circ,e,E)$
restricts to an automorphism of $(S_\mu,0)$.

The critical points $x$ of $F_t$ with $t\in S_\mu$ might a priori not be equal to $0$.
But by a result of Teissier \cite[6.14]{Te} there exists a holomorphic section
$\sigma:M\to U\times M$ with $\textup{Crit}(F_t)=\{\sigma(t)\}$ for $t\in S_\mu$.
Because $F(x+\sigma(t),t)$ is also a semiuniversal unfolding of $f$, we can assume
from now on that for $t\in S_\mu$ $\textup{Crit}(F_t)=\{0\}$,
Then the restriction of $F$ to $S_\mu$ is a holomorphic $\mu$-constant family
in the sense of definition \ref{t2.1}. By theorem \ref{t2.2} it comes equipped
with a flat bundle $ML(F_{|S_\mu})$ of Milnor lattices with Seifert forms $L$.

Any $\varphi\in \Aut_M$ lifts to an automorphism of the (germ of the) unfolding $F$
(see the proof of theorem \ref{t6.1} (b) and \cite[theorem 5.6]{He6}).
Therefore one may expect that $(S_\mu,0)/\sim_\RR = (S_\mu,0)/\Aut_M$.
This is true and part of much stronger results in \cite[13.3]{He6}.
They are cited in theorem \ref{t7.2}. The existence of an unfolding
with the properties in definition \ref{t7.1} is part of them.

\begin{definition}\label{t7.1}
Fix a singularity $f\in\mmm_{\C^{n+1},0}^2$. A representative $F:U\times M\to \C$
of a semiuniversal unfolding,
with $U\subset \C^{n+1}$ a neighborhood of $0$ and $M$ the base space,
is called a {\rm very good representative} if the following holds.

(i) Any $\varphi\in \Aut_M$ extends (from the germ) to an automorphism of the
F-manifold $M$. 

(ii) Any isomorphism $\psi: (M,t)\to (M,\www t)$ of germs of F-manifolds with
Euler fields and with $t,\www t\in S_\mu$ is the restriction of an element of $\Aut_M$.

(iii) Any $\varphi\in \Aut_M$ lifts to an automorphism $(\Phi,\varphi)$ of the
unfolding $F$, that means, $\Phi:Y_1\to Y_2$ is an isomorphism
of suitable open subsets $Y_1$ and $Y_2$ of $U\times M$ which contain all critical
points of all $F_t$, with $\pr_M\circ\Phi =\varphi\circ\pr_M$ and $F=F\circ\Phi$.

(iv) $S_\mu$ is contractible. Therefore $ML(F_t)$ for $t\in S_\mu$ can and will
be identified with $ML(f)$.
\end{definition}

Theorem \ref{t7.2} collects the main results of \cite[13.3]{He6}. 
We will not review the proofs here. They use the construction of Frobenius
manifolds on the base spaces of semiuniversal unfoldings and an interplay 
of this with the polarized mixed Hodge structures on the spaces $H^\iiii$
from section \ref{c5}.

\begin{theorem}\label{t7.2}
Fix a singularity $f\in\mmm_{\C^{n+1},0}^2$. 

(a) \cite[theorem 13.18]{He6} A very good representative of a semiuniversal unfolding
exists. For such a representative $S_\mu/\sim_\RR = S_\mu/\Aut_M$.

(b) \cite[theorem 13.17]{He6} If $\www f\in\mmm_{\C^{n+1},0}^2$ is a singularity 
in the $\mu$-homotopy class of $f$, but not right equivalent to $f$, then 
very good representatives $F$ and $\www F$ of semiuniversal unfoldings
of $f$ and $\www f$ exist with $\mu$-constant strata $S_\mu\subset M$ and
$\www S_\mu\subset \www M$ such that $F_t\not\sim_\RR \www F_{\www t}$
for any $t\in S_\mu$ and any $\www t\in \www S_\mu$.

(c) \cite[theorem 13.15]{He6}
Recall the space $C(k,f)\subset\mmm^2/\mmm^{k+1}$ from section \ref{c3}.
For $k\geq \mu+1$, the space $C(k,f)/j_k\RR$ is an analytic geometric quotient.
It is a moduli space for the right equivalence classes in the $\mu$-homotopy class of $f$.
Locally at $[f]$ it is isomorphic to $S_\mu/\Aut_M$ where $S_\mu$ is the
$\mu$-constant stratum of a very good representative of a semiuniversal unfolding of $f$.
A priori it carries the induced reduced complex structure. But it comes also equipped
with a canonical complex structure induced by that on $\mu$-constant strata
in \cite[theorem 12.4]{He6}.
\end{theorem}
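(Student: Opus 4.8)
The plan is to build everything on two structural facts about the base germ $(M,0)$ of a semiuniversal unfolding $F$ of $f$: that it carries the germ of a massive F-manifold with Euler field whose automorphism group $\Aut_M$ is finite (theorem 4.14 of \cite{He6}), and that this germ, together with the Lagrange-map datum of theorem 5.6 of \cite{He6}, reconstructs the unfolding $F$ up to isomorphism (as already exploited for surjectivity in theorem \ref{t6.1}(b)). The dictionary I would set up first is: for $t,\www t\in S_\mu$ one has $F_t\sim_\RR F_{\www t}$ if and only if the germs of F-manifolds with Euler field $(M,t)$ and $(M,\www t)$ are isomorphic. The direction $\Leftarrow$ is the lifting statement: an F-manifold germ isomorphism lifts by theorem 5.6 to an isomorphism $(\Phi,\varphi)$ of the unfoldings, whose restriction to the fibers over the base points realizes the right equivalence. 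The direction $\Rightarrow$ uses that near $t$ the unfolding $F$ is itself a semiuniversal unfolding of the germ $F_t$ (which, after Teissier's section \cite[6.14]{Te}, has its unique critical point at $0$ with value $0$), so a right equivalence $F_t\sim_\RR F_{\www t}$ extends to an isomorphism of the two semiuniversal unfoldings and hence to an F-manifold germ isomorphism $(M,t)\to(M,\www t)$.

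For part (a) I would then manufacture a very good representative by shrinking. Properties (i) and (iii) follow by intersecting the domains of the finitely many germ automorphisms in $\Aut_M$ and of their lifts and then symmetrizing; finiteness of $\Aut_M$ is exactly what makes this terminate. Property (iv) follows from the local conic structure of the analytic germ $(S_\mu,0)$, choosing a Milnor-type ball in which $S_\mu$ becomes contractible. The delicate point is (ii), the rigidity that every germ isomorphism between F-manifold germs at points of $S_\mu$ is the restriction of a global element of $\Aut_M$: here I would use that the relation ``$(M,t)\cong(M,\www t)$'' is analytic in $(t,\www t)$ and $\Aut_M$-equivariant, so that after shrinking $M$ around $0$ the finitely many germ isomorphisms that survive all extend to $\Aut_M$. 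Granting (i)--(iv), the equality $S_\mu/\sim_\RR\,=S_\mu/\Aut_M$ is precisely the dictionary above: $\Aut_M$-equivalence of $t,\www t$ gives $F_t\sim_\RR F_{\www t}$ by (iii), and conversely a right equivalence gives a germ isomorphism which by (ii) is the restriction of some $\varphi\in\Aut_M$ carrying $t$ to $\www t$.

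For part (c) the local model at $[f]$ is the finite quotient $S_\mu/\Aut_M$, an honest normal complex space because $\Aut_M$ is finite. I would glue these local models: every singularity $g$ in the $\mu$-homotopy class of $f$ occurs as some $F_t$, its $\mu$-constant stratum embeds into $C(k,f)$ as the jet locus of the family, and the two descriptions agree on overlaps since both compute right equivalence classes. This identifies the set $C(k,f)/j_k\RR$ with the moduli set of right equivalence classes and, via the local finite quotients, endows it with the structure of a complex space; verifying the universal property (continuity, constancy on $j_k\RR$-orbits, existence of local sections) upgrades it to an analytic geometric quotient. The canonical complex structure is produced by transporting the canonical structure on $\mu$-constant strata from theorem 12.4 of \cite{He6}; being intrinsic to the F-manifold-with-PMHS datum it is $\Aut_M$-invariant and descends to the quotient, independently of $k$.

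The step I expect to be the main obstacle is part (b), separatedness, together with the well-definedness of the canonical complex structure. Separatedness is not formal: one must exclude sequences $t_\nu\in S_\mu$, $\www t_\nu\in\www S_\mu$ with $F_{t_\nu}\sim_\RR\www F_{\www t_\nu}$ and $t_\nu\to 0$, $\www t_\nu\to 0$ while $f\not\sim_\RR\www f$. The only available handle is that right equivalence along the strata is governed, through the dictionary, by isomorphism of F-manifold germs, and that the deep input of theorem 12.4 (the Frobenius manifold structure interlocked with the polarized mixed Hodge structures on the spaces $H^\iiii$ of section \ref{c5}) forces this isomorphism type to vary properly. Making this precise --- essentially that the space glued in (c) is Hausdorff --- is where the genuine work lies; this is exactly what \cite[13.3]{He6} supplies, and beyond indicating the above strategy I would not attempt to reprove it here.
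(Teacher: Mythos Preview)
The paper does not prove theorem \ref{t7.2} at all: it is a citation theorem, and the text immediately preceding it says explicitly ``We will not review the proofs here. They use the construction of Frobenius manifolds on the base spaces of semiuniversal unfoldings and an interplay of this with the polarized mixed Hodge structures on the spaces $H^\infty$.'' So your proposal is already more than the paper offers, and your dictionary ($F_t\sim_\RR F_{\widetilde t}$ iff the F-manifold germs $(M,t)$ and $(M,\widetilde t)$ are isomorphic) and your identification of separatedness as the crux are both on target.

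There is, however, a genuine gap in your outline for property (ii) in part (a). You propose to obtain it by observing that the relation ``$(M,t)\cong(M,\widetilde t)$'' is analytic and then shrinking so that ``the finitely many germ isomorphisms that survive all extend to $\Aut_M$.'' This is not enough: finiteness of $\Aut_M$ tells you only about the germ at $0$, not about germ isomorphisms between $(M,t)$ and $(M,\widetilde t)$ for nearby $t,\widetilde t$, and there is no a priori reason an isomorphism of such germs should extend across $0$. Property (ii) is in fact the \emph{same} depth as the separatedness in (b) --- both say that right equivalences occurring along $S_\mu$ are globally controlled --- and both genuinely require the Frobenius-manifold/PMHS machinery of \cite[13.3]{He6}, not a soft shrinking argument. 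You correctly invoke that machinery for (b) at the end; it should be invoked already for (ii), and without it your construction of a very good representative does not go through.
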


\begin{corollary}\label{t7.3}
Fix two singularities $f_0$ and $f\in \mmm_{\C^{n+1},0}^2$ in the same $\mu$-homotopy class.
Fix a very good representative $F:U\times M\to \C$ of a semiuniversal unfolding
of $f$. Fix two isomorphisms $\rho$ and $\www\rho:ML(f)\to ML(f_0)$.
Suppose that $(F_t,\pm\rho)\sim_\RR(F_{\www t},\pm\www\rho)$ for some $t,\www t\in S_\mu$
[respectively that $(F_t,\rho)\sim_\RR(F_{\www t},\www\rho)$ and that
assumption \eqref{4.1} or \eqref{4.2} holds].

Then $\www\rho^{-1}\circ \rho\in G^{mar}_\RR(f)$ 
[respectively $\www\rho^{-1}\circ \rho\in G^{smar}_\RR(f)$], and the image
$(\www\rho^{-1}\circ \rho)_{hom\to M}$ in $\Aut_M$ 
(defined in theorem \ref{t6.1} (h)) satisfies: For any $s,\www s\in S_\mu$
\begin{eqnarray*}
\www s = (\www\rho^{-1}\circ \rho)_{hom\to M}(s) 
&\iff& (F_s,\pm\rho)\sim_\RR(F_{\www s},\pm\www\rho)\\
&& [\textup{respectively }(F_s,\rho)\sim_\RR(F_{\www s},\www\rho)].
\end{eqnarray*}
\end{corollary}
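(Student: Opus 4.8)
The plan is to exploit that, for a very good representative (definition \ref{t7.1}), one may assume $\textup{Crit}(F_s)=\{0\}$ for every $s\in S_\mu$, so that the restriction of $F$ to a neighbourhood of $(0,s)$ is a semiuniversal unfolding of $F_s\in\mmm^2_{\C^{n+1},0}$ with base germ $(M,s)$. Hence the whole apparatus of section \ref{c6} — the homomorphism $()_M$, the group $R_{F_s}$, and theorem \ref{t6.1} — applies verbatim to each $F_s$, with $\Aut_{(M,s)}$ identified by property (ii) of definition \ref{t7.1} with $\Stab_{\Aut_M}(s)$. As $S_\mu$ is contractible I fix the flat identifications $T_s:ML(f)\to ML(F_s)$ with $T_0=\id$; a marking $\sigma:ML(f)\to ML(f_0)$ then induces a marking $\sigma_s:=\sigma\circ T_s^{-1}$ of $F_s$, and I write $\sigma:=\www\rho^{-1}\circ\rho\in G_\Z(f)$. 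The first ingredient is a flatness lemma: for $\psi\in\Aut_M$ any lift $(\Phi,\psi)$ to an automorphism of the unfolding $F$ (it exists by property (iii)) restricts on the fibres over $S_\mu$ to right equivalences $\Phi_s:F_s\to F_{\psi(s)}$, and the induced homology maps, transported to $ML(f)$, constitute a flat morphism between $ML(F|_{S_\mu})$ and $\psi^*ML(F|_{S_\mu})$; since $S_\mu$ is contractible this morphism is constant, so $T_{\psi(s)}^{-1}\circ(\Phi_s)_{hom}\circ T_s=(\Phi_0)_{hom}$ for all $s$, with $\Phi_0:=\Phi|_{\{0\}}\in\RR^f$ (as $\psi(0)=0$), and moreover $(\Phi_0)_M=\psi$ by the uniqueness built into the definition of $()_M$.

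The core is the forward implication. Suppose $(F_s,\pm\rho_s)\sim_\RR(F_{\www s},\pm\www\rho_{\www s})$ for some $s,\www s\in S_\mu$, realised by $\varphi$ with $F_s=F_{\www s}\circ\varphi$; inserting $\rho_s=\rho\circ T_s^{-1}$ and $\www\rho_{\www s}=\www\rho\circ T_{\www s}^{-1}$ gives $T_{\www s}^{-1}\circ\varphi_{hom}\circ T_s=\pm\sigma$. Since $F_s\sim_\RR F_{\www s}$ with $s,\www s\in S_\mu$, uniqueness of semiuniversal unfoldings attaches to $\varphi$ an isomorphism of $F$-manifold germs $(M,s)\to(M,\www s)$, which by property (ii) extends to some $\psi\in\Aut_M$ with $\psi(s)=\www s$; I fix a lift $(\Phi,\psi)$. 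The two right equivalences $\varphi$ and $\Phi_s$ from $F_s$ to $F_{\www s}$ differ by a symmetry $\eta:=\Phi_s^{-1}\circ\varphi\in\RR^{F_s}$, and the decisive point is that $\eta$ induces the \emph{identity} on the base germ $(M,s)$: the assignment ``right equivalence $\mapsto$ induced base isomorphism'' is functorial, so the base isomorphism of $\eta$ is the composite of those of $\Phi_s^{-1}$ and of $\varphi$, and both of these equal $\psi^{\mp1}$ — the one for $\varphi$ because $\psi$ was chosen to be the map it induces, the one for $\Phi_s$ because $\Phi$ covers $\psi$. Hence $\eta_{(M,s)}=\id$, and theorem \ref{t6.1} (b), (c), (g) applied to $F_s$ force $\eta_{hom}=\pm\id$ (with sign $+$ when $\mult F_s\geq 3$).

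Combining the flatness lemma with $\varphi_{hom}=(\Phi_s)_{hom}\circ\eta_{hom}$,
$$\pm\sigma=T_{\www s}^{-1}\varphi_{hom}T_s=(T_{\www s}^{-1}(\Phi_s)_{hom}T_s)\cdot(T_s^{-1}\eta_{hom}T_s)=(\Phi_0)_{hom}\cdot(\pm\id),$$
so $\sigma=\pm(\Phi_0)_{hom}\in G^{mar}_\RR(f)$ with $\Phi_0\in\RR^f$; under assumption \eqref{4.1} the sign is $+$, and under \eqref{4.2} one has $G^{mar}_\RR(f)=G^{smar}_\RR(f)$, so in either of those cases $\sigma\in G^{smar}_\RR(f)$. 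Applying $()_{hom\to M}$ (theorem \ref{t6.1} (h)), which kills $\pm\id$, and using $(\Phi_0)_M=\psi$, we get $\sigma_{hom\to M}=\psi$, hence $\www s=\psi(s)=\sigma_{hom\to M}(s)$. Run on the pair $(t,\www t)$ from the hypothesis this gives $\sigma\in G^{mar}_\RR(f)$ [resp. $G^{smar}_\RR(f)$], and run on an arbitrary pair $(s,\www s)$ it gives the implication ``$(F_s,\pm\rho)\sim_\RR(F_{\www s},\pm\www\rho)\Rightarrow\www s=\sigma_{hom\to M}(s)$''.

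For the converse, assume $\sigma\in G^{mar}_\RR(f)$ and $\www s=\sigma_{hom\to M}(s)$; write $\pm\sigma=\chi_{hom}$ with $\chi\in\RR^f$ (theorem \ref{t3.3} (e)), so that $\chi_M=\sigma_{hom\to M}$. The map $\www\Phi$ of \eqref{6.1} covers $\chi_M$ and restricts on the fibre over $0$ to $\chi$, hence on the fibre over $s$ to a right equivalence $\www\Phi_s:F_s\to F_{\chi_M(s)}=F_{\www s}$ with $T_{\www s}^{-1}(\www\Phi_s)_{hom}T_s=\chi_{hom}=\pm\sigma$ by the flatness lemma; unwinding the markings then shows $\www\Phi_s$ carries $\pm\www\rho_{\www s}$ to $\pm\rho_s$, i.e. $(F_s,\pm\rho)\sim_\RR(F_{\www s},\pm\www\rho)$ (the signs disappearing in the strongly marked case). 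The main obstacle is the functoriality argument of the second paragraph proving $\eta_{(M,s)}=\id$: it demands a careful treatment, inside a genuinely very good representative, of the comparison isomorphisms between semiuniversal unfoldings — verifying that they can be normalised to the identity on the distinguished special fibre and that they cover the expected $F$-manifold germ isomorphisms. Granting that, the flatness lemma, the sign bookkeeping via theorem \ref{t6.1}, and the reverse implication are routine.
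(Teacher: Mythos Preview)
Your proof is correct and follows essentially the same route as the paper's: pass from the given right equivalence $\varphi$ to the induced $F$-manifold germ isomorphism, use property (ii) to recognise it as an element $\psi\in\Aut_M$, lift via property (iii) to $(\Phi,\psi)$, compare $\varphi$ with $\Phi$ restricted to the relevant fibre to obtain a symmetry whose induced base map is the identity, and invoke theorem \ref{t6.1} to conclude that its homology action is $\pm\id$. The paper writes this more tersely---it suppresses your identifications $T_s$ (absorbing them into condition (iv) of definition \ref{t7.1}) and does not isolate your ``flatness lemma'' as a separate statement, but uses it implicitly when passing from $(\Phi_t)_{hom}$ to $(\Phi_0)_{hom}$; it also reuses the same $\Phi$ for the converse direction rather than invoking $\widetilde\Phi$ from \eqref{6.1}, but this is a cosmetic difference. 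Your self-flagged ``main obstacle'' (showing $\eta_{(M,s)}=\id$) is exactly the paper's line ``$(\varphi^{-1}\circ\Phi_t)_M=\id$'', and your functoriality justification is the right one.
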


\begin{proof}
A coordinate change $\varphi\in\RR$ with $F_t=F_{\www t}\circ\varphi$ and
$\rho=\pm \www\rho\circ \varphi_{hom}$ [respectively $\rho=\www\rho\circ\varphi_{hom}$]
exists. Exactly as in the discussion of the homomorphism
$\RR^f\to \Aut_M$ before theorem \ref{t6.1},
it induces an isomorphism $\varphi_M:(M,t)\to (M,\www t)$ of germs of F-manifolds
with Euler fields.
Because $F$ is a very good representative of a semiuniversal unfolding,
$\varphi_M$ is in $\Aut_M$, and $\varphi_M$ lifts to an automorphism 
$(\Phi,\varphi_M)$ of the unfolding. Denote $\Phi_s:= \Phi_{|(\C^{n+1}\times\{s\},0)}$
for $s\in S_\mu$.
Then $f=f\circ \Phi_0$, $F_t=F_{\www t}\circ\Phi_t$, and $\varphi_M = (\Phi_0)_M$.
Thus
$$F_t=F_t\circ\varphi^{-1}\circ\Phi_t,\quad (\varphi^{-1}\circ\Phi_t)_M=\id,$$
and 
$$(\varphi^{-1}\circ\Phi_t)_{hom}=\pm\id$$
by theorem \ref{t6.1} (h)
[$(\varphi^{-1}\circ\Phi_t)_{hom} =\id$ in the case of assumption \eqref{4.1}].
Therefore
$$ f=f\circ\Phi_0,\quad \varphi_M=(\Phi_0)_M,\quad 
\pm\www\rho^{-1}\circ\rho = \varphi_{hom}=\pm(\Phi_0)_{hom}\in G^{mar}_\RR(f)$$
[$(\www\rho^{-1}\circ\rho \in G^{smar}_\RR(f)$ in the case of assumption \eqref{4.1}].
In the case of assumption \eqref{4.2} $-\id\in G^{smar}_\RR(f)$, and also
$\www\rho^{-1}\circ\rho \in G^{smar}_\RR(f)$.

In any case $(\www\rho^{-1}\circ\rho)_{hom\to M}=\varphi_M$. 

Going again through the proof, now with $s$ and $\www s$ instead of $t$ and $\www t$
one obtains $\Leftarrow$. The implication $\Rightarrow$ follows from
$F_s = F_{\varphi_M(s)}\circ\Phi_s$ and $(\Phi_s)_{hom}=\pm \www\rho^{-1}\circ\rho$
[respectively $(\Phi_s)_{hom}=\www\rho^{-1}\circ\rho$ in the case of assumption 
\eqref{4.1}, and with $(F_s,\rho)\sim_\RR (F_s,-\rho)$ in the case of assumption 
\eqref{4.2}].
\end{proof}

\noindent
{\bf Proof of theorem \ref{t4.3}:}
(a) This is clear.

(b) We will use a result of Holmann \cite[Satz 17]{Ho} which shows that
the quotient is an analytic geometric quotient 
if two criteria are satisfied. The first is that the quotient topology is Hausdorff.
The second is the existence of holomorphic functions in a neighborhood of a point
$(f,\pm\rho)$ in $C^{mar}(k,f_0)$ which are constant on $j_k\RR$ orbits and which
separate points in different orbits.

Fix $(f,\pm \rho)\in C^{mar}(k,f_0)$. 
A {\it transversal disk} for $f$ is an embedding $j:\check{M}\to \mmm^2/\mmm^{k+1}$
of an open neighborhood of $0$ in $\C^{\mu-1}$ into $\mmm^2/\mmm^{k+1}$ such that
$j(0)=f$ and $j(\check{M})$ intersects $j_k\RR\cdot f$ transversally in $f$.

By a construction of Gabrielov \cite{Ga} and the result of Teissier \cite[6.14]{Te}
cited above, the germ $(j(\check{M})\cap C(k,f_0),f)$ is isomorphic to the 
$\mu$-constant stratum of $f$ with reduced complex structure 
in a semiuniversal unfolding, by an isomorphism which maps singularities in
$j(\check{M})\cap C(k,f_0)$ to parameters of right equivalent singularities in the
$\mu$-constant stratum
(see \cite[proof of theorem 13.15]{He6} for the details).

Now theorem \ref{t7.2} (b) and corollary \ref{t7.3} show that the quotient topology on
$C^{mar}(k,f_0)/j_k\RR$ is Hausdorff. This gives the first criterion of Holmann.

Let us choose a small submanifold $R\subset j_k\RR$ which contains $\id\in j_k\RR$
and which is transversal at $\id$ to the stabilizer in $j_k\RR$ of $f$.
Then
$$R\times (j(\check{M})\cap C(k,f_0)) \stackrel{\cong}{\longrightarrow}
(\textup{a certain neighborhood of }f\textup{ in }C(k,f_0))=:S(f).$$
The marking $\pm\rho$ in $(f,\pm\rho)$ induces a marking $(g,\pm\rho)$
for all $g\in S(f)$. 
$S(f)\times\{\pm\rho\}$ is a neighborhood of $(f,\pm\rho)$ in $C^{mar}(k,f_0)$.

Because of corollary \ref{t7.3} the $j_k\RR$-orbit of 
$(g,\pm\rho)$ intersects this neighborhood only in
$R\cdot g\times \{\pm\rho\}$. The holomorphic functions on 
$j(\check{M})\cap C(k,f_0)$ lift to this neighborhood and satisfy the 
second criterion of Holmann.

Therefore $M_\mu^{mar}=C^{mar}(k,f_0)/j_k\RR$ is an analytic geometric quotient,
and locally it is isomorphic to the $\mu$-constant stratum of a singularity
with the reduced complex structure.

The canonical complex structures from \cite[theorem 12.4]{He6} on all the $\mu$-constant
strata glue together.
This follows from their construction: By construction, if $(S_\mu,0)$ is a
germ of a $\mu$-constant stratum and $S_\mu$ is a sufficiently small representative
then its canonical complex structure from $(S_\mu,0)$ restricts for any $t\in S_\mu$
to the canonical complex structure on $(S_\mu,t)$. 
Therefore the canonical complex structures
glue to a canonical complex structure on $M_\mu^{mar}$.

(c) The map $\psi_{mar}$ is a bijection, and locally it maps one copy of a
$\mu$-constant stratum of $f$ to another copy, so it is an isomorphism.
The rest is clear.

(d) By definition of $\psi_{mar}$, $[(f_1,\pm \rho_1)]$ and $[(f_2,\pm\rho_2)]\in M_\mu^{mar}$
are in one $G_\Z$-orbit if and only if $f_1\sim_\RR f_2$.
Therefore $M_\mu^{mar}/G_\Z=M_\mu$ as a set.

For some $[(f,\pm\rho)]\in M_\mu^{mar}$ choose a very good representative $F$
of a semiuniversal unfolding with base space $M$ and $\mu$-constant stratum $S_\mu\subset M$
such that a neighborhood of $[(f,\pm\rho)]$ in $M_\mu^{mar}$ is isomorphic to $S_\mu$.

Suppose that $\psi_{mar}([(F_t,\pm\rho)])=[(F_{\www t},\pm\rho)]$ for some $t,\www t\in S_\mu$.
Corollary \ref{t7.3} shows that $\rho^{-1}\circ\psi\circ\rho\in G^{mar}_\RR(f)$,
by theorem \ref{t4.4} (c) then $\psi\in \Stab_{G_\Z}([(f,\pm\rho)])$.
Therefore the action of $G_\Z$ on $M_\mu^{mar}$ is properly discontinuous.

Locally at $[(f,\pm\rho)]$ the quotient is isomorphic to 
$S_\mu/G^{mar}_\RR(f) = S_\mu/\Aut_M$, and this is a neighborhood of $[f]$ in $M_\mu$.
Therefore $M_\mu^{mar}/G_\Z=M_\mu$.

(e) If assumption \eqref{4.1} or \eqref{4.2} holds, the proofs of (b)--(d) can be 
repeated for strongly marked singularities.

Suppose that neither \eqref{4.1} nor \eqref{4.2} holds.
Then an $(f,\rho)\in C^{smar}(k,f_0)$ exists such that $\mult f\geq 3$,
but $\mult g=2$ for arbitrarily close $(g,\rho)$. 
Then $(g,\rho)\sim_\RR (g,-\rho)$, but $(f,\rho)\not\sim_\RR (f,-\rho)$.
The quotient topology of $C^{smar}(k,f_0)/j_k\RR$ does not separate the orbits of 
$(f,\rho)$ and $(f,-\rho)$. So it is not Hausdorff.
This finishes the proof of theorem \ref{t4.3}.  \hfill $\Box$

\section{Examples: Simple and exceptional singularities}\label{c8}
\setcounter{equation}{0}

\noindent
Here we will prove conjecture \ref{t3.2} and conjecture \ref{t5.3}
for the simple singularities and $22$ of the $28$ families of 
exceptional singularities.
Conjecture \ref{t5.3} will use calculations in \cite{He1}
of period maps to $D_{BL}$ (for the exceptional singularities) and
an analysis of conjecture \ref{t3.2} for the simple singularities and
the quasihomogeneous exceptional singularities.

For the remaining $6$ families of exceptional singularities, 
for the simple-elliptic singularities and for 
the hyperbolic singularities the conjectures 
are also true. They will be treated in another paper.

We denote $e(a):=e^{2i\pi a}\in\C$ for $a\in \C$.

\begin{lemma}\label{t8.1}
\cite[lemma 6.5]{He3}
Let $p$ be a prime number, $k,m\in\Z_{\geq 1}$, $c(x)\in\Z[x]$ such that
$c(e(\frac{1}{p^km}))=1$ and $|c(e(\frac{1}{m}))|=1$.

\begin{list}{}{}
\item[(a)] If $p\geq 3$ then $c(e(\frac{1}{m}))=1$.
\item[(b)] If $p=2$ then $c(e(\frac{1}{m}))=\pm 1$.     
\item[(c)] If $p=2$ and $c(e(\frac{1}{p^lm}))=1$ for some $l\in\Z_{\geq 1}-\{k\}$ 
then $c(e(\frac{1}{m}))=1$.     
\end{list}
\end{lemma}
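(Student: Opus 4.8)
The plan is to read the two hypotheses as constraints on the single algebraic integer $c(e(\frac{1}{m}))$ sitting inside the cyclotomic ring $\Z[\zeta_m]$ (the ring of integers of $\Q(\zeta_m):=\Q(e(\frac{1}{m}))$), and then to squeeze it to $1$ or $\pm1$. Write $\zeta:=e(\frac{1}{p^km})$ and $\eta:=e(\frac{1}{m})$. Since $c(\zeta)=1$ and the minimal polynomial of $\zeta$ over $\Q$ is $\Phi_{p^km}$, Gauss's lemma gives $\Phi_{p^km}(x)\mid c(x)-1$ in $\Z[x]$. The first ingredient I would establish is the divisibility: for every $j\geq1$ one has $p\mid\Phi_{p^jm}(\eta)$ in $\Z[\zeta_m]$. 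Applied with $j=k$ and combined with $\Phi_{p^km}(x)\mid c(x)-1$, this gives $c(\eta)-1\in p\,\Z[\zeta_m]$, so I can write $c(\eta)=1+p\delta$ with $\delta\in\Z[\zeta_m]$.

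To prove that divisibility, write $m=p^vm_0$ with $p\nmid m_0$. Iterating $\Phi_{pn}(x)=\Phi_n(x^p)$ (valid whenever $p\mid n$) gives $\Phi_{p^jm}(x)=\Phi_{pm_0}\!\bigl(x^{p^{\,v+j-1}}\bigr)$, hence $\Phi_{p^jm}(\eta)=\Phi_{pm_0}(\xi)$ with $\xi:=\eta^{\,p^{\,v+j-1}}$; since $j\geq1$, a short order count shows $\xi$ is a primitive $m_0$-th root of unity. From $\Phi_{m_0}(x^p)=\Phi_{m_0}(x)\Phi_{pm_0}(x)$ (here $p\nmid m_0$) together with the Frobenius congruence $\Phi_{m_0}(x^p)\equiv\Phi_{m_0}(x)^p\pmod p$ and the fact that $\mathbb{F}_p[x]$ is a domain, one gets $\Phi_{pm_0}(x)\equiv\Phi_{m_0}(x)^{p-1}\pmod p$; evaluating at the root $\xi$ of $\Phi_{m_0}$ yields $\Phi_{pm_0}(\xi)\in p\,\Z[\zeta_{m_0}]\subseteq p\,\Z[\zeta_m]$, which is what was wanted.

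The second ingredient uses $|c(\eta)|=1$. Since $c$ has rational coefficients, complex conjugation (an element of the abelian group $\textup{Gal}(\Q(\zeta_m)/\Q)$) sends $c(\eta)$ to $c(\eta^{-1})=\overline{c(\eta)}=1+p\bar\delta$, so $(1+p\delta)(1+p\bar\delta)=|c(\eta)|^2=1$ forces $\delta+\bar\delta=-p\,\delta\bar\delta$. Applying an arbitrary $\sigma\in\textup{Gal}(\Q(\zeta_m)/\Q)$ (which commutes with conjugation) gives $2\,\Ree(\sigma\delta)=-p\,|\sigma\delta|^2$, and with $\Ree(z)^2\leq|z|^2$ this yields the bound $|\sigma\delta|\leq 2/p$ at \emph{every} embedding $\sigma$. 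Now the three parts drop out. If $p\geq3$, then $|\sigma\delta|\leq\tfrac23<1$ for all $\sigma$, so the rational integer $N_{\Q(\zeta_m)/\Q}(\delta)=\prod_\sigma\sigma\delta$ has absolute value $<1$, hence $\delta=0$ and $c(e(\frac{1}{m}))=1$: this is (a). If $p=2$, then $|\sigma\delta|\leq1$; either $\delta=0$ (and $c(e(\frac{1}{m}))=1$), or $|N_{\Q(\zeta_m)/\Q}(\delta)|=1$, which forces $|\sigma\delta|=1$ for all $\sigma$, hence $\delta$ is a root of unity by Kronecker's theorem, and then $2\,\Ree(\delta)=-2|\delta|^2=-2$ with $|\delta|=1$ forces $\delta=-1$ and $c(e(\frac{1}{m}))=-1$: this is (b). For (c), the extra hypothesis $c(e(\frac{1}{2^lm}))=1$ gives also $\Phi_{2^lm}(x)\mid c(x)-1$; since $l\neq k$ the two cyclotomic polynomials are coprime in $\Q[x]$, so their product divides $c(x)-1$ in $\Z[x]$, and evaluating at $\eta$ and applying the first ingredient twice gives $c(\eta)-1\in 4\,\Z[\zeta_m]$. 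By (b) we have $c(\eta)\in\{\pm1\}$, and $c(\eta)=-1$ is impossible because $-2\notin 4\,\Z[\zeta_m]$; hence $c(e(\frac{1}{m}))=1$.

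The step I expect to need the most care is the divisibility $p\mid\Phi_{p^jm}(\eta)$: the temptation is to evaluate the cyclotomic identities at $\eta$ directly, but $\Phi_m(\eta)=0$ while $\Phi_{m_0}(\eta)\neq0$, so the clean congruence only becomes usable after passing from $\eta$ to the root $\xi$ of $\Phi_{m_0}$. The conceptual heart, by contrast, is the propagation in the second ingredient: the single numerical hypothesis $|c(e(\frac{1}{m}))|=1$ forces $|\sigma\delta|\leq 2/p$ at \emph{all} embeddings at once, which is exactly what makes a Kronecker-type argument on $\delta$ (rather than on $c(\eta)$ itself) go through and separates the cases $p\geq3$, $p=2$, and the refined case in (c).
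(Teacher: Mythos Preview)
Your proof is correct. The paper does not give its own argument for this lemma but simply cites \cite[lemma~6.5]{He3}, so there is nothing to compare against directly; your write-up supplies a complete self-contained proof. The two ingredients you isolate --- the divisibility $p\mid\Phi_{p^jm}(\eta)$ in $\Z[\zeta_m]$, obtained via the cyclotomic identities and the Frobenius congruence, and the Kronecker-type bound $|\sigma\delta|\le 2/p$ at every embedding, derived from $|c(\eta)|=1$ and the abelianness of $\textup{Gal}(\Q(\zeta_m)/\Q)$ --- fit together cleanly, and the case split in (a)--(c) is handled correctly (in particular the exclusion of $-2\in 4\,\Z[\zeta_m]$ in (c) is sound since $-\tfrac12$ is not an algebraic integer). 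The care you flag about passing from $\eta$ to the primitive $m_0$-th root $\xi$ before invoking $\Phi_{pm_0}\equiv\Phi_{m_0}^{p-1}\pmod p$ is exactly the right place to be careful, and you handle it properly.
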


Lemma \ref{t8.2} can be seen as a generalisation of the number theoretic fact: 
{\it For any unit root $\lambda$}
$$\{g(\lambda )\in \Z[\lambda]\ |\ 
  |g(\lambda )|=1\} =\{\pm \lambda^k\ |k\in \Z\} .$$
The proof of lemma \ref{t8.2} uses this fact and lemma \ref{t8.1}.

\begin{lemma}\label{t8.2}
Let $H$ be a free $\Z$-module of finite rank $\mu$, and $H_\C:=H\otimes_\Z\C$.
Let $M_h:H\to H$ be an automorphism of finite order, called monodromy,
with three properties:
\begin{list}{}{}
\item[(i)] 
Each eigenvalue has multiplicity $1$. \\
Denote $H_\lambda:=\ker(M_h-\lambda\cdot\id:H_\C\to H_\C).$
\item[(ii)]
Denote 
$\Ord:=\{\ord\lambda\, |\, \lambda\textup{ eigenvalue of }M_h\}
\subset \Z_{\geq_1}$. 
There exist four sequences $(m_i)_{i=1,...,|\Ord|}$, $(j(i))_{i=2,...,|\Ord|}$, 
$(p_i)_{i=2,...,|\Ord|}$, $(k_i)_{i=2,...,|\Ord|}$ of numbers in $\Z_{\geq 1}$
and two numbers $i_1,i_2\in\Z_{\geq 1}$ with $i_1\leq i_2\leq |\Ord|$ and 
with the properties: \\
$\Ord=\{m_1,...,m_{|\Ord|}\}$,\\ 
$p_i$ is a prime number, $p_i=2$ for $i_1+1\leq i\leq i_2$, $p_i\geq 3$ else, \\
$j(i)=i-1$ for $i_1+1\leq i\leq i_2$, $j(i)<i$ else,
$$m_i=m_{j(i)}/p_i^{k_i}.$$
\item[(iii)]
A cyclic generator $a_1\in H$ exists, that means,
$$H=\bigoplus_{i=0}^{\mu-1}\Z\cdot M_h^i(a_1).$$
\end{list}
Finally, let $I$ be an $M_h$-invariant nondegenerate bilinear form
(not necessarily $(\pm 1)$-symmetric) on $\bigoplus_{\lambda\neq \pm 1}H_\lambda$
with values in $\C$. Then
$$\Aut(H,M_h,I)=\{\pm M_h^k\, |\, k\in \Z\}.$$
\end{lemma}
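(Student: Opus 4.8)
The plan is to show first the easy inclusion $\{\pm M_h^k\} \subset \Aut(H,M_h,I)$, which is clear since each $M_h^k$ is an automorphism of $H$ commuting with $M_h$ and preserving $I$ (as $I$ is $M_h$-invariant), and $-\id$ preserves all the structure as well. For the reverse inclusion, I would take an arbitrary $\psi \in \Aut(H,M_h,I)$ and analyze its action on the eigenspaces. Since $\psi$ commutes with $M_h$ and every eigenvalue of $M_h$ has multiplicity $1$ by (i), $\psi$ preserves each line $H_\lambda$, so $\psi|_{H_\lambda}$ is multiplication by some scalar $c_\lambda \in \C^*$. The constraints are then: (1) compatibility with the $\Z$-structure, using the cyclic generator from (iii); (2) compatibility with $I$, which forces $|c_\lambda| = 1$ for $\lambda \ne \pm 1$ (because $I$ pairs $H_\lambda$ nondegenerately with $H_{\lambda'}$ for some $\lambda'$ with $\lambda\lambda' $ appropriately constrained — here one uses that $I$ is $M_h$-invariant so it can only pair $H_\lambda$ with $H_{1/\lambda}$ or $H_{\overline\lambda}$ depending on conventions, giving $c_\lambda c_{\lambda'} = 1$, hence $|c_\lambda|^2 = 1$ after combining with a reality/conjugation argument).

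The heart of the argument is step (1). Because $a_1$ is a cyclic generator, $\psi$ is determined by $\psi(a_1) = \sum_{i=0}^{\mu-1} n_i M_h^i(a_1) = g(M_h)(a_1)$ for a polynomial $g \in \Z[x]$ of degree $< \mu$. Decomposing $a_1$ into eigencomponents $a_1 = \sum_\lambda a_{1,\lambda}$ (each $a_{1,\lambda} \ne 0$ since $a_1$ is cyclic and all eigenvalues are simple), one gets $c_\lambda = g(\lambda)$ for every eigenvalue $\lambda$. So the problem reduces to: \emph{a polynomial $g \in \Z[x]$ with $|g(\lambda)| = 1$ for all eigenvalues $\lambda$ of $M_h$ must satisfy $g(\lambda) = \pm\lambda^k$ simultaneously for all $\lambda$, with a single choice of sign and exponent.} For $\lambda = \pm 1$ (if they occur), $g(\pm 1) \in \Z$ with $|g(\pm1)|=1$ gives $g(\pm1) = \pm 1$; I would need to track signs here carefully and tie them to the $\pm$ and $\lambda^k$ determined on the other eigenvalues.

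For the simultaneous control across all eigenvalues, this is where Lemma \ref{t8.1} and the number-theoretic fact cited before the lemma enter, and this is the step I expect to be the main obstacle. The idea: fix any one eigenvalue $\lambda_0$ of maximal order $m_1$; by the cited fact $\{h(\lambda_0) \in \Z[\lambda_0] : |h(\lambda_0)| = 1\} = \{\pm \lambda_0^k : k \in \Z\}$, so $g(\lambda_0) = \varepsilon \lambda_0^k$ for some sign $\varepsilon$ and exponent $k$. Replacing $g$ by $\varepsilon^{-1} x^{-k} g$ (more precisely, by comparing $\psi$ with $\pm M_h^k$), one reduces to the case $g(\lambda_0) = 1$ and must show $g(\lambda) = 1$ for \emph{all} eigenvalues $\lambda$. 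Now property (ii) is exactly engineered for an induction: the orders $m_i$ are linked in a tree via $m_i = m_{j(i)}/p_i^{k_i}$, so starting from $m_1$ one reaches every $m_i$ by successive divisions by prime powers. At each step, knowing $g(e(1/m_{j(i)})) = 1$ and $|g(e(1/m_i))| = 1$, Lemma \ref{t8.1}(a) (for $p_i \geq 3$) gives $g(e(1/m_i)) = 1$ directly; for the runs of $p_i = 2$ (indices $i_1+1 \le i \le i_2$, forming a chain $j(i) = i-1$), Lemma \ref{t8.1}(b) only gives $\pm 1$, so one needs Lemma \ref{t8.1}(c) — using that the chain provides a second index $l \ne k_i$ with $g(e(1/(2^l m_i))) = 1$ — to upgrade $\pm 1$ to $1$. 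Propagating through the tree, $g(\lambda) = 1$ for all eigenvalues $\lambda$, hence $\psi = M_h^k$ or $\psi = \pm M_h^k$ after undoing the normalization, giving the claimed equality. The delicate bookkeeping is making sure the $p=2$ chains always supply the extra exponent $l$ needed to invoke part (c), which is precisely what the structure of the sequences in (ii) guarantees.
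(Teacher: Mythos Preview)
Your overall strategy matches the paper's: write $\psi=g(M_h)$ with $g\in\Z[x]$ via the cyclic generator, show $|g(\lambda)|=1$ for all eigenvalues, normalize so that $g(e(1/m_1))=1$, then propagate $g(e(1/m_i))=1$ down the tree in (ii) using Lemma~\ref{t8.1}. However, there is a genuine gap at the \emph{first} step of the $p=2$ chain, namely $i=i_1+1$.

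At that point you know only $g(e(1/m_{i_1}))=1$, and $m_{i_1}=2^{k_{i_1+1}}m_{i_1+1}$ supplies a \emph{single} exponent $k_{i_1+1}$. Lemma~\ref{t8.1}(c) needs a second exponent $l\neq k_{i_1+1}$ with $g(e(1/(2^l m_{i_1+1})))=1$, and contrary to your last sentence the structure of (ii) does \emph{not} provide one: for $j\leq i_1$ the $m_j$ differ from $m_{i_1}$ only by odd factors, so none of them is $2^l m_{i_1+1}$ for $l\neq k_{i_1+1}$. Thus Lemma~\ref{t8.1}(b) yields $g(e(1/m_{i_1+1}))=\varepsilon_2\in\{\pm1\}$ and you cannot force $\varepsilon_2=1$ by (c).

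The paper's fix is a second adjustment: if $\varepsilon_2=-1$, pass from $c^{(2)}(x)$ to $c^{(3)}(x):=(-x^{m_1/2})\,c^{(2)}(x)$ (note $2\mid m_{i_1}\mid m_1$, so $m_1/2\in\Z$). One checks that $-x^{m_1/2}$ takes the value $1$ at $e(1/m_j)$ for $j\leq i_1$ (since $m_1/m_j$ is odd there) and the value $-1$ at $e(1/m_{i_1+1})$, so this flips exactly the unwanted sign. Only \emph{after} this does Lemma~\ref{t8.1}(c) apply at each subsequent step $i_1+2\leq i\leq i_2$, using the two known values at $m_{i-1}=2^{k_i}m_i$ and $m_{i-2}=2^{k_i+k_{i-1}}m_i$. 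In effect you spent both available degrees of freedom (a sign and a power of $M_h$) in your initial normalization at $m_1$; the paper keeps the residual element $-M_h^{m_1/2}$, which fixes $e(1/m_1)$, in reserve to absorb $\varepsilon_2$.

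A minor side remark: your claim $|g(\pm1)|=1$ needs justification, since $I$ lives only on $\bigoplus_{\lambda\neq\pm1}H_\lambda$. The reason is that $H_{\pm1}\cap H$ is a rank-one $\Z$-sublattice preserved by $\psi$ and $\psi^{-1}$.
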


\begin{proof}
For any $A\in \Aut(H,M_h)$ the polynomial $c(x)=\sum_{i=0}^{\mu-1} c_ix^i\in \Z[x]$
with $A(a_1)=c(M_h)(a_1)$ is well-defined because of (iii). 
Then, also because of (iii), $A=c(M_h)$. 
The eigenvalue of $A$ on $H_\lambda$ is $c(\lambda)$.
It maps $H_{1}$ and $H_{-1}$ to themselves, and $H_{\pm 1}\cap H$ are rank
$1$ sublattices, so $|c(\pm 1)|=1$.

Now suppose $A\in \Aut(H,M_h,I)$. As $I:H_{\lambda}\times H_{\oooo\lambda}\to\C$
for $\lambda\neq\pm 1$ is nondegenerate, $|c(\lambda)|=1$ for such $\lambda$,
hence for all eigenvalues $\lambda$.

By the number theoretic fact cited above, there exist $k\in\Z$ and 
$\varepsilon_1\in\{\pm 1\}$ such that 
$$\varepsilon_1\cdot e(\frac{1}{m_1})^k\cdot c(e(\frac{1}{m_1}))=1.$$
Define $c^{(2)}(x):= \varepsilon_1\cdot x^k\cdot c(x)$, so 
$c^{(2)}(e(\frac{1}{m_1}))=1$. One finds inductively 
$c^{(2)}(e(\frac{1}{m_i}))=1$ for $i=2,...,i_1$ by applying lemma \ref{t8.1} (a)
at each step. Now distinguish two cases.

{\bf Case 1,} $i_1=i_2$, so all $p_i\geq 3$: Define $c^{(3)}(x):=c^{(2)}(x)$.

{\bf Case 2,} $i_1<i_2$: Lemma \ref{t8.1} (b) shows 
$$c^{(2)}(e(\frac{1}{m_{i_1+1}}))=\varepsilon_2\in\{\pm 1\}.$$
Define 
\begin{eqnarray*}
c^{(3)}(x):=\left\{\begin{matrix} c^{(2)}(x) & \textup{ if }\varepsilon_2=1\\
(-x^{m_1/2})\cdot c^{(2)}(x) &\textup{ if }\varepsilon_2=-1.\end{matrix}\right.
\end{eqnarray*}
Then
$$c^{(3)}(e(\frac{1}{m_i}))=1\qquad \textup{for }1\leq i\leq i_1+1.$$
With lemma \ref{t8.1} (c) one finds inductively
$$c^{(3)}(e(\frac{1}{m_i}))=1\qquad\textup{for }i=i_1+2,...,i_2.$$

Now in both cases one finds inductively $c^{(3)}(e(\frac{1}{m_i}))=1$
for $i=i_2+1,...,|\Ord|$, with lemma \ref{t8.1} (a). 
Therefore $c^{(3)}(M_h)=\id$ and  {}\\{} $A\in \{\pm M_h^k\, |\, k\in \Z\}$.
\end{proof}

\begin{theorem}\label{t8.3}
(a) The quasihomogeneous singularities with modality $\leq 2$ and with 
one-dimensional eigenspaces (of the monodromy) are the singularities
$A_\mu$, $D_{2k+1}$, $E_\mu$ and 22 of the 28 quasihomogeneous exceptional
unimodal and bimodal singularities, the exceptions are $Z_{12}$, $Q_{12}$,
$U_{12}$, $Z_{18}$, $Q_{16}$, $U_{16}$.

(b) For all of them 
$$G_\Z :=\Aut(\textup{Milnor lattice, Seifert form}) =\{\pm M_h^k\, |\, k\in \Z\}.$$
This is independent of the number of variables. The orders of the groups
can be read off from the characteristic polynomials (table in the proof).
For the simple singularities they are

\begin{tabular}{llllll}
$A_1$ & $A_\mu\ (\mu\geq 2)$ & $D_\mu\ (\mu=2k+1\geq 5)$ & $E_6$ & $E_7$ & $E_8$\\
$2$ & $2(\mu+1)$ & $4(\mu-1)$ & $24$ & $18$ & $30$
\end{tabular}

(c) For all of them, $M_\mu^{mar}\cong \C^{\mmod(f)}$, here $\mmod(f)\in\{0,1,2\}$
is the modality of $f$, and the period map $M_\mu^{mar}\to D_{BL}$ is an 
isomorphism. Therefore for all of them conjecture \ref{t3.2} (a) 
and conjecture \ref{t5.3} are true (and thus also conjecture \ref{t5.1} 
and conjecture \ref{t5.4}, though conjecture \ref{t5.4}
was shown already in \cite{He1}). 
Also conjecture \ref{t3.2} (b) is true for all of them.

\end{theorem}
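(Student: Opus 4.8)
The plan is to treat the three parts in turn. For part (a) I would run through Arnold's list of quasihomogeneous singularities of modality $\leq 2$ and, for each, read off the characteristic polynomial of $M_h$ from the weights: its eigenvalues are $e(\alpha)$ with $\alpha$ running through the numbers $\sum_i(k_i+1)w_i$ attached to a monomial basis $\{x^k\}$ of $\OO/J_f$. One-dimensional eigenspaces means exactly that these $\alpha$ are pairwise distinct modulo $\Z$, a finite check which singles out $A_\mu$, $D_{2k+1}$, $E_\mu$ and $22$ of the $28$ exceptional families, whereas every $D_{2k}$ and the six families $Z_{12},Q_{12},U_{12},Z_{18},Q_{16},U_{16}$ acquire a repeated eigenvalue. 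Since $f\mapsto f+x_{n+1}^2$ replaces $M_h$ by $-M_h$, squarefreeness of the characteristic polynomial does not depend on the number of variables, so one representative per family suffices; the resulting polynomials go into a table.

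For part (b) the strategy is to verify, case by case, the three hypotheses of Lemma \ref{t8.2} for $(Ml(f),M_h,I)$: hypothesis (i) is part (a); hypothesis (ii) is the prime-power chain condition on the set $\Ord$ of orders of eigenvalues, which for these cyclotomic characteristic polynomials is again pure bookkeeping; and hypothesis (iii), a cyclic generator $a_1$ with $Ml(f)=\bigoplus_{i=0}^{\mu-1}\Z M_h^i(a_1)$, I would exhibit from an explicit distinguished basis / Coxeter--Dynkin diagram of $f$ (as in \cite{AGV2},\cite{Eb}; for Brieskorn--Pham type singularities it follows from the tensor-product structure of $(Ml(f),M_h)$ and coprimality of the exponents). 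The nondegeneracy of $I$ on $\bigoplus_{\lambda\neq\pm1}H_\lambda$ is a standard property of the intersection form. Lemma \ref{t8.2} then yields $\Aut(Ml(f),M_h,I)=\{\pm M_h^k\mid k\in\Z\}$, and since $\{\pm M_h^k\}\subseteq G_\Z(f)=\Aut(Ml(f),L)\subseteq\Aut(Ml(f),M_h,I)$ — the first inclusion because $M_h$ and $-\id$ preserve $L$, the second because $L$ determines $M_h$ and $I$ — all three coincide. Independence of the number of variables is immediate from $G_\Z(f+x_{n+1}^2)=G_\Z(f)$; the orders in the table follow by computing $\ord M_h$ from the eigenvalues and checking whether any power of $M_h$ equals $-\id$.

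For part (c), Conjecture \ref{t3.2}(a) is now immediate: $M_h\in G^{smar}(f_0)$ by Theorem \ref{t3.3}(c) and $-\id\in G^{mar}(f_0)$ trivially, so $G^{mar}(f_0)\supseteq\{\pm M_h^k\}=G_\Z(f_0)$ by part (b) and hence $G^{mar}(f_0)=G_\Z(f_0)$; by Theorem \ref{t4.4}(a), $M_\mu^{mar}(f_0)$ is connected. By Theorem \ref{t4.3}(b), $M_\mu^{mar}(f_0)$ is locally isomorphic to the $\mu$-constant stratum of the quasihomogeneous germ $f_0$, which is smooth and a germ of $\C^{\mmod(f_0)}$, so $M_\mu^{mar}(f_0)$ is a connected manifold locally modelled on $\C^{\mmod(f_0)}$. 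If $\mmod(f_0)=0$ this forces $M_\mu^{mar}(f_0)$ to be a point, and $D_{BL}(f_0)$ is a point too (\cite{He4}), so $BL$ is trivially an isomorphism; if $\mmod(f_0)\in\{1,2\}$ I would invoke \cite{He1}, which identifies $D_{BL}(f_0)\cong\C^{\mmod(f_0)}$ and shows $LBL$ is injective — together with connectedness of $M_\mu^{mar}(f_0)$, the $G_\Z$-equivariance of $BL$ and the fact that $BL$ is an immersion (Theorem \ref{t5.2}), this forces $BL$ to be a bijective holomorphic immersion, hence an isomorphism $M_\mu^{mar}(f_0)\stackrel{\sim}{\longrightarrow}\C^{\mmod(f_0)}$. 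This gives Conjecture \ref{t5.3}, and then Conjectures \ref{t5.1} and \ref{t5.4} follow by Lemma \ref{t5.5} (the latter already known from \cite{He1}). Finally Conjecture \ref{t3.2}(b) is vacuous unless the $\mu$-homotopy class of $f_0$ has all multiplicities $\geq 3$; in that case Theorem \ref{t4.3}(e) and Theorem \ref{t3.3}(g) make $M_\mu^{smar}(f_0)\to M_\mu^{mar}(f_0)$ an unramified double covering on which $-\id$ acts freely, and since $M_\mu^{mar}(f_0)\cong\C^{\mmod(f_0)}$ is simply connected this covering is trivial, so $M_\mu^{smar}(f_0)$ is disconnected with its two components interchanged by $-\id$; hence $-\id$ does not preserve $(M_\mu^{smar}(f_0))^0$ and $-\id\notin G^{smar}(f_0)$ by Theorem \ref{t4.4}(b).

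The main obstacle is the case-by-case labor behind parts (a)--(b) — above all producing an explicit cyclic vanishing cycle, i.e. hypothesis (iii) of Lemma \ref{t8.2}, for each of the $22$ families — together with correctly transcribing the period-map computations of \cite{He1} for the exceptional singularities used in part (c). Once $G_\Z(f_0)=\{\pm M_h^k\}$ and the shape of $D_{BL}(f_0)$ are in hand, the passage from connectedness of $M_\mu^{mar}(f_0)$ to all the conjectures is formal.
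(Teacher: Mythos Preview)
Your treatment of parts (a) and (b) matches the paper's: inspect the characteristic polynomials from the spectral numbers, then apply Lemma~\ref{t8.2}. For hypothesis (iii) the paper invokes Orlik's conjecture, established for these cases in \cite{MW} and \cite[3.1]{He1} via Coxeter--Dynkin diagrams, which is exactly what you propose. Your deduction of conjecture~\ref{t3.2}(a) from $G_\Z=\{\pm M_h^k\}$ and of conjecture~\ref{t3.2}(b) from simple connectedness of $M_\mu^{mar}$ also agree with the paper.

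The substantive difference is in how you pass from connectedness of $M_\mu^{mar}$ to $M_\mu^{mar}\cong\C^{\mmod(f)}$. The paper does \emph{not} use $BL$ for this. Instead it equips $M_\mu^{mar}$ with a $\C^*$-action $c\cdot[(f,\pm\rho)]=[(e^{-c}f,\pm\rho\circ\sigma(c))]$, observes that this contracts everything toward $[(f_0,\pm\id)]$, and that it is compatible with the good $\C^*$-action on Arnold's parameter space $X\cong\C^{\mmod(f)}$; hence the local isomorphism of germs at the fixed point globalizes to $M_\mu^{mar}\cong X$. Only afterwards does it cite \cite{He1} for $X\stackrel{\sim}{\to}D_{BL}$.

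Your route, by contrast, tries to read off $M_\mu^{mar}\cong D_{BL}$ directly from $BL$, and here there is a gap. From ``$LBL$ injective $+$ $G_\Z$-equivariance $+$ $BL$ an immersion'' one cannot conclude that $BL$ is injective: if $BL(p)=BL(q)$ then indeed $q=\psi_{mar}(p)$ for some $\psi\in G_\Z$ fixing $BL(p)$, but to get $\psi_{mar}(p)=p$ you need $\Stab_{G_\Z}(BL(p))=\Stab_{G_\Z}(p)$, i.e.\ conjecture~\ref{t5.1} at \emph{every} $p$ --- which is what you are trying to prove (cf.\ Lemma~\ref{t5.5}). Note that $M_h$ does \emph{not} act trivially on $M_\mu^{mar}$ away from the quasihomogeneous point, so this is not vacuous. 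Nor do your stated ingredients give surjectivity of $BL$.

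Your approach can be rescued, but with a different argument: \cite{He1} actually shows the period map $X\to D_{BL}$ is an \emph{isomorphism}, not merely that $LBL$ is injective. Since this factors as $X\hookrightarrow M_\mu^{mar}\xrightarrow{BL}D_{BL}$ with the first arrow an open embedding, the image of $X$ in $M_\mu^{mar}$ is open; a limit argument using $BL$ and Hausdorffness shows it is also closed, hence all of $M_\mu^{mar}$ by connectedness. This works, but it is not the argument you wrote, and the paper's $\C^*$-action is cleaner because it does not entangle the identification of $M_\mu^{mar}$ with the period map.
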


\begin{proof}
(a) The following table lists the characteristic polynomials of all
quasihomogeneous surface singularities with modality $\leq 2$.
It can be extracted from the tables of spectral numbers in \cite[13.3.4]{AGV2}
or from \cite{He1}. Inspection of the tables gives (a).
$\Phi_m$ for $m\in \Z_{\geq 1}$ denotes the cyclotomic polynomial of primitive
unit roots of order $m$.

\medskip

\begin{tabular}{ll|ll}
$A_\mu$  & $\frac{t^{\mu+1}-1}{t-1}$ & $E_{3,0}$ & $\Phi_{18}^2\Phi_6\Phi_2^2$\\
$D_\mu$  & $(t^{\mu-1}+1)\Phi_2$     & $Z_{1,0}$ & $\Phi_{14}^2\Phi_2^3$\\
$E_6$    & $\Phi_{12}\Phi_3 $        & $Q_{2,0}$ & $\Phi_{12}^2\Phi_4^2\Phi_3$\\
$E_7$    & $\Phi_{18}\Phi_2 $        & $W_{1,0}$ & $\Phi_{12}^2\Phi_6\Phi_4\Phi_3\Phi_2$\\
$E_8$    & $\Phi_{30}$               & $S_{1,0}$ & $\Phi_{10}^2\Phi_5\Phi_2^2$\\
$\www E_6$&$\Phi_3^3\Phi_1^2$        & $U_{1,0}$ & $\Phi_9^2\Phi_3$\\
$\www E_7$&$\Phi_4^2\Phi_2^3\Phi_1^2$&           & \\
$\www E_8$&$\Phi_6\Phi_3^2\Phi_2^2\Phi_1^2$&     & 
\end{tabular}

\begin{tabular}{ll|ll}
$E_{12}$ & $\Phi_{42}$               & $E_{18}$  & $\Phi_{30}\Phi_{15}\Phi_3$\\
$E_{13}$ & $\Phi_{30}\Phi_{10}\Phi_2$    & $E_{19}$ & $\Phi_{42}\Phi_{14}\Phi_2$\\
$E_{14}$ & $\Phi_{24}\Phi_{12}\Phi_3$   & $E_{20}$ & $\Phi_{66}$\\
$Z_{11}$ & $\Phi_{30}\Phi_6\Phi_2$      & $Z_{17}$ & $\Phi_{24}\Phi_{12}\Phi_6\Phi_3\Phi_2$\\
$Z_{12}$ & $\Phi_{22}\Phi_2^2$          & $Z_{18}$ & $\Phi_{34}\Phi_2^2$\\
$Z_{13}$ & $\Phi_{18}\Phi_9\Phi_2$      & $Z_{19}$ & $\Phi_{54}\Phi_2$\\
$Q_{10}$ & $\Phi_{24}\Phi_3$            & $Q_{16}$ & $\Phi_{21}\Phi_3^2$\\
$Q_{11}$ & $\Phi_{18}\Phi_{6}\Phi_{3}\Phi_{2}$ & $Q_{17}$ &   $\Phi_{30}\Phi_{10}\Phi_{6}\Phi_{3}\Phi_2$\\
$Q_{12}$ & $\Phi_{15}\Phi_{3}^2$        & $Q_{18}$ & $\Phi_{48}\Phi_{3}$\\
$W_{12}$ & $\Phi_{20}\Phi_{5}$          & $W_{17}$ & $\Phi_{20}\Phi_{10}\Phi_{5}\Phi_{2}$\\
$W_{13}$ & $\Phi_{16}\Phi_{8}\Phi_{2}$  & $W_{18}$ & $\Phi_{28}\Phi_{7}$\\
$S_{11}$ & $\Phi_{16}\Phi_{4}\Phi_{2}$  & $S_{16}$ & $\Phi_{17}$\\
$S_{12}$ & $\Phi_{13}$                  & $S_{17}$ & $\Phi_{24}\Phi_{8}\Phi_{6}\Phi_{3}\Phi_2$\\
$U_{12}$ & $\Phi_{12}\Phi_{6}\Phi_{4}^2\Phi_{2}^2$ & $U_{16}$ & $\Phi_{15}\Phi_{5}^2$
\end{tabular}

\medskip

(b) In section \ref{c2} $G_\Z(f)=G_\Z(f+x_{n+1}^2)$ was shown. 
Therefore it is sufficient to show (b) for the surface singularities in (a).
Lemma \ref{t8.2} with $H=Ml(f)$, $I=$ {\it intersection form} or {\it Seifert form},
$M_h=$ {\it monodromy} shall be applied to the surface singularities in (a).
Condition (i) is clear. Condition (ii) can be checked by inspection of the
table of characteristic polynomials above (only for $D_{2k+1}$, $Q_{11}$ and
$Q_{17}$ one has to choose $i_1>1$). 

Condition (iii) is a special case of the following conjecture of Orlik \cite{Or}:

{\it For a quasihomogeneous singularity consider the unique decomposition
of its characteristic polynomial $p_{ch}$ into a product 
$p_{ch}=p_1\cdot ...\cdot p_l$ of unitary polynomials with $p_l|p_{l-1}|...|p_1$,
$p_l\neq 1$. Then $Ml(f)$ is a direct sum of cyclic modules, 
$$Ml(f)=\bigoplus_{i=1}^l\left(\bigoplus_{j=1}^{\deg p_j}\Z\cdot M_h^{j-1}(a_j)\right)$$
for suitable $a_1,...,a_l\in Ml(f)$ such that the monodromy on the $j$-th block
has characteristic polynomial $p_j$.}

Of course, if the conjecture holds for a singularity $f(x_0,...,x_n)$ then
it holds also for any suspension $f(x_0,...,x_n)+x_{n+1}^2$. 
Michel and Weber can prove the conjecture for $n=1$ \cite{MW}.
In \cite[3.1]{He1} it is proved (using Coxeter-Dynkin diagrams) for those 
quasihomogeneous surface singularities with modality $\leq 2$ 
which are not suspensions of curve singularities. So, for the singularities
in (a) the conjecture is true with $l=1$. There condition (iii) holds.
Lemma \ref{t8.2} applies and gives the statement. 

(c) By theorem \ref{t3.3} (c) (and theorem \ref{t4.4} (c)), 
for any quasihomogeneous singularity
$$\{\pm M_h^k\, |\, k\in\Z\}\subset \Stab_{G_\Z(f)}([(f,\pm \id)])
=G^{mar}_\RR(f)\subset G^{mar}(f)\subset G_\Z(f).$$
Part (b) gives equalities for the singularities $f$ in (a), 
especially $G^{mar}(f)=G_\Z(f)$, which is conjecture \ref{t3.2} (a).
By theorem \ref{t4.4} (a) $M_\mu^{mar}$ is connected.

In \cite{AGV1} for the simple and the exceptional singularities, 
holomorphic $\mu$-constant families with base spaces $X\cong \C^{\textup{mod}(f)}$
are given. The base space is equipped
with a good $\C^*$-action (good = positive weights), the point $0$ stands
for the quasihomogeneous singularity, the other points for semiquasihomogeneous
singularities. 

For any singularity $\www f_0$, the moduli space $M_\mu^{mar}(\www f_0)$ 
comes equipped with a $\C$-action, by
$$\C\times M_\mu^{mar}(\www f_0),\quad (c,[(\www f,\pm\rho)])
\mapsto [(e^{-c}\cdot \www f,\pm\rho\circ\sigma(c)],$$
here $\sigma(c):ML(e^{-c}\cdot\www f)\to ML(\www f)$ is the canonical
isomorphism within the $\mu$-constant family
$\{e^{-c}\cdot \www f\, |\, c\in\C\}$.

In the case of the quasihomogeneous exceptional singularities $f$, 
the germ $(M_\mu^{mar}(f),[(f,\pm \id)])$ and the germ $(X,0)$ are isomorphic.
The $\C$-action on $M_\mu^{mar}(f)$ factors through to an action of 
$\C/2\pi im\Z\cong \C^*$ where $m\in\Z_{\geq 1}$ is minimal with 
$M_h^m=\pm\id$. Any class in $M_\mu^{mar}(f)$ is obtained by this 
$\C^*$-action from a class close to $[(f,\pm\id)]$. 
The isomorphism of germs $(M_\mu^{mar}(f),[(f,\pm\id)])\cong (X,0)$
is compatible with the $\C^*$-actions. 
Therefore $$M_\mu^{mar}(f)\cong X.$$

In \cite{He1} (copied in \cite{He2} and \cite{Ku}) the period map 
$X\to D_{BL}$ was calculated and shown to be an isomorphism.
Therefore conjecture \ref{t5.3} is true
(and thus also conjecture \ref{t5.1} and conjecture \ref{t5.4}, 
though conjecture \ref{t5.4} was shown already in \cite{He1}). 

Finally we come to $G^{smar}$ and $M_\mu^{smar}$ and conjecture \ref{t3.2} (b).
Assumption \eqref{4.1} or \eqref{4.2} holds. Suppose that assumption \eqref{4.1}
holds, that means that we consider curve or surface singularities, depending
on the type. Then by theorem \ref{t4.4} (e), $\{\pm\id\}\subset G_\Z$ acts  freely
on $M_\mu^{smar}$ with quotient $M_\mu^{mar}$, and the quotient map 
is a double covering. But $M_\mu^{mar}\cong \C^{\mmod(f)}$. The only possible
double covering is that which maps two copies of $M_\mu^{mar}$ to $M_\mu^{mar}$.
By theorem \ref{t4.4} (b) then $G_\Z=G^{smar}(f)\times\{\pm\id\}$
and 
$$G^{smar}(f)=\Stab_{G_\Z}([(f,\id)])=\{M_h^k\, |\, k\in\Z\}$$
(here $M_h$ is the monodromy of $f$ with $\mult(f)\geq 3$; whether this is
the surface or curve singularity, depends on the type.).
\end{proof}

\begin{theorem}\label{t8.4}
Also for the simple singularities $D_{2k}$
the conjectures \ref{t3.2} (a) and (b) are true.
Therefore here $M_\mu^{mar}\cong \{pt\}$, and 
the period map $M_\mu^{mar}\to D_{BL}$ is an 
isomorphism (mapping one point to one point). 
Therefore the conjectures \ref{t5.3} and \ref{t5.1} are true
(conjecture \ref{t5.4} is trivial for the simple singularities.)

In the case $mult f\geq 3$ (that is, $\mult f=3$, it is the case $n=1$)
\begin{eqnarray*}
G^{smar}&=&\{M_h^a\, |\, a=0,1,...,2k-2\}\times U,\\
G_\Z&=&G^{mar}=G^{smar}\times \{\pm\id\}
\end{eqnarray*}
with 
$U\cong S_3$ for $D_4$ and $U\cong S_2$ for $k\geq 3$.
So, $|G_\Z|=36$ for $D_4$ and $|G_\Z|=4(\mu-1)$ if $\mu=2k\geq 6$.
\end{theorem}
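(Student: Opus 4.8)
\textbf{Overview and Step 1 (the group $G_\Z(D_{2k})$).}
The plan is to first pin down $G_\Z(D_{2k})=\Aut(Ml(D_{2k}),L)$ and then read off the conjectures. Since $G_\Z$ is independent of the number of variables (Section~\ref{c2}), I work with the curve singularity $f=x^2y+y^{2k-1}$; it is quasihomogeneous with weights $w_x=\frac{k-1}{2k-1}$, $w_y=\frac{1}{2k-1}$, and $M_h$ has characteristic polynomial $(t^{2k-1}-1)(t-1)$, so the eigenvalue $1$ occurs with multiplicity $2$ and Lemma~\ref{t8.2} does not apply directly. I decompose $Ml(f)\otimes\Q=V'\oplus W$, where $W=\ker(M_h-\id)$ is the two-dimensional $1$-eigenspace (on which $M_h=\id$) and $V'$ is the sum of the other eigenspaces (all of dimension $1$, eigenvalues the $(2k-1)$-th roots of unity different from $1$). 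Any $A\in G_\Z$ commutes with $M_h$, preserves the lattice, hence preserves $V'\cap Ml$ and $W\cap Ml$, and is determined by the two restrictions. Orlik's conjecture, proved for the curve $D_{2k}$ in~\cite{MW}, gives $Ml=B_1\oplus\Z a_2$ with $B_1$ an $M_h$-cyclic sublattice of rank $2k-1$ and $M_ha_2=a_2$; then $V'\cap Ml=(M_h-\id)B_1$ is $M_h$-cyclic of rank $2k-2$, so Lemma~\ref{t8.2} applies to $(V'\cap Ml,M_h|_{V'},L|_{V'})$ and forces $A|_{V'}\in\{\pm M_h^{\,j}|_{V'}\}$, a group of order $2(2k-1)$ (note $-\id\notin\langle M_h|_{V'}\rangle$ since $2k-1$ is odd). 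On $W$ the identity $L(M_ha,b)=(-1)^{n+1}L(b,a)$ becomes $L(a,b)=(-1)^{n+1}L(b,a)$, so $L|_W$ is symmetric and $A|_W$ lies in the orthogonal group of the rank-$2$ lattice $(W\cap Ml,L|_W)$, which one identifies explicitly (from a Coxeter--Dynkin diagram, or from the Seifert form): for $D_4$ it is the $A_2$-lattice, with $|\mathrm{O}(W\cap Ml)|=12$; for $D_{2k}$ with $k\geq3$ it is $\{\pm\id\}$. Thus $|G_\Z|\leq 2(2k-1)\cdot|\mathrm{O}(W\cap Ml)|$, which already yields $|G_\Z(D_{2k})|\leq 4(\mu-1)$ for $\mu=2k\geq6$; for $D_4$ one computes the glue group $Ml/((V'\cap Ml)\oplus(W\cap Ml))$ and checks that the compatible pairs form a subgroup of order $36$.

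\textbf{Step 2 (Conjecture~\ref{t3.2}(a)).}
By Theorem~\ref{t6.2}, $R_f\cong\Stab_{G_w}(f)$. For $k\geq3$ the quasihomogeneous symmetries of $x^2y+y^{2k-1}$ are exactly the diagonal maps $(x,y)\mapsto(ax,by)$ with $b^{2k-1}=1$, $a^2b=1$ (the apparent off-diagonal freedom is killed by the cross terms of $f$); this is cyclic of order $2(2k-1)$, and since $\gcd(2,2k-1)=1$ it equals $\langle M_h\rangle\times U$, where $\langle M_h\rangle$ has order $2k-1$ (the monodromy is realized by $(x,y)\mapsto(e^{2\pi i w_x}x,e^{2\pi i w_y}y)$) and $U=\langle(x,y)\mapsto(-x,y)\rangle\cong S_2$. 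For $D_4$ one has $w_x=w_y$, so $G_w=\Gl(2,\C)$ and $\Stab_{\Gl(2,\C)}(x^2y+y^3)$ is the stabilizer of a binary cubic with three distinct roots, a central extension of $S_3$ by the order-$3$ group of scalar matrices; as $H^2(S_3,\Z/3)=0$ it splits, so $R_{D_4}\cong\langle M_h\rangle\times U$ with $\langle M_h\rangle\cong\Z/3$ (generated by the scalar $e^{2\pi i/3}\id$) and $U\cong S_3$. By Theorem~\ref{t6.1}(f),(g), $G^{smar}_\RR(f)=(R_f)_{hom}\cong R_f$, and by Theorem~\ref{t6.1}(g),(h), since $\mult f=3$, $G^{mar}_\RR(f)=G^{smar}_\RR(f)\times\{\pm\id\}$ has order $4(2k-1)$ (respectively $36$). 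Comparing with Step~1 forces $G^{mar}_\RR(f)=G_\Z(f)$, whence $G^{mar}(f)=G_\Z(f)$, which is Conjecture~\ref{t3.2}(a). By Theorem~\ref{t4.4}(a), $M_\mu^{mar}(f)$ is connected, and since it is locally isomorphic to the $0$-dimensional $\mu$-constant stratum of $D_{2k}$, $M_\mu^{mar}(f)\cong\{\mathrm{pt}\}$.

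\textbf{Step 3 (Conjecture~\ref{t3.2}(b), the structure of $G^{smar}$, the Torelli corollaries).}
The curve $D_{2k}$ satisfies assumption~\eqref{4.1} (for $n=1$ the topological type is constant in a $\mu$-homotopy class, and every $D_{2k}$-curve has multiplicity $3$), while its suspension satisfies~\eqref{4.2}; so by the last part of Theorem~\ref{t4.4}(e), $\{\pm\id\}$ acts freely on $M_\mu^{smar}(D_{2k})$ and $M_\mu^{smar}(D_{2k})\to M_\mu^{mar}(D_{2k})=\{\mathrm{pt}\}$ is a double covering. Hence $M_\mu^{smar}(D_{2k})$ consists of two points interchanged by $-\id$, and by Theorem~\ref{t4.4}(b) $G^{smar}(D_{2k})$ is the stabilizer of one of them, an index-$2$ subgroup of $G_\Z$ not containing $-\id$: this is Conjecture~\ref{t3.2}(b). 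Since $G^{smar}_\RR(f)$ is also an index-$2$ subgroup of $G_\Z$ not containing $-\id$ (Step~2) and $G^{smar}_\RR(f)\subseteq G^{smar}(f)$, the two coincide; therefore $G^{smar}(D_{2k})=G^{smar}_\RR(D_{2k})=\langle M_h\rangle\times U$ and $G_\Z=G^{mar}=G^{smar}\times\{\pm\id\}$, with the stated orders $36$ for $D_4$ and $4(\mu-1)$ for $\mu=2k\geq6$. Finally, $M_\mu^{mar}\cong\{\mathrm{pt}\}$ makes $BL\colon M_\mu^{mar}\to D_{BL}$ injective onto a point, so Conjecture~\ref{t5.3} holds; Conjecture~\ref{t5.1} follows from $G^{mar}_\RR(f)=G_\Z(f)$ and the general inclusions $G^{mar}_\RR(f)\subseteq\Stab_{G_\Z(f)}(H_0''(f))\subseteq G_\Z(f)$ of Section~\ref{c5}; and Conjecture~\ref{t5.4} is trivial for the simple singularities.

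\textbf{Main obstacle.}
The delicate part is Step~1 for $D_4$: identifying the rank-$2$ Seifert lattice on the $1$-eigenspace as the $A_2$-lattice and computing its glue with the cyclic part precisely enough to conclude $|G_\Z(D_4)|=36$ rather than merely $|G_\Z(D_4)|\in\{36,72\}$ --- the surplus factor $2$ is exactly the a priori possibility of decoupling the triality-type $S_3$-symmetry of the $1$-eigenspace from the monodromy part, which the gluing must forbid. For $k\geq3$ this difficulty disappears, since the orthogonal group of the $1$-eigenspace lattice is already $\{\pm\id\}$ and the upper bound from Step~1 meets the lower bound $|G^{mar}_\RR(f)|=4(\mu-1)$ from Step~2.
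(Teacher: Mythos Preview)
Your overall strategy is close to the paper's, but Step~1 contains an error that breaks the argument for every $k\geq 3$, not only for $D_4$. The rank-$2$ lattice $(W\cap Ml,L|_W)$ for the curve is, up to sign, the lattice with Gram matrix $\left(\begin{smallmatrix}2&1\\1&k\end{smallmatrix}\right)$ (equivalently: under three suspensions one has $L(n{=}1)=L(n{=}4)$ and $I=-2L$ on the $-1$-eigenspace for $n=4$, so your lattice coincides with the paper's $(B_1,I)$). For $k\geq 3$ its orthogonal group has order $4$, not $2$: besides $\pm\id$ there is $b_1\mapsto b_1$, $b_2\mapsto -b_2+b_1$ and its negative. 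Hence your product bound is $2(2k-1)\cdot 4=8(\mu-1)$, which does not meet the lower bound $|G^{mar}_\RR(f)|=4(\mu-1)$ from Step~2. The ``glue'' obstruction you invoke only for $D_4$ is in fact needed for every $k$: the sublattice $(V'\cap Ml)\oplus(W\cap Ml)$ has index $2k-1$ in $Ml$, and the compatibility on the glue group $\Z/(2k-1)$ kills exactly the extra factor $2$.

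The paper avoids this difficulty by organising the count differently. It passes to $n=4$, where $(Ml,I)$ is the $D_{2k}$ root lattice, and uses the exact sequence $1\to\{\psi\in G_\Z:\psi|_{B_1}=\id\}\to G_\Z\to\Aut(B_1,I)$ with $B_1=\ker(M_h+\id)$. The kernel is analysed by applying Lemma~\ref{t8.2} to the \emph{full} cyclic summand $B_2$ of rank $\mu-1$ from Orlik's decomposition (char.\ polynomial $t^{\mu-1}+1$): one gets $\psi|_{B_2}=\pm M_h^k|_{B_2}$, and the condition $\psi|_{B_1}=\id$ then forces $\psi=(-M_h)^k$, so $|\ker|=2k-1$. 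Multiplying by $|\Aut(B_1,I)|$ (which is $12$ for $k=2$ and $4$ for $k\geq 3$) gives $|G_\Z|\leq 4(\mu-1)$ directly, matching $|G^{mar}_\RR|$. In short, by keeping the $(-1)$-eigenvalue inside the cyclic piece the paper gets the gluing constraint for free; your splitting into $V'$ and $W$ loses exactly that information and must recover it by a separate glue computation for every $k$.
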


\begin{proof}
Consider the curve singularity $f=x^{2k-1}-xy^2$ with weights
$(w_x,w_y)=(\frac{1}{2k-1},\frac{k-1}{2k-1})$. Because of $\mmod(f)=0$
and theorem \ref{t4.4} (b), 
$M_\mu^{smar}$ consists of $|G_\Z/G^{smar}|$ many points, and 
$G^{smar}=\Stab_{G_\Z}([(f,\id)])$. By theorem \ref{t6.2}
and theorem \ref{t6.1} (g), this group can be calculated:
The restriction of the homomorphism $()_{hom}:\RR^f\to \Stab_{G_\Z}([(f,\id)])$
to the finite group $\Stab_{G_w}(f)\subset \RR^f$ with 
$$\Stab_{G_w}(f):=\{\varphi\in\RR^f\, |\,  \varphi\textup{ is a quasihomogeneous
coordinate change}\}$$
is an isomorphism $()_{hom}:\Stab_{G_w}(f)\to \Stab_{G_\Z}([(f,\id)])$.

It is easy to see that $\Stab_{G_w}(f)$ is generated by the coordinate changes
\begin{eqnarray*}
\varphi_1:(x,y)&\mapsto& (e(w_x)x,e(w_y)y)\qquad \textup{ with }(\varphi_1)_{hom}=M_h,\\
\varphi_2:(x,y)&\mapsto& (x,-y),\\
\varphi_3:(x,y)&\mapsto& (-\frac{1}{2}x+\frac{1}{2}y,\frac{3}{2}x+\frac{1}{2}y)
\quad\textup{ only for }k=2.
\end{eqnarray*}
The element $\varphi_2$ [and $\varphi_3$ if $k=2$] 
generates a subgroup of $G_w$ isomorphic to $S_2$
[respectively $S_3$ if $k=2$].
The image under $()_{hom}$ is called $U$. Thus
$$G^{smar}=\Stab_{G_\Z}([(f,\id)])=\{M_h^k\, |\, k=0,1,...,2k-2\}\times U.$$
By theorem \ref{t3.3} (g) $-\id\notin \Stab_{G_\Z}([(f,\id)])$,
therefore $G^{mar}=G^{smar}\times\{\pm\id\}$
and $|G^{mar}|=36$ for $D_4$ and $|G^{mar}|=4(\mu-1)$ for $k\geq 3$.
It rests to see $|G_\Z|=|G^{mar}|$. Then $G_\Z=G^{mar}$, and everything is proved.

For the calculation of $|G_\Z|$ we go over to $\www f=f(x,y)+x_2^2+x_3^2+x_4^2$ with 
$n=4$. It is well known that then $(Ml(\www f),I)$ is the root lattice of type
$D_{2k}$ and $M_h$ is a Coxeter element. Then $G_\Z=\Aut(Ml(f),M_h,I)$ because
$I$ is nondegenerate.

Choose a basis $e_1,...,e_\mu$ of $Ml(\www f)$ which corresponds to the standard
Dynkin diagram,
\begin{eqnarray*}
&& I(e_i,e_i)=2,\quad I(e_i,e_{i+1})=-1\textup{ for }1\leq i\leq \mu-2,\quad
I(e_{\mu-2},e_\mu)=-1.\\
&& I(e_i,e_j)=0\quad \textup{ for all other }i\textup{ and }j\textup{ with }i<j.
\end{eqnarray*}
Then
\begin{eqnarray*}
M_h &=& s_{e_1}\circ s_{e_2}\circ ...\circ s_{e_\mu},\\
B_1&:=&\ker(M_h+\id)\cap Ml(\www f) = \Z\cdot b_1 + \Z\cdot b_2,\quad where\\
 b_1&=&e_{\mu-1}-e_\mu,\quad b_2=e_{\mu-1}+e_{\mu-3}+...+e_1,\\
&& I(b_1,b_1)=4,\quad I(b_1,b_2)=2,\quad I(b_2,b_2)=2k.
\end{eqnarray*}
If $k=2$ this is an $A_2$-lattice.
If $k\geq 3$, $I(\beta,\beta)=4\iff \beta=\pm b_1$ for $\beta\in B_1$.
One sees easily $|\Aut(B_1,I)|=12$ if $k=2$ and 
$|\Aut(B_1,I)|=4$ if $k\geq 3$.

Any $\psi\in G_\Z$ maps $B_1$ to itself because $\psi\circ M_h=M_h\circ\psi$.
There is an exact sequence
$$1\mapsto \{\psi\in G_\Z\, |\, \psi=\id\textup{ on }B_1\}
\to G_\Z\to \Aut(B_1,I).$$
Orlik's conjecture holds also for $D_{2k}$ (see the proof of theorem \ref{t8.3} (b)),
with $p_{ch}=(t^{\mu-1}+1)\Phi_2 = p_1\cdot p_2$, $p_1=t^{\mu-1}+1$, $p_2=\Phi_2$:
There are $a_1\in Ml(\www f)$, $a_2\in B_1$ with 
$$Ml(\www f)= \left(\bigoplus_{i=0}^{\mu-2}\Z\cdot M_h^i(a_1)\right) \oplus \Z\cdot a_2
=: B_2\oplus \Z\cdot a_2.$$
Any $\psi\in G_\Z$ with $\psi =\id$ on $B_1$ restricts to an 
automorphism of $B_2$. Lemma \ref{t8.2} applies and shows
$\psi_{|B_2}= \pm (M_{h|B_2})^k$ for some $k$. Now $\psi_{|B_1}=\id$ forces
$\psi=(-M_h)^k$ for some $k\in\Z$ (here $M_h$ is the mondromy of $\www f$,
so minus the monodromy of $f$). Now
\begin{eqnarray*}
|G_\Z| &=& |\{(-M_h)^k\, |\, k\in\Z\}|\cdot |\textup{image of }G_\Z\textup{ in }
\Aut(B_1,I)|\\
&\leq&\left\{ \begin{matrix} 3\times 12 = |G^{mar}| & \textup{ if }k=2,\\
(2k-1)\cdot 4 =|G^{mar}| & \textup{ if }k\geq 3.\end{matrix}\right.
\end{eqnarray*}
Therefore
$G_\Z=G^{mar}$. And $G_\Z\to \Aut(B_1,I)$ is surjective.
\end{proof}

\begin{remarks}\label{t8.5}
(i) Let $f$ be any quasihomogeneous singularity such that its Milnor lattice and
its monodromy satisfy the properties (i) -- (iii) in lemma \ref{t8.2}.
Property (i) implies that there are no nontrivial $\mu$-constant deformations
of weight $0$. Therefore a holomorphic $\mu$-constant family with base space 
$X\cong \C^{\textup{mod}(f)}$ and good $\C^*$-action on it exists,
where the point $0$ stands for the quasihomogeneous singularity and all other
points for semiquasihomogeneous singularities, and it contains representatives
of any right equivalence class in the $\mu$-homotopy class of $f$ \cite{AGV1}.
Then the proof of theorem \ref{t8.3} (c) goes through without changes.
Then $M_\mu^{mar}\cong X\cong \C^{\textup{mod}(f)}$, and the period map
$M_\mu^{mar}\to D_{BL}$ is an isomorphism.

In \cite[ch. 6]{He3} most of this (only not conjecture \ref{t3.2} (b) for $G^{smar}$)
was carried out for the Brieskorn-Pham singularities with pairwise
coprime exponents.

(ii) In theorem \ref{t8.3} the quasihomogeneous singularities
$Z_{12}$, $Q_{12}$, $U_{12}$, $Z_{18}$, $Q_{16}$, $U_{16}$ are missing.
Also they have holomorphic $\mu$-constant families with base spaces $X\cong\C^{\mmod(f)}$
as in (i). A part of the proof of theorem \ref{t8.3} applies and shows
that each component of $M_\mu^{mar}$ is isomorphic to $X$, and it shows that
in the case $\mult(f)\geq 3$ conjecture \ref{t3.2} (b) is true.
The only missing part is a proof of conjecture \ref{t3.2} (a): $G^{mar}=G_\Z$.
This will be shown in another paper.
Then the proof of theorem \ref{t8.3} goes through and gives everything else.
Then the period map $M_\mu^{mar}\to D_{BL}$ is an isomorphism.
%

(iii) For the simple singularities $G_\Z$ had also been calculated in \cite{Yu1}\cite{Yu2}.
There specific properties of the ADE root lattices were used.
Also $G_\Z=G^{mar}$ is contained implicitly in \cite{Yu1}\cite{Yu2}.

(iv) In the case of the simple and the exceptional singularities, $M_\mu^{mar}$
[respectively any component of $M_\mu^{mar}$ for the cases 
$Z_{12}$, $Q_{12}$, $U_{12}$, $Z_{18}$, $Q_{16}$, $U_{16}$] is simply connected.
A discussion in another paper will show that this holds also for the
simple-elliptic singularities $\www E_k$ ($k=6,7,8$) and the hyperbolic
singularities $T_{pqr}$ ($\frac{1}{p}+\frac{1}{q}+\frac{1}{r}<1)$.

But I expect that it does not hold for the six bimodal quadrangle singularities.
In those cases $D_{BL}$ is a line bundle over the hyperbolic plane $\H$, and
the image of the period map $M_\mu^{mar}\to D_{BL}$ is the restriction of the line bundle
to the complement in $\H$ of the countably many elliptic fixed points of a certain
triangle group  with angles $>0$ \cite{He1}.
\end{remarks}

\end{document}